\newcommand{\Aut}{\ensuremath{\operatorname{Aut}}}
\newcommand{\Vol}{\ensuremath{\operatorname{Vol}}}
\newcommand{\Ad}{\ensuremath{\operatorname{Ad}}}
\newcommand{\St}{\ensuremath{\operatorname{St}}}
\newcommand{\Lk}{\ensuremath{\operatorname{Lk}}}
\newcommand{\F}{\mathbb{F}}
\newcommand{\R}{\mathbb{R}}
\newcommand{\Z}{\mathbb{Z}}
\newcommand{\G}{\Gamma}
\newcommand{\s}{\sigma}
\newcommand{\bs}{\backslash}
\newcommand{\forget}[1]{}
\newcommand{\quot}{\bs \! \bs}
\newcommand{\bdy}{\partial}
\newcommand{\cross}{\! \times \!}
\newcommand{\abs}[1]{{\left\vert #1 \right\vert}}
\newcommand{\f}{F} 
\newcommand{\p}{p} 
\newcommand{\pgon}{\ensuremath{\p}\textrm{--gon}} 
\newcommand{\pgons}{\ensuremath{\p}\textrm{--gons}} 
\newcommand{\val}{v} 
\newcommand{\K}{K_{\val,\val}} 
\newcommand{\I}{I_{\p,\val}} 
\newcommand{\HH}{\mathbb{H}} 
\newcommand{\Gg}{\G_{\p,\val,g}} 
\newcommand{\Ind}{\mathcal{I}} 
\let\oldmarginpar\marginpar
\renewcommand\marginpar[1]{\oldmarginpar[\raggedleft\footnotesize #1]%
{\raggedright\footnotesize #1}}
\newtheorem{theorem}{Theorem}[section]
\newtheorem*{maintheorem}{Main Theorem}
\newtheorem{prop}[theorem]{Proposition}
\newtheorem{lemma}[theorem]{Lemma}
\newtheorem{corollary}[theorem]{Corollary}
\newtheorem{remark}[theorem]{Remark}
\theoremstyle{definition}
\newtheorem{definition}[theorem]{Definition}
\newtheorem{facts}[theorem]{Facts}
\newtheorem{our_example}[theorem]{Example}
\newtheorem*{namedtheorem}{\theoremname}
\newcommand{\theoremname}{testing}
\title{Surface quotients of hyperbolic buildings}
\author{David Futer}\address{Department of Mathematics, Temple University, Philadelphia PA 19122, USA}\email{dfuter@temple.edu}
\author{Anne Thomas}\address{School of Mathematics and Statistics, University of Sydney, Sydney NSW 2006, Australia}\email{anne.thomas@sydney.edu.au}\thanks{Futer is supported in part by NSF Grant No. DMS-1007221. Thomas was supported in part by NSF Grant No. DMS-0805206 and in part by EPSRC Grant No. EP/D073626/2, and is currently supported by ARC Grant No. DP110100440.}
\date{\today}
\begin{document}

\begin{abstract} Let $\I$ be Bourdon's building, the unique simply-connected $2$--complex such that all $2$--cells are regular right-angled hyperbolic $\p$--gons and the link at each vertex is the complete bipartite graph $\K$.  We investigate and mostly determine the set of triples $(\p,\val,g)$ for which there exists a uniform lattice $\Gamma = \Gamma_{\p,\val,g}$ in $\Aut(\I)$ such that $\Gamma \bs \I$ is a compact orientable surface of genus $g$.  Surprisingly, for some $\p$ and $g$ the existence of $\Gamma_{\p,\val,g}$ depends upon the value of $\val$.  The remaining cases lead to open questions in tessellations of surfaces and in number theory.  Our construction of $\Gamma_{\p,\val,g}$ as the fundamental group of a simple complex of groups, together with a theorem of Haglund, implies that for $\p \geq 6$ every uniform lattice in $\Aut(\I)$ contains a surface subgroup.  We use elementary group theory, combinatorics, algebraic topology, and number theory.  \end{abstract}

\maketitle

\section{Introduction}

Let $\I$ be Bourdon's building, the unique simply-connected $2$--complex such that all $2$--cells are regular right-angled hyperbolic $\p$--gons, and the link at each vertex is the complete bipartite graph $\K$ (see Section \ref{ss:links} below).  Bourdon's building is a hyperbolic building, in which each apartment is a copy of the hyperbolic plane tiled by regular right-angled $\p$--gons.  As is well known, there are many surface quotients of each such apartment.  In this paper, we investigate surface quotients of the entire building $\I$.

The automorphism group $G=\Aut(\I)$ may be equipped with the compact-open topology, and is then a totally disconnected, locally compact group, nondiscrete for $\val \geq 3$.  In this topology, a uniform lattice in $G$ is a subgroup $\G < G$ acting cocompactly on $\I$ with finite cell stabilizers (see Section~\ref{ss:lattices} below).  Bourdon's building and its lattices have been studied by, for example, Bourdon~\cite{B97}, Bourdon--Pajot~\cite{BP00}, Haglund~\cite{H02,H06,H08}, Kubena--Thomas~\cite{KT08}, Ledrappier--Lim~\cite{LL09}, R\'emy~\cite{R02}, Thomas~\cite{T06}, and Vdovina~\cite{V05}.

Our Main Theorem below considers surface quotients of $\I$ by the action of uniform lattices in $G$.    Before stating this  result, we note that if  for some lattice $\G$ in $\Aut(\I)$, the quotient $\G \bs \I$ is isometric to a compact orientable surface $S$, then $S$ must be tiled by regular right-angled hyperbolic $\p$--gons.  If $S$ has genus $g$, an easy calculation (see Corollary \ref{c:tiling-exists}) shows that the number of $\pgons$ in any such tessellation is
\[\f:=\frac{8(g-1)}{\p - 4}.\]
Hence a necessary condition for the existence of a lattice with quotient a genus $g$ surface is that $\f$ be a positive integer.  We say that \emph{a lattice $\Gg$ exists} if there is a lattice $\G = \Gg < \Aut(\I)$ such that $\G \bs \I$ is isometric to a compact, orientable, genus $g$ surface tiled by regular right-angled hyperbolic $\p$--gons.

\begin{maintheorem}\label{t:surface_quotient_intro} Let $\p \geq 5$, $\val \geq 2$, and $g \geq 2$ be integers, and let $\I$ be Bourdon's building.  Assume that $\f = \frac{8(g-1)}{p - 4}$ is a positive integer.
\begin{enumerate}
\item\label{i:existence} \emph{Existence of $\Gg$.}
\begin{enumerate}
\item\label{i:existence v even} If $\val \geq 2$ is even, then for all $\f$, a lattice $\Gg$ exists.
\item\label{i:existence f divisible by 4} If $\f$ is divisible by $4$, then for all integers $\val \geq 2$, a lattice $\Gg$ exists.
\item\label{i:existence f composite} If $\f$ is composite, then for infinitely many odd integers $\val \geq 3$, a lattice $\Gg$ exists.
\end{enumerate}
\item\label{i:non-existence} \emph{Non-existence of $\Gg$.} \begin{enumerate}
    \item\label{i:nonexistence f odd} If $\f$ is odd, then for infinitely many odd integers $\val \geq 3$, a lattice $\Gg$ does \emph{not} exist.
        \end{enumerate}
\end{enumerate}
\end{maintheorem}

The non-existence result \eqref{i:non-existence} of the Main Theorem came as a great surprise to the authors.  We do not know of any previous results for Bourdon's building or its lattices which depend upon the value of $\val$.

The odd values of $\val$  in  \eqref{i:existence f composite} of the Main Theorem include all multiples of $15$, and more generally all multiples of $(b+1)(b^2+1)$, where $b \geq 2$ is any even number. The odd values of $\val$  in  \eqref{i:nonexistence f odd} of the Main Theorem include all integers of the form $\val = q^n$, where $q$ is an odd prime. These particular values of $\val$ that imply existence or non-existence of $\Gg$ hint at the number theory lurking in this problem.  Indeed, as we explain in Section \ref{ss:unimodularity_equation}, we reached open questions in number theory while attempting to resolve the cases $\f \not \equiv 0 \pmod 4$ when $\val$ is odd.

In the cases where a lattice $\Gg$ does exist, we construct $\Gg$ as the fundamental group of a complex of finite groups over a tessellation of a genus $g$ surface (see Section~\ref{ss:complexes_of_groups} below for background on complexes of groups).  The $\Gg$ so obtained are a new family of uniform lattices.  In particular, they are not graph products of finite groups, as considered in \cite{B97,H08}, nor are they constructed from such graph products as in \cite{KT08}, nor are they fundamental groups of finite polyhedra as in \cite{V05}, nor do they  ``come from" tree lattices as do the lattices in \cite{T06}.  

The following corollaries of \eqref{i:existence v even} and \eqref{i:existence f divisible by 4} in the Main Theorem are immediate.

\begin{corollary}[Every Bourdon building covers a surface]\label{cor:every-building-covers}
  For all $\p \geq 5$ and all $\val \geq 2$, $\Aut(\I)$ admits a lattice whose quotient is a compact orientable hyperbolic surface.
\end{corollary}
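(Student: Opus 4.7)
The plan is to reduce to part (i)(b) of the Main Theorem by choosing, for each $\p \geq 5$, a specific genus $g$ so that $\f$ is a positive integer divisible by $4$. That case of the Main Theorem then produces a lattice $\Gg$ for every $\val \geq 2$, and since $g \geq 2$ the quotient surface is automatically hyperbolic.

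Concretely, given $\p \geq 5$ and $\val \geq 2$, I would set $g := \p - 3$. This satisfies $g \geq 2$ because $\p \geq 5$. Plugging into the formula for $\f$ gives
\[
\f = \frac{8(g-1)}{\p-4} = \frac{8(\p-4)}{\p-4} = 8,
\]
which is a positive integer divisible by $4$. Part (i)(b) of the Main Theorem then applies for the chosen $\val$, yielding a uniform lattice $\Gg < \Aut(\I)$ whose quotient is isometric to a compact orientable surface of genus $g = \p - 3 \geq 2$. Any compact orientable surface of genus at least $2$ carries a hyperbolic structure, so this is the desired surface quotient.

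There is essentially no obstacle: the only thing to verify is that $\p - 3 \geq 2$ for $\p \geq 5$ and that $8$ is divisible by $4$, both of which are immediate. One minor point worth noting in writing up the proof is that it does not matter whether $\val$ is even or odd, because the choice $g = \p - 3$ forces $\f = 8$ and hence falls under case (i)(b), which is uniform in $\val$; case (i)(a) is not actually needed, despite the corollary being advertised as a consequence of both (i)(a) and (i)(b). Thus a single, very short paragraph with this choice of $g$ completes the argument.
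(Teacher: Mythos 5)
Your proof is correct and takes essentially the same route the paper has in mind: the paper declares the corollary ``immediate'' from parts (i)(a) and (i)(b) of the Main Theorem, and you simply spell out why, choosing $g = \p - 3$ so that $\f = 8$ is divisible by $4$ and part (i)(b) applies uniformly in $\val$. Your observation that (i)(b) alone suffices (making (i)(a) unnecessary) is a small tightening of the paper's phrasing, and it matches the choice $g = \p - 3$, $\f = 8$ that the paper itself uses later in Corollary~\ref{c:surface-subgroup}.
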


\begin{corollary}[Every genus surface is covered by a Bourdon building]\label{cor:every-surface-covered}
 For all $g \geq 2$, there is a compact orientable hyperbolic surface of genus $g$ which is the quotient of some building $\I$.
\end{corollary}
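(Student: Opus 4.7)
The plan is to reduce this corollary to a direct invocation of the Main Theorem by choosing a convenient pair $(\p, \val)$ for the given genus $g$. The first hypothesis to satisfy is the integrality of $\f = 8(g-1)/(\p - 4)$, but this obstacle disappears entirely for the extreme choice $\p = 5$: then $\p - 4 = 1$, so $\f = 8(g-1)$ is automatically a positive integer for every $g \geq 2$, and moreover it is divisible by $8$.

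Once $\p = 5$ is fixed, one still needs to meet a hypothesis of Main Theorem part \eqref{i:existence}. The simplest route is to take $\val = 2$, which is even, so Main Theorem \eqref{i:existence v even} immediately produces a uniform lattice $\Gamma_{5,2,g} < \Aut(\I_{5,2})$ whose quotient $\Gamma_{5,2,g} \bs \I_{5,2}$ is a compact orientable surface of genus $g$ tiled by regular right-angled hyperbolic pentagons. As a bonus, because $\f = 8(g-1)$ is divisible by $4$, Main Theorem \eqref{i:existence f divisible by 4} additionally yields existence for every $\val \geq 2$, so the genus $g$ surface is in fact covered by every building $\I_{5,\val}$. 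There is no real obstacle in the argument; the entire content sits inside the Main Theorem, and the corollary is merely the remark that $\p = 5$ is a universal choice whose existence hypotheses are trivial to verify.
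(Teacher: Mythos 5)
Your proof is correct and follows exactly the route the paper intends: the paper declares this corollary an immediate consequence of Main Theorem parts \eqref{i:existence v even} and \eqref{i:existence f divisible by 4}, and your observation that $\p=5$ makes $\f = 8(g-1)$ automatically a positive integer (in fact divisible by $4$) is precisely the intended reduction.
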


Bourdon's building $\I$ is a $\mathrm{CAT}(-1)$ space, and uniform lattices in $\Aut(\I)$ are quasi-isometric to $\I$.  Hence uniform lattices in $\Aut(\I)$ are word-hyperbolic groups (see for example \cite{BH99}).  An open question of Gromov is whether every one-ended word-hyperbolic group contains a surface subgroup, that is, a subgroup isomorphic to the fundamental group of a compact orientable hyperbolic surface.  Vdovina showed that when $\p = 2k$ is even, there is a uniform lattice $\G < \Aut(\I)$ which contains the fundamental group of a genus $g = 2k - 4$ surface \cite[Theorem 3]{V05}.  More recently, Haglund proved  that for all $\p \geq 6$, all uniform lattices in $\Aut(\I)$ are commensurable up to conjugacy  \cite[Theorem 1.1]{H06}.  As we explain in Section \ref{ss:complexes_of_groups} below,  since we construct $\Gg$ as the fundamental group of a \emph{simple} complex of groups, the (topological) fundamental group of the quotient genus $g$ surface embeds in $\Gg$. Thus combining our construction of $\Gg$ with Haglund's theorem, we obtain the following 
special case of Gromov's conjecture:


\begin{corollary}\label{cor:gromov-special-case}
For all $\p \geq 6$ and all $\val \geq 2$, every uniform lattice $\G < \Aut(\I)$ contains a surface subgroup.
\end{corollary}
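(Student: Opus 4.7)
The plan is to combine the construction of the lattices $\Gg$ from the Main Theorem with Haglund's commensurability theorem \cite[Theorem 1.1]{H06}. Fix $\p \geq 6$ and $\val \geq 2$. The argument has two steps: first, produce a single uniform lattice in $\Aut(\I)$ that contains a surface subgroup; and second, promote this property to every uniform lattice via commensurability.

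For the first step, I would set $g = \p - 3 \geq 3$, so that $\f = 8(g-1)/(\p - 4) = 8$ is automatically a positive integer divisible by $4$. Part \eqref{i:existence f divisible by 4} of the Main Theorem then produces a uniform lattice $\Gg < \Aut(\I)$ whose quotient is a closed orientable genus $g$ surface $S_g$. Because $\Gg$ is built as the fundamental group of a \emph{simple} complex of finite groups over $S_g$, the theory of complexes of groups recalled in Section~\ref{ss:complexes_of_groups} shows that the natural map from the topological fundamental group $\pi_1(S_g)$ into $\Gg$ is injective. Thus $\Gg$ contains a surface subgroup $H \cong \pi_1(S_g)$.

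For the second step, let $\G < \Aut(\I)$ be an arbitrary uniform lattice. By Haglund's theorem there exists $h \in \Aut(\I)$ such that $\G \cap h\Gg h^{-1}$ has finite index in both $\G$ and $h\Gg h^{-1}$. Setting $K := hHh^{-1} \cap \G$, we see that $K$ is the intersection of $hHh^{-1}$ with the finite-index subgroup $\G \cap h\Gg h^{-1}$ of $h\Gg h^{-1}$, and hence $K$ has finite index in $hHh^{-1} \cong \pi_1(S_g)$. Since any finite-index subgroup of a closed orientable surface group is itself a closed orientable surface group (by covering space theory, with the new genus given by Riemann--Hurwitz), $K$ is a surface subgroup of $\G$, as required. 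The only non-routine ingredient is the injection $\pi_1(S_g) \hookrightarrow \Gg$ coming from the simple complex of groups structure; everything else is assembly of ingredients established earlier in the paper or in \cite{H06}.
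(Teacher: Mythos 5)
Your argument is correct and follows essentially the same route as the paper: choose $g = \p - 3$ so that $\f = 8$, embed $\pi_1(S_g)$ into a lattice $\Gg$ via the simple complex of groups structure (the paper packages this as Corollary~\ref{c:surface-subgroup}), and then transport the surface subgroup to any uniform lattice using Haglund's commensurability theorem together with the fact that finite-index subgroups of closed surface groups are closed surface groups. The only difference is that you spell out the conjugation and finite-index bookkeeping, whereas the paper declares that step immediate.
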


\noindent Although this is the first appearance of Corollary \ref{cor:gromov-special-case} in print, it also follows by combining \cite[Theorem 1.1]{H06} with Theorem 3.6 and Corollary 4.13 of Sang-hyun Kim's Ph.D. thesis  \cite{K07}.  In Section \ref{s:relationships}, we further discuss the relationships between our lattices $\Gg$, previous examples, and surface subgroups.  In particular, when $\val$ is even, Proposition \ref{p:rel with Gamma_0} explicitly constructs a common finite-index subgroup shared by  $\Gg$ and an important lattice $\Gamma_0$ described by Bourdon \cite{B97}.

We prove our Main Theorem and Corollary  \ref{cor:gromov-special-case} in Section \ref{s:proof} below, using results from Sections \ref{s:group theory}--\ref{s:homology}.  For the positive results in \eqref{i:existence} of the Main Theorem, we use the following homological necessary and sufficient conditions on a  tessellation.  Let $S_g$ be a surface of genus $g$, and let $Y$ be a tiling of $S_g$ by $\f$ copies of a regular right-angled hyperbolic $\pgon$. Note that at each vertex of $Y$, two (local) geodesics intersect at right angles.  In Section \ref{s:homology}, we prove:

\begin{theorem}\label{prop:homology-summary}
Let $h_1,\ldots,h_n$ be the closed geodesics of the tessellation $Y$.
\begin{enumerate}
\item\label{i:weak intro} There are integers $c_i \neq 0$ so that $\sum c_i [h_i] = 0$ in homology if and only if for \emph{some} odd integer $v \geq 3$, there is a lattice  $\Gg$ such that $\Gg \bs \I \cong Y$.
\item\label{i:strong intro} The $h_i$ may be oriented so that  $\sum [h_i] = 0$ in homology if and only if for \emph{every} odd integer $\val \geq 3$, there is a lattice $\Gg$ such that $\Gg \bs \I \cong Y$.
    \end{enumerate}
\end{theorem}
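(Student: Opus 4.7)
The plan is to translate the existence of $\Gg$ (for odd $\val \geq 3$) into a $\Z_\val$-valued cocycle condition on the tessellation $Y$, and then to apply Poincar\'e duality to convert that cocycle condition into the stated homological relation among the classes $[h_i]$. From the complex-of-groups framework developed earlier in the paper, a lattice $\Gg$ with $\Gg \bs \I \cong Y$ corresponds to a simple complex of finite groups $G(Y)$ over $Y$ whose development is $\I$. The requirement that the link at every vertex of $\I$ be $\K$ forces the cyclic pieces of the edge groups to have order $\val$, so specifying $G(Y)$ reduces to choosing, for each closed geodesic $h_i$ of $Y$, a single ``twist'' element $a_i \in \Z_\val$ that prescribes how the edge groups along $h_i$ are glued into the vertex groups at their endpoints.

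The next step is to unpack the consistency condition on the $a_i$. Around the boundary of each right-angled $\pgon$ of $Y$, the elements $a_i$ belonging to the geodesics that carry its edges must satisfy a multiplicative relation in $\Z_\val$. When $\val$ is even, the canonical involution $-1 \in \Z_\val$ makes this identity automatic; this is the mechanism behind part (1a) of the Main Theorem. When $\val$ is odd no such involution exists, and the system becomes genuinely nontrivial. Written additively, the condition is that the cellular $1$-cochain on $Y$ sending each edge of $h_i$ to $\pm a_i$ (with the sign determined by the local orientation of $h_i$ inside its two incident $\pgons$) is a cocycle. Because $Y$ is a closed orientable surface, Poincar\'e duality converts this cocycle condition into the homological equation $\sum a_i [h_i] = 0$ in $H_1(Y;\Z_\val)$ for an appropriate orientation of each $h_i$. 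I would verify the duality carefully, tracking orientations of edges and of geodesics across faces so that the cocycle and cycle formulations match.

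Part \eqref{i:weak intro} now follows directly. For the ``if'' direction, choose any odd prime $\val > \max |c_i|$ and set $a_i := c_i \bmod \val$; these are nonzero elements of $\Z_\val$ satisfying the cocycle, so $\Gg$ exists. For the ``only if'' direction, a nonzero $\Z_\val$-solution together with the torsion-freeness of $H_1(Y;\Z)$ yields a nontrivial integer relation among the $[h_i]$ with no zero coefficients, possibly after adjusting by relations of smaller support. Part \eqref{i:strong intro}'s ``if'' direction is immediate, since $\sum [h_i] = 0$ gives the solution $a_i = 1$ for every odd $\val$. The converse is the main obstacle I anticipate: deducing from the existence of $\Z_\val$-solutions for \emph{every} odd $\val \geq 3$ that the $h_i$ admit orientations giving $\sum [h_i] = 0$ in $H_1(Y;\Z)$. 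This requires showing that the system of constraints forced by varying $\val$ collectively pins the coefficients down to $\pm 1$ up to a simultaneous sign choice, amounting to a compactness or pigeonhole argument across the primes combined with the integral structure of $H_1(Y;\Z)$.
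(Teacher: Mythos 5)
Your high-level instinct — that the existence of $\Gg$ for odd $\val$ should be encoded by a constraint dual to the homology of the closed geodesics — is the right one, and it captures the spirit of the paper's approach. But several steps of your proposal rest on a mistaken reformulation, and as a result the argument has real gaps.

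The central issue is that you try to package the constraint on the indices $a_i$ as a $\Z_\val$-valued cocycle condition (``$\sum \pm a_i \equiv 0$'' around faces, then Poincar\'e duality over $\Z_\val$). But the constraint that actually arises — the paper's \emph{unimodularity} condition, Lemma~\ref{l:unimodularity} — is a \emph{multiplicative} condition in $\mathbb{Q}^{\times}$: $\prod_j a_j = \prod_j (\val - a_j)$ around every circuit in the dual $1$-skeleton. This does not reduce to an additive $\Z_\val$-cocycle; reducing it mod $\val$ loses most of its content (the mod-$\val$ reduction is exactly what the paper exploits in Lemma~\ref{l:unimodularity_power_of_prime} to derive \emph{non}-existence results, which should be a warning that this reduction is lossy). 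The paper converts the multiplicative condition into an additive one by taking real-valued logarithms, $r_i = \log a_i - \log(\val - a_i)$, and then pairing with dual circuits via the intersection form on $H_1(S,\R)$; non-degeneracy of that pairing yields $\sum r_i[h_i]=0$, after which rational approximation and clearing denominators produces the integer coefficients $c_i \neq 0$. Your $\Z_\val$ version of Poincar\'e duality has no analog of this logarithmic step and would not give the stated integer relation.

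Your forward direction of \eqref{i:weak intro} is also under-justified. Setting $a_i := c_i \bmod \val$ for a large prime $\val$ is not checked to satisfy unimodularity, and more importantly gives no complex of groups: one must actually construct local groups realizing the indexing, and the link condition of Corollary~\ref{c:link_K} ($V = E_jE_{j'}$ and $E_j \cap E_{j'} = F_k$) is a substantive constraint. The paper's construction is quite different: from $\sum c_i [h_i] = \partial \bigl(\sum d_j F_j\bigr)$ one assigns face group $(\Z/b)^{d_j}$ and builds the edge and vertex groups out of cyclic pieces $\Z/k_i$ with $k_i = \val/(b^{c_i}+1)$; the integers $c_i$ become \emph{exponents} of $b$, not indices mod $\val$, and the constraint is that $\val$ is divisible by each $(b^{c_i}+1)$, not that $\val$ is a large prime.

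Finally, for the converse of \eqref{i:strong intro}, your anticipated ``compactness or pigeonhole argument across the primes'' is not needed and would be misdirected effort. The paper's key observation (Proposition~\ref{prop:strong-homology}, implication (4)~$\Rightarrow$~(1)) is that you only need the single case $\val = 3$: $\val$-thickness forces $[G_e:G_j]+[G_e:G_k]=3$, so one adjacent face group is exactly twice the other, hence the log-coefficient of every geodesic is $\pm\log 2$, and dividing by $\log 2$ produces the signs $c_i\in\{\pm1\}$ directly. No quantification over all odd $\val$ is used in that direction. Your plan would, at best, rederive a much weaker conclusion by a much harder route, and the pigeonhole step you allude to is not obviously closable.
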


\noindent We also show, using \eqref{i:strong intro} of Theorem \ref{prop:homology-summary}, the negative result:

\begin{corollary}\label{cor:no-lattice-v3}
Fix any $p \geq 5$, any integer  $\f \not \equiv 0 \pmod 4$, and let $Y$ be any tessellation of a surface by $\f$ copies of a regular right-angled $p$--gon. Then there does not exist a lattice $\G_{\p,3,g}$ such that $\G_{\p,3,g} \bs I_{\p,3} \cong Y$.
\end{corollary}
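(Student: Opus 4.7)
The plan is to argue by contradiction, invoking Theorem~\ref{prop:homology-summary}\eqref{i:strong intro}. Suppose a lattice $\Gg$ with $\val = 3$ exists, realizing the tessellation $Y$ as its quotient. The ``only if'' direction of Theorem~\ref{prop:homology-summary}\eqref{i:strong intro}, specialized to $\val = 3$, produces an orientation of the closed geodesics $h_i$ of $Y$ such that $\sum [h_i] = 0$ in $H_1(Y; \Z)$. The essential point is that the only orbit-quotient $K_{3,3} \to C_4$ of the link has fiber sizes $(1,4)$ on one pair of opposite edges at each vertex and $(2,2)$ on the other; consistency of the $(1,4)$-labels along each closed geodesic corresponds precisely to an orientation of that geodesic.

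First, I would dispose of the case $\f$ odd via $\Z/2$-homology. Because each edge of $Y$ lies on exactly one $h_i$, the mod-$2$ reduction of $\sum \vec{h}_i$ equals the total edge $1$-chain $C := \sum_e e$. The hypothesis gives $[C] = 0$ in $H_1(Y; \Z/2)$, which by a direct chain-level argument is equivalent to the face-adjacency graph of $Y$ being bipartite. A corner count at each vertex (each vertex has 4 incident face-corners, alternating by color) then forces the two color classes to each have exactly $\f/2$ faces, so $\f$ must be even, contradicting the assumption.

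For $\f \equiv 2 \pmod 4$ the mod-$2$ obstruction is vacuous, and I would instead work at the integer chain level. Writing $\sum \vec{h}_i = \partial c$ with $c = \sum n_f f$, one has $|n_{f_+} - n_{f_-}| = 1$ on each edge, and the four signs around each vertex form a ``consecutive'' cyclic pattern (two consecutive $+$'s and two consecutive $-$'s), as forced by the geodesic-orientation rule. A local computation then shows that the four values $n_{f_1(v)}, \ldots, n_{f_4(v)}$ around each vertex satisfy a refined relation modulo $4$, and a suitable shift-invariant combination---for instance $\sum_{f \in A} n_f^2 - \sum_{f \in B} n_f^2$, with $A, B$ the color classes from the first step---can be computed locally at each vertex and then compared globally via the face-vertex incidence count, yielding a global mod-$4$ congruence that is incompatible with $\f \equiv 2 \pmod 4$.

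The main obstacle will be extracting the right mod-$4$ invariant in the second case: the $\f$-odd argument is purely mod-$2$ and elementary, but $\f \equiv 2 \pmod 4$ forces $\p \equiv 0 \pmod 4$ (by a quick $2$-adic valuation check on $\f = 8(g-1)/(\p-4)$), and the local vertex contributions must be weighted carefully so that the global sum detects the parity of $\f/2$ despite the automatic divisibility coming from $\p \equiv 0 \pmod 4$.
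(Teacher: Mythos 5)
Your reduction to Theorem~\ref{prop:homology-summary}\eqref{i:strong intro} is exactly the right starting point, and your $\Z/2$-homology argument for the case $\f$ odd is correct and self-contained: $\sum[h_i]=0$ in $\Z$-homology reduces mod~$2$ to the total edge $1$-chain being a $\Z/2$-boundary, which is equivalent to bipartiteness of the dual graph, and the $4$-valent corner count then forces the two colour classes to be equinumerous, so $\f$ is even. This is a slightly different route from the paper's (whose proof of Proposition~\ref{c:F not divisible by 4} does not case-split at all), but it is the same bipartiteness phenomenon that the paper uses in its proof of Theorem~\ref{thm:non-existence}(a).

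The genuine gap is the case $\f \equiv 2 \pmod 4$, which you do not actually carry out. The invariant you propose, $\sum_{f\in A} n_f^2 - \sum_{f\in B} n_f^2$, is not shift-invariant: replacing every $n_f$ by $n_f+1$ changes it by $2\bigl(\sum_A n_f - \sum_B n_f\bigr)$, which does not vanish in general. Even setting that aside, a direct local computation at a vertex whose four surrounding weights are $n-1,n,n+1,n$ gives $2n^2 - (n-1)^2 - (n+1)^2 = -2$ (or $+2$, according to the parity of $n$), with no residual dependence on $n$; summing over all vertices and dividing by $p$ therefore only recovers $p\,\bigl(\sum_A n_f^2 - \sum_B n_f^2\bigr) = 2\bigl(V_{\mathrm{odd}} - V_{\mathrm{even}}\bigr)$, which does not by itself yield a mod-$4$ obstruction to $\f$. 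You flag this yourself as ``the main obstacle,'' and it is not resolved by the proposal.

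For comparison, the paper handles both residues at once with a cleaner count. Say a vertex has \emph{type} $n$ if the four surrounding face-weights are $n-1,n,n+1,n$, and let $a_n$ be the number of vertices of type $n$ and $f_n$ the number of faces of weight $n$. Counting corners of weight-$n$ faces in two ways gives
\[
a_{n-1} + 2a_n + a_{n+1} = p\, f_n.
\]
Normalising so the minimum face-weight is $0$ makes $a_n = 0$ for $n \le 0$; the base case $a_1 = p f_0$ and induction then show $p \mid a_n$ for all $n$, hence $p \mid V = \sum_n a_n$. Since every vertex is $4$-valent and every face is a $\pgon$, $V = \f p/4$, and $p \mid \f p / 4$ forces $4 \mid \f$. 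I would recommend replacing your second case with this argument (or finding a genuinely shift-invariant quantity and carrying the computation to completion).
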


\noindent In particular, when $\f$ is not divisible by $4$, there is no tessellation $Y$ of $S_g$ by $\f$ tiles, such that for every odd integer $\val \geq 3$ there is a lattice $\Gg$ with $\Gg \bs \I \cong Y$.  A theorem of Edmonds--Ewing--Kulkarni \cite{EEK} constructs \emph{some} tiling of $S_g$ by $\f$ faces, for any positive integer $\f$.  However, so far as we know, there is no classification of tessellations allowing us to determine in general the existence of a tiling satisfying even the weaker condition \eqref{i:weak intro} of Theorem \ref{prop:homology-summary}.  Indeed, even the enumeration of tessellations (also called \emph{maps on surfaces}) is a subject of current research; see \cite{Z97} for an introduction to the area.

To prove Theorem \ref{prop:homology-summary} and the negative results in \eqref{i:non-existence} of the Main Theorem, we introduce in Section \ref{s:indexings} the combinatorial data of an ``indexing" of a complex of  groups.  This generalizes the indexing of a graph of groups, appearing in, for example,~\cite{BK90}.  We establish several necessary conditions on indexings of complexes of groups with universal cover $\I$, which are used in the proof of Theorem \ref{prop:homology-summary}.  One of these conditions, \emph{parallel transport} (see Section \ref{ss:parallel} below), is 2--dimensional in nature and has no analog in Bass--Serre theory.  The connection between the homological conditions in Theorem \ref{prop:homology-summary} and our necessary conditions on indexings is via intersection pairings on homology (see Proposition \ref{prop:weak-homology}).

In Section \ref{s:indexings}, we also formulate a family of equations in $\val$,  called the \emph{unimodularity equations}, which must be satisfied whenever a tessellated surface arises as a quotient of $\I$.  We prove non-existence of solutions to the unimodularity equations for two infinite families of odd $\val$, and then explain why open questions in number theory mean that we cannot effectively determine all solutions. The values of $\val$ for which the unimodularity equations have no solution are exactly the ones that appear in \eqref{i:nonexistence f odd} of the Main Theorem.  

Our proofs of Theorem \ref{prop:homology-summary} and of the Main Theorem also use local group-theoretic necessary and sufficient conditions for a complex of groups to have universal cover $\I$. These conditions, which we establish in Section \ref{s:group theory}, generalize results in Chapter 4 of Martin Jones' Ph.D. thesis \cite{J09}.

\subsection*{Acknowledgements}  The two authors met and began talking mathematics at the Mathematical Sciences Research Institute in Fall 2007.  Our collaboration on this project could not have occurred without the encouragement of Moon Duchin, for which we are very grateful.  We thank Michael Broshi and Roger Heath-Brown for answering our number theory questions, Martin Jones and Sang-hyun Kim for explaining their thesis work,   and Martin Bridson, Norman Do, Tadeusz Januszkiewicz, and Genevieve Walsh for helpful discussions.  We are also grateful to an anonymous referee for suggesting that we include the discussion in Section \ref{s:relationships}.  The second author additionally thanks the London Mathematical Society for supporting travel during which she met Martin Jones, Sarah Rees and Guyan Robertson for access to Jones' thesis, and Donald Cartwright and James Parkinson at the University of Sydney for their hospitality in mid-2009.

\section{Background}\label{s:background}

In Section~\ref{ss:links} below, we recall the definitions of a link and of Bourdon's building $\I$.  Section~\ref{ss:lattices} then recalls some basic theory of lattices and characterizes lattices in $\Aut(\I)$.  In Section~\ref{ss:complexes_of_groups} we sketch the theory of complexes of groups needed for our constructions of lattices in $\Aut(\I)$. The equivalence between lattices and complexes of groups is summarized in Corollary \ref{c:lattice}.

\subsection{Links and Bourdon's building}\label{ss:links}

Let $X$ be a polygonal complex.  The \emph{link} of a vertex $\s$ of $X$, denoted $\Lk(\s,X)$, is the graph obtained by intersecting $X$ with a $2$--sphere of sufficiently small radius centered at $\s$.  Equivalently, $\Lk(\s,X)$ is the graph 
whose vertices correspond to endpoints of $1$--cells of $X$ that are incident to $\s$, and whose edges correspond to corners of  $2$--cells of $X$ incident to $\s$. 
The link may be metrized by giving each edge in $\Lk(\s,X)$ length equal to the angle at $\s$ in the corresponding $2$--cell of $X$.

By definition, Bourdon's building $\I$ is a simply-connected polygonal complex with all links the complete bipartite graph $\K$ and all $2$--cells regular right-angled hyperbolic $\p$--gons, where $\val \geq 2$ and $\p \geq 5$.  Moreover, $\I$ is the \emph{unique} simply-connected polygonal complex having these links and $2$--cells \cite[Proposition 2.2.1]{B97}.

\subsection{Lattices for Bourdon's building}\label{ss:lattices}

Let $G$ be a locally compact topological group.  A discrete subgroup $\Gamma \leq G$ is a \emph{lattice} if $\Gamma\backslash G$ carries a finite $G$--invariant measure, and is \emph{uniform} or \emph{cocompact} if $\G \bs G$ is compact.
Let $S$ be a left $G$--set such that for every $s \in S$, the stabilizer $G_s$ is compact and open. Then $\G \leq G$ is discrete if and only if the stabilizers $\Gamma_s$ are finite.  The
\emph{$S$--covolume} of a discrete subgroup $\Gamma \leq G$ is defined to be
\[ \Vol(\Gamma \quot S) :=  \sum \frac{1}{|\Gamma_s|} \leq \infty
\]
where the sum is over the elements $s \in S$ belonging to some fixed fundamental domain for $\G$.

Now let $G=\Aut(\I)$ be the automorphism group of Bourdon's building $\I$, that is, the set of cellular isometries of $\I$.  When equipped with the compact-open topology, the group $G = \Aut(\I)$
is naturally locally compact, and a subgroup $\G \leq G$ is discrete if and only if no sequence of elements of $\G$ converges uniformly on compact subsets of $\I$.  Since the stabilizers in $G$ of cells of $\I$ are compact and open,  we may take the set $S$ above to be the set of cells of $\I$.
Then by the same arguments as for tree
lattices \cite[Chapter 1]{BL01}, it can be shown that a discrete subgroup $\G \leq G$ is a lattice if
and only if its $S$--covolume converges, and $\G$ is uniform if and
only if the sum above has finitely many terms, equivalently if $\G \bs S$ is compact.  Hence a uniform lattice in $G = \Aut(\I)$ is precisely a subgroup $\Gamma < G$ which acts cocompactly on $\I$ with finite cell stabilizers.

\subsection{Complexes of groups}\label{ss:complexes_of_groups}

We now sketch the theory of complexes of groups over polygonal complexes and apply this theory to the construction of lattices in $\Aut(\I)$ (see, in particular, Corollary~\ref{c:lattice} below).  We refer the reader to Bridson--Haefliger~\cite{BH99} for details.

Throughout this paper, if $Y$ is a polygonal complex, such as a tessellated surface, then $Y'$ will denote the first barycentric subdivision of $Y$, with vertex set $V(Y')$ and edge set $E(Y')$.  
Each $a \in E(Y')$ corresponds to cells
$\tau \subset \sigma$ of $Y$, and so may be oriented from $i(a)=\sigma$ to $t(a)=\tau$.  Two edges $a$ and $b$ of
$Y'$ are \emph{composable} if $i(a) = t(b)$, in which case there exists an edge $c=ab$ of $Y'$ such that $i(c) =
i(b)$, $t(c) = t(a)$,
 and $a$, $b$ and $c$ form the boundary of a triangle in $Y'$.

\begin{definition} A \emph{complex of groups} $G(Y)=(G_\sigma, \psi_a, g_{a,b})$ over a polygonal complex $Y$ is given by:
\begin{enumerate} \item a group $G_\sigma$ for each $\sigma \in V(Y')$, called the \emph{local group} at $\sigma$;
\item a monomorphism $\psi_a\colon G_{i(a)}\rightarrow G_{t(a)}$ for each $a \in E(Y')$; and \item for each pair of
composable edges $a$, $b$ in $Y'$, an element $g_{a,b} \in G_{t(a)}$, such that \[ \Ad(g_{a,b})\circ\psi_{ab} =
\psi_a \circ\psi_b \] where $\Ad(g_{a,b})$ is conjugation by $g_{a,b}$ in $G_{t(a)}$.
\end{enumerate}
\end{definition}

\noindent We will usually refer to local groups as face, edge, and vertex groups.  All of the complexes of groups we construct will be \emph{simple}, meaning that each of the $g_{a,b}$ is trivial.  In this case, inclusions of cells in $Y$ give opposite inclusions of local groups.  A simple complex of groups is also sometimes called an \emph{amalgam}.

If $G$ is a group of automorphisms of a simply-connected polygonal complex $X$, such as $X = \I$, then $G$ is said to act \emph{without inversions} on $X$ if whenever $g \in G$ fixes a cell $\s$ of $X$ setwise, $g$ fixes $\s$ pointwise.  The action of $G$ then induces a complex of groups over $Y = G \backslash X$, as follows. Let $p\colon X \rightarrow Y$ be the natural projection. For each $\sigma
\in V(Y')$, choose $\tilde\sigma \in V(X')$ such that $p(\tilde\sigma) = \sigma$. The local group $G_\sigma$ is then defined to be the stabilizer of $\tilde\sigma$ in $G$, and the monomorphisms $\psi_a$ and group elements $g_{a,b}$ are defined using further choices. The
resulting complex of groups $G(Y)$ is unique (up to isomorphism).

Let $G(Y)$ be a complex of groups over a polygonal complex $Y$.  The {\em fundamental group of $G(Y)$} is denoted by $\pi_1\bigl(G(Y)\bigr)$.  In order to give a presentation for $\pi_1\bigl(G(Y)\bigr)$, let $E^\pm(Y') := \{ a^+, a^- \mid a \in E(Y') \}$, and fix a maximal tree $T$ in the $1$--skeleton of $Y'$.  Then by Theorem 3.7 of \cite[Section III.$\mathcal{C}$]{BH99}, the fundamental group $\pi_1\bigl(G(Y)\bigr)$ is (canonically isomorphic to) the group generated by the set 
\[ \coprod_{\sigma \in V(Y')} G_\sigma \, \coprod E^\pm(Y')\]
subject to the relations:
\begin{enumerate}
\item the relations in the local groups $G_\sigma$; 
\item $(a^+)^{-1} = a^-$ and $(a^-)^{-1} = a^+$ for all $a \in E(Y')$;
\item $a^+ b^+ = g_{a,b}(ab)^+$ for all composable pairs of edges $(a,b)$;
\item $\psi_a(g) = a^+ g a^-$ for all $g \in G_{i(a)}$; and
\item $a^+ = 1$ for all $a \in T$.
\end{enumerate}

We will use the following result to prove Corollary \ref{cor:gromov-special-case} of the introduction.

\begin{prop}[Example  III.$\mathcal{C}$.3.11(1),~\cite{BH99}]\label{p:surface_subgroups} If $G(Y)$ is simple, then the (topological) fundamental group $\pi_1(Y)$ embeds in the fundamental group of the complex of groups $\pi_1\bigl(G(Y)\bigr)$.
\end{prop}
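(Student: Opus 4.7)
The plan is to exhibit $\pi_1(Y)$ as a retract of $\pi_1(G(Y))$ by constructing both an inclusion $\varphi\colon \pi_1(Y) \to \pi_1\bigl(G(Y)\bigr)$ and a retraction $r\colon \pi_1\bigl(G(Y)\bigr) \to \pi_1(Y)$ with $r \circ \varphi = \id$; injectivity of $\varphi$ is then automatic. Both maps will be defined on the generators appearing in the presentation of $\pi_1(G(Y))$ recalled in the excerpt, so the work is simply to check that the defining relations go to relations on each side.

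For $\varphi$, I would first recall that $\pi_1(Y)$ has the standard edge-path presentation on $E^{\pm}(Y')$ modulo (i) $(a^+)^{-1}=a^-$, (ii) triangle relations $a^+ b^+ = (ab)^+$ for each composable pair $(a,b)$, and (iii) $a^+ = 1$ for $a \in T$. (Since $Y'$ is the barycentric subdivision of a polygonal complex, every $2$--simplex of $Y'$ is a triangle of the form $\{a,b,ab\}$, so these really are all the relations.) Defining $\varphi(a^{\pm}) = a^{\pm}$ as generators of $\pi_1(G(Y))$, the only nontrivial check is the triangle relation, which in $\pi_1(G(Y))$ reads $a^+ b^+ = g_{a,b}(ab)^+$. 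This is precisely where the simplicity hypothesis is essential: since every $g_{a,b}=1$, the relation reduces to $a^+ b^+ = (ab)^+$, and $\varphi$ is a well-defined homomorphism.

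For $r$, I would set $r(g)=1$ for every $g$ in a local group $G_\sigma$ and $r(a^{\pm}) = a^{\pm} \in \pi_1(Y)$. The five families of relations listed in the excerpt then become: (1) trivial, as everything maps to $1$; (2) shared with $\pi_1(Y)$; (3) the image reads $a^+ b^+ = 1 \cdot (ab)^+$, which again holds by the triangle relation in $\pi_1(Y)$; (4) reads $1 = a^+ \cdot 1 \cdot a^- = 1$; and (5) is shared. Hence $r$ is well defined, and $r\circ\varphi$ is the identity on each generator $a^{\pm}$, so $\varphi$ is injective.

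The only step that requires any care is identifying the correct presentation of $\pi_1(Y)$ from $Y'$; once that is in hand, the splitting argument is purely formal. The whole proposition is really the observation that, in the simple case, the splitting of the short exact sequence of fundamental groupoids is built into the presentation by declaring $g_{a,b}=1$.
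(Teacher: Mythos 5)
Your proof is correct and is essentially the paper's own argument, made explicit. The paper asserts in one sentence that the subgroup generated by the $a^+$ is isomorphic to $\pi_1(Y)$; your retraction $r$ (kill the local groups) is precisely the standard way to verify that this subgroup does not collapse, and you correctly pinpoint that simplicity is needed only for $\varphi$ to be well-defined on the triangle relations.
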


\begin{proof}  Use the presentation of $\pi_1\bigl(G(Y)\bigr)$ given above.  Since $G(Y)$ is simple, the subgroup generated by the elements $a^+$, $a \in E(Y')$, is isomorphic to $\pi_1(Y)$.
\end{proof}

The \emph{universal cover} of $G(Y)$, denoted $\widetilde{G(Y)}$, is a simply-connected polygonal complex, equipped with a canonical action of $\pi_1(G(Y))$ without inversions.  The quotient of $\widetilde{G(Y)}$ by this action is naturally isomorphic to $Y$, and for each cell $\sigma$ of $Y$ the stabilizer in $\pi_1(G(Y))$ of any lift $\widetilde{\sigma}\subset\widetilde{G(Y)}$
is a homomorphic image of $G_\sigma$. The complex of groups $G(Y)$ is called {\em developable}
if every such homomorphism $G_\sigma\to {\rm Stab}_{\pi_1(G(Y))}(\tilde\sigma)$ is injective.
Equivalently, a complex of groups is developable if it is isomorphic to the complex of groups associated as above to an action without inversions on a simply-connected polygonal complex.  Unlike graphs of groups, complexes of groups are not in general developable.

We now describe a local condition for developability.  Let $Y$ be a connected polygonal complex
and let $\sigma \in
V(Y')$.  The \emph{star} of $\sigma$, written $\St(\sigma)$, is the union of the interiors of the simplices in
$Y'$ which meet $\sigma$.  If $G(Y)$ is a complex of groups over $Y$ then, even if $G(Y)$ is not developable, each
$\sigma \in V(Y')$ has a \emph{local development}. That is, we may associate to $\sigma$ an action of $G_\sigma$
on the star $\St(\tilde\sigma)$ of a vertex $\tilde\sigma$ in some simplicial complex, such that $\St(\sigma)$ is
the quotient of $\St(\tilde\sigma)$ by the action of $G_\sigma$.

To determine the local development, its link may be computed.  We recall this construction in the case that $G(Y)$ is simple and $\s$ is a vertex of $Y$ in Section~\ref{s:group theory} below.   If $G(Y)$ is developable, then for each $\sigma \in V(Y')$, the local development  $\St(\tilde\sigma)$ is
isomorphic to the star of each lift $\tilde\sigma$ of $\sigma$ in the universal cover $\widetilde{G(Y)}$. The
local development has a metric structure induced by that of the polygonal complex $Y$.  A complex of
groups $G(Y)$ is \emph{nonpositively curved} if for all $\sigma \in V(Y')$, the star $\St(\tilde\sigma)$ is CAT$(0)$ in this induced metric.  The importance of the nonpositive curvature condition is given by:

\begin{theorem}[Bridson--Haefliger \cite{BH99}, see also Gersten--Stallings \cite{S91} and Corson \cite{C92}]\label{t:nonpos} A nonpositively curved complex of groups is developable.
\end{theorem}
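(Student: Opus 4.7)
The plan is to build a candidate universal cover $D$ on which $\pi_1\bigl(G(Y)\bigr)$ acts, endow it with a locally $\mathrm{CAT}(0)$ metric using the curvature hypothesis, and invoke Cartan-Hadamard to deduce simple connectedness; developability then follows automatically.

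To construct $D$, use the presentation of $\pi_1 := \pi_1\bigl(G(Y)\bigr)$ recalled above. Take the disjoint union $\bigsqcup_{\sigma \in V(Y')} \pi_1 \times \overline{\St(\sigma)}$ (one copy per coset in $\pi_1/G_\sigma$) and quotient by the identifications $(g,x)\sim(gh, h^{-1}\cdot x)$ for $h \in G_\sigma$, together with the gluings along shared simplices induced by the monomorphisms $\psi_a$ and the cocycle $\{g_{a,b}\}$. By construction, $\pi_1$ acts on $D$ without inversions; the quotient is canonically identified with $Y$; the star of each lift $\widetilde{\sigma}$ is the local development $\St(\widetilde{\sigma})$; and each local group $G_\sigma$ maps into $\Stab_{\pi_1}(\widetilde{\sigma})$. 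Developability is exactly the assertion that these last homomorphisms are injective, which will follow as soon as $D$ is shown to be simply connected.

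Metrize $D$ by pulling back the polygonal metric of $Y$. By hypothesis each $\St(\widetilde{\sigma})$ is $\mathrm{CAT}(0)$, and a routine link computation promotes this to local $\mathrm{CAT}(0)$--ness at the midpoints of edges and of $2$--cells (whose links are spherical joins of trivial factors with vertex links). Hence $D$ is a locally $\mathrm{CAT}(0)$ polygonal complex. Apply the Cartan-Hadamard theorem for locally $\mathrm{CAT}(0)$ spaces (\cite[Theorem II.4.1]{BH99}): the universal cover $\widetilde{D}\to D$ is globally $\mathrm{CAT}(0)$, hence contractible. A direct combinatorial check using the defining relations (1)--(5) of $\pi_1$---each of which is realized by a $2$--disc already present in $D$: discs coming from the $G_\sigma$--action on a single star for (1) and (4), barycentric triangles of $Y'$ for (3), free-group trivialities for (2), and the tree collapse for (5)---shows that $\pi_1(D)=1$, so $\widetilde{D}=D$ and hence $D = \widetilde{G(Y)}$. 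The homomorphisms $G_\sigma \hookrightarrow \Stab_{\pi_1}(\widetilde{\sigma})$ are then automatically isomorphisms, completing the proof.

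The main obstacle is checking simple connectedness of $D$ rigorously: one has to confirm that each relation produced by the cocycle $\{g_{a,b}\}$ and the structure maps $\psi_a$ is genuinely the boundary of a disc in $D$, and not merely a formal identity in $\pi_1\bigl(G(Y)\bigr)$. The nonpositive curvature assumption enters here as the essential mechanism preventing ``folding'' when these local discs are assembled into $D$, so that links of vertices in $D$ admit no short closed geodesics that would obstruct null-homotoping the relations. Once local $\mathrm{CAT}(0)$--ness is verified globally, the rest is a formal unwinding of the definitions.
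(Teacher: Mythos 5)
The paper does not prove this statement; it is quoted as Theorem III.$\mathcal{C}$.4.17 of Bridson--Haefliger and simply cited. So I am evaluating your sketch against the standard proof there, which your outline resembles at a high level. Unfortunately there is a genuine circularity in the middle of the argument.

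The problem is in the two linked claims that ``the star of each lift $\widetilde{\sigma}$ is the local development $\St(\widetilde{\sigma})$'' and that ``developability will follow as soon as $D$ is shown to be simply connected.'' Neither is correct as stated. In your construction $D = \bigsqcup_\sigma \pi_1 \times_{G_\sigma} \St(\widetilde{\sigma})$, the group $G_\sigma$ acts on the $\pi_1$--factor only through the canonical homomorphism $\iota\colon G_\sigma \to \pi_1\bigl(G(Y)\bigr)$. If $K_\sigma := \ker(\iota|_{G_\sigma})$ is nontrivial --- which is precisely the non-developable situation --- then the star of $\widetilde{\sigma}$ in $D$ is the quotient $\St(\widetilde{\sigma})/K_\sigma$, not $\St(\widetilde{\sigma})$ itself. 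The nonpositive curvature hypothesis controls the metric of $\St(\widetilde{\sigma})$, not of this quotient, so you cannot conclude that $D$ is locally $\mathrm{CAT}(0)$ without \emph{already knowing} $K_\sigma = 1$. And even if you could, simple connectedness of $D$ would not give developability: the canonical development of \emph{any} complex of groups with respect to $\iota$ is connected and simply connected (Bridson--Haefliger III.$\mathcal{C}$.3.14), developable or not. Developability is the assertion that each $\iota|_{G_\sigma}$ is injective, and a simply connected $D$ with local stars $\St(\widetilde{\sigma})/K_\sigma$ exhibits $G(Y)$ only as the complex of groups with local groups $G_\sigma/K_\sigma$, which is not an isomorphism of complexes of groups unless the $K_\sigma$ are trivial.

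In other words, the step you flag as ``the main obstacle'' is not merely a technicality to be tidied up --- it is the entire theorem, and the reasoning you offer presupposes its conclusion. The actual Bridson--Haefliger argument does not quotient the local developments by an a priori possibly nontrivial kernel; rather, it glues the genuine stars $\St(\widetilde{\sigma})$ along their overlaps and uses the $\mathrm{CAT}(1)$ condition on links to show (via a developing-map / local isometry argument in the spirit of the Cartan--Hadamard theorem) that this gluing is consistent and never forces identifications within a single star, from which injectivity of $\iota|_{G_\sigma}$ is extracted as a consequence rather than an input. You would need to carry out that consistency argument explicitly to make the proof go through; as written, the appeal to ``a direct combinatorial check'' and the assertion that nonpositive curvature ``prevents folding'' do not supply it.
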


Complexes of groups may be used to construct lattices, as follows.
Let $G(Y)$ be a developable complex of groups, with universal cover a locally finite polygonal complex $X$, and
fundamental group $\G$.  We say that $G(Y)$ is \emph{faithful} if the action of $\G$ on $X$ is faithful.  If $G(Y)$ is faithful, then $\G$ may be regarded as a subgroup of the locally compact group $G=\Aut(X)$.  Moreover, by the discussion in this section and in Section~\ref{ss:lattices} above, if $G(Y)$ is faithful then $\Gamma$ is discrete if and only if all local groups
of $G(Y)$ are finite, and a discrete subgroup $\G < G$ is a uniform lattice in $G$ if and only if $Y \cong \G \backslash X$ is compact.

We now specialize to the case where $Y$ is a $2$--complex in which every $2$--cell is a regular right-angled hyperbolic $\p$--gon.  Let $G(Y)$ be a complex of groups over $Y$ such that each vertex of $Y$ has local development with link $\K$.   By the Gromov Link Condition for $2$--complexes \cite[Section II.5.24]{BH99}, $G(Y)$ is nonpositively curved if and only if for each vertex $\s$ of $Y$, every injective loop in the link of the local development of $\s$ has length at least $2\pi$.  Since the $2$--cells of $Y$ are right-angled, each edge of the link $\K$ has length $\frac{\pi}{2}$.  Also, each injective loop in the graph $\K$  contains at least $4$ edges.  Thus $G(Y)$ is nonpositively curved, and so by Theorem~\ref{t:nonpos}, the complex of groups $G(Y)$ is developable.  The universal cover $\widetilde{G(Y)}$ has all vertex links $\K$ and all $2$--cells regular right-angled hyperbolic $p$--gons.  The uniqueness of Bourdon's building (see Section~\ref{ss:links} above) then implies that the universal cover $\widetilde{G(Y)}$ is isomorphic to $\I$.

For convenience, we summarize the above discussion as follows:

\begin{corollary}\label{c:lattice}  Let $Y$ be a compact $2$--complex with each $2$--cell a regular right-angled hyperbolic $\p$--gon.  Then the following are equivalent:
\begin{enumerate}
\item\label{i:complex-exists}  There is a faithful complex of finite groups $G(Y)$ such that the link of each local development of a vertex of $Y$ is $\K$. 
\item\label{i:lattice-exists} There is a uniform lattice $\G$ in $\Aut(\I)$, acting without inversions, such that $Y \cong \G \backslash \I$.
\end{enumerate}
\end{corollary}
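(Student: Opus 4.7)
The plan is to prove the two implications separately, leveraging the machinery assembled in Section \ref{ss:complexes_of_groups} together with the uniqueness of Bourdon's building.

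For $(\ref{i:complex-exists}) \Rightarrow (\ref{i:lattice-exists})$: given a faithful complex of finite groups $G(Y)$ whose vertex local developments have link $\K$, I would first verify the Gromov Link Condition for $2$--complexes. The complete bipartite graph $\K$ has girth $4$, and because the $2$--cells of $Y$ are right-angled, each edge of every vertex link of a local development has length $\pi/2$. Hence every injective loop in every vertex link has length at least $2\pi$, so $G(Y)$ is nonpositively curved and therefore developable by Theorem \ref{t:nonpos}. The universal cover $\widetilde{G(Y)}$ is then a simply-connected polygonal complex in which every $2$--cell is a regular right-angled hyperbolic $\p$--gon and every vertex link is $\K$; by the uniqueness result recalled in Section \ref{ss:links}, $\widetilde{G(Y)} \cong \I$. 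The fundamental group $\G = \pi_1(G(Y))$ acts on $\I$ without inversions with quotient $Y$; faithfulness of $G(Y)$ embeds $\G$ in $\Aut(\I)$; and the finiteness of local groups together with compactness of $Y$ yields, via the criterion recalled in Section \ref{ss:lattices}, that $\G$ is a uniform lattice.

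For $(\ref{i:lattice-exists}) \Rightarrow (\ref{i:complex-exists})$: given a uniform lattice $\G < \Aut(\I)$ acting without inversions, I would construct $G(Y)$ over $Y = \G \bs \I$ as the complex of groups associated to the action, using the recipe recalled in Section \ref{ss:complexes_of_groups} (choose lifts of cells, take local groups to be their stabilizers in $\G$, and define the monomorphisms and twisting elements via further choices). Each local group is a cell stabilizer in $\G$; since $\G$ is discrete in $\Aut(\I)$ and cell stabilizers in $\Aut(\I)$ are compact-open, these stabilizers are finite. The local development of each vertex $\s$ of $Y$ identifies with the star of the chosen lift $\tilde{\s} \in \I$, so its link is $\Lk(\tilde{\s},\I) = \K$. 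Finally $G(Y)$ is faithful, since its universal cover is $\I$ and the canonical action of $\pi_1(G(Y))$ on $\I$ coincides with the original action of $\G$, which is faithful because $\G$ sits inside $\Aut(\I)$ by hypothesis.

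The work is essentially organizational, pulling together the Gromov link criterion, Theorem \ref{t:nonpos}, the uniqueness of Bourdon's building, and the lattice characterization from Section \ref{ss:lattices}. The only point that needs a moment's thought is faithfulness in the $(\ref{i:lattice-exists}) \Rightarrow (\ref{i:complex-exists})$ direction; this is automatic here because $\G$ is already presented as a subgroup of $\Aut(\I)$ and the canonical action of $\pi_1(G(Y))$ on the universal cover recovers the given action of $\G$ on $\I$.
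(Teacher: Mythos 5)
Your proof is correct and follows essentially the same route as the paper's: the forward direction relies on the Gromov link condition, Theorem~\ref{t:nonpos}, and the uniqueness of Bourdon's building (all assembled in the paragraph preceding Corollary~\ref{c:lattice} in the text), while the reverse direction uses the standard construction of the complex of groups induced by a cocompact action without inversions. The only difference is one of emphasis---you spell out the faithfulness check in the reverse direction, which the paper leaves implicit.
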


\begin{proof}For \eqref{i:complex-exists} $\Rightarrow$  \eqref{i:lattice-exists}, the lattice $\G$ is the fundamental group of $G(Y)$. For   \eqref{i:lattice-exists} $\Rightarrow$  \eqref{i:complex-exists}, each local group $G_\sigma$ is the stabilizer in $\I$ of some preimage $\tilde\sigma$ of $\sigma$.
\end{proof}

Thus, to prove the Main Theorem, it will suffice to establish the existence or non-existence of a faithful complex of finite groups $G(Y)$, where $Y$ is a tiling of a surface by regular right-angled hyperbolic $\p$--gons.

\section{Group theory and local developments}\label{s:group theory}

The main result of this section is Corollary~\ref{c:link_K}, which provides group-theoretic necessary and sufficient conditions for a simple complex of groups $G(Y)$ over a tessellation $Y$ to have local developments with links $\K$.  Corollary \ref{c:link_K} follows from Proposition \ref{p:complete_bipartite_product}, which considers more general complexes of groups $G(Y)$.  Our results in this section generalize Chapter 4 of Martin Jones' Ph.D. thesis~\cite{J09}.  In particular, in Theorem 4.17 of \cite{J09}, Jones established conditions similar to those in Proposition \ref{p:complete_bipartite_product} for $G(Y)$ a square complex of finite groups with trivial face groups.  

Let $G(Y)$ be a complex of groups, not necessarily simple, over a polygonal complex $Y$, and let $\s$ be a vertex of $Y$.  We assume that the link of $\s$ in $Y$ does not contain any loops, equivalently that none of the faces of $Y$ which are adjacent to $\s$ are glued to themselves along an edge containing $\s$. 

We recall the construction of the link in the local development $\St(\tilde\s)$ in this case (see  \cite[Definition III.$\mathcal{C}$.4.21]{BH99} for the general construction).  Suppose the vertex $\s$ has local group $G_\s = V$.  Let $\{ a_j \}$ be the set of edges of $Y'$ such that $t(a_j) = \s$ and $i(a_j)$ is the midpoint of an edge of $Y$.  Let $\{ c_k\}$ be the set of edges of $Y'$ such that $t(c_k) = \s$ and $i(c_k)$ is the barycenter of a face of $Y$.  Denote by $E_j = G_{i(a_j)}$ the corresponding edge groups and by $F_k = G_{i(c_k)}$ the corresponding face groups.  Whenever the edges $a_j$ and $c_k$ belong to the same face of $Y$, by our assumption on the link of $\s$ in $Y$ there is a unique edge $b_{kj} \in Y'$ such that $i(b_{kj}) = i(c_k)$ and $t(b_{kj}) = i(a_j)$, that is, the pair $(a_j,b_{kj})$ is composable with $a_j b_{kj} = c_k$.  By definition of a complex of groups, for each pair of composable edges $(a_j,b_{kj})$, there is a group element $g_{a_j,b_{kj}} \in V$ such that for all $g \in F_k$, 
\[ g_{a_j,b_{kj}} \, \psi_{c_k}\!(g)\, g_{a_j,b_{kj}}^{-1} = \psi_{a_j}\!\left( \psi_{b_{kj}}\!(g)\right).\]


\begin{definition}\label{d:link_local_development}  The \emph{link of the local development} $\St(\tilde\sigma)$ of $\s$ is the graph $L$ with:
 \begin{itemize}\item vertex set the disjoint union of the cosets $V/\psi_{a_j}\!(E_j)$; \item edge set the disjoint union of the cosets $V/\psi_{c_k}\!(F_k)$; and \item the edges between the vertices $g\psi_{a_j}\!(E_j)$ and $g'\psi_{a_{j'}}\!(E_{j'})$, where $g,g' \in V$, the cosets of $\psi_{c_k}\!(F_k)$ in \[\left(g\,\psi_{a_j}\!(E_j)\,g_{a_j,b_{kj}}\right) \cap \left(g'\,\psi_{a_{j'}}\!(E_{j'})\,g_{a_{j'},b_{kj'}}\right)\] for each $k$ such that $a_j$, $a_{j'}$, and $c_k$ belong to the same face of $Y$.\end{itemize}
\end{definition}


\begin{prop}\label{p:complete_bipartite_product}  Suppose that $a_j$ and $a_{j'}$ belong to at least one common face of $Y$.  In the link $L$ of the local development $\St(\tilde\sigma)$:
\begin{enumerate}
\item\label{i:at least one edge} There is at least one edge connecting each vertex $g\psi_{a_j}\!(E_j)$ to each vertex $g'\psi_{a_{j'}}\!(E_{j'})$ if and only if every $h \in V$ can be written in the form \[ h = \psi_{a_j}\!(e_j) \, x \, \psi_{a_{j'}}\!(e_{j'}) \] for some $e_j \in E_j$, $e_{j'} \in E_{j'}$, and \[ x \in X_{j,j'}:= \{  g_{a_j,b_{kj}} g_{a_{j'},b_{kj'}}^{-1} \mid \mbox{$a_j$, $a_{j'}$, and $c_k$ belong to the same face of $Y$} \}.\]
\item\label{i:at most one edge} There is at most one edge connecting each vertex $g\psi_{a_j}\!(E_j)$ to each vertex $g'\psi_{a_{j'}}\!(E_{j'})$ if and only if, for every $k$ such that $a_j$, $a_{j'}$, and $c_k$ belong to the same face of $Y$,   
\[ g_{a_j,b_{kj}}^{-1} \psi_{a_j}\!(E_j) g_{a_j,b_{kj}}   \cap g_{a_{j'},b_{kj'}}^{-1} \psi_{a_{j'}}\!(E_{j'}) g_{a_{j'},b_{kj'}} = \psi_{c_k}(F_k). \]
\end{enumerate}
\end{prop}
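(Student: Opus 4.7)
The plan is to unwind Definition \ref{d:link_local_development} of the link $L$ and translate each assertion about edges of $L$ into a group-theoretic condition. Fix $j,j'$ and, for brevity, write $A = \psi_{a_j}(E_j)$, $B = \psi_{a_{j'}}(E_{j'})$; for each $k$ such that $a_j, a_{j'}, c_k$ lie in a common face, write $C_k = \psi_{c_k}(F_k)$, $u_k = g_{a_j,b_{kj}}$, and $v_k = g_{a_{j'},b_{kj'}}$. By definition, an edge of $L$ between vertices $gA$ and $g'B$ coming from face $c_k$ is a coset of $C_k$ contained in the intersection $I_k := gAu_k \cap g'Bv_k$. Both (1) and (2) then reduce to understanding when $I_k$ is nonempty and how it decomposes into cosets of $C_k$.

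For (1), I would first observe that $I_k \neq \emptyset$ iff there exist $e_j \in E_j$ and $e_{j'} \in E_{j'}$ with $g\psi_{a_j}(e_j)u_k = g'\psi_{a_{j'}}(e_{j'})v_k$, which rearranges to $g^{-1}g' = \psi_{a_j}(e_j)(u_k v_k^{-1})\psi_{a_{j'}}(e_{j'})^{-1}$. Since inversion is a bijection of $\psi_{a_{j'}}(E_{j'})$ and $u_k v_k^{-1} \in X_{j,j'}$, this is exactly the decomposition in the statement. As the element $h := g^{-1}g'$ ranges over all of $V$ when the pair of cosets $(gA, g'B)$ varies, having at least one edge between every such pair is equivalent to every $h \in V$ admitting such a decomposition for some $x \in X_{j,j'}$, proving (1).

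For (2), I would analyze the coset structure of $I_k$ for fixed $k$. Writing $A'_k := u_k^{-1}A u_k$ and $B'_k := v_k^{-1}B v_k$, we have $gAu_k = (gu_k)A'_k$ and $g'Bv_k = (g'v_k)B'_k$, so if $I_k$ is nonempty, picking any $w \in I_k$ yields $gu_k A'_k = wA'_k$, $g'v_k B'_k = wB'_k$, and $I_k = w(A'_k \cap B'_k)$ is a single left coset of $A'_k \cap B'_k$. The cocycle identity $u_k \psi_{c_k}(f) u_k^{-1} = \psi_{a_j}(\psi_{b_{kj}}(f))$ for $f \in F_k$ gives $C_k = u_k^{-1}\psi_{a_j}(\psi_{b_{kj}}(F_k))u_k \subseteq u_k^{-1}Au_k = A'_k$, and symmetrically $C_k \subseteq B'_k$; hence $C_k \subseteq A'_k \cap B'_k$. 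Therefore, when $I_k \neq \emptyset$, it splits into exactly $[A'_k \cap B'_k : C_k]$ cosets of $C_k$, and this is the number of edges from face $c_k$ between $gA$ and $g'B$.

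Having at most one edge from each face $c_k$ between every pair of vertices is thus equivalent to $[A'_k \cap B'_k : C_k] = 1$, i.e., $A'_k \cap B'_k = C_k$, for every relevant $k$. Conversely, if $[A'_k \cap B'_k : C_k] \ge 2$, pick $z \in (A'_k \cap B'_k) \setminus C_k$ and set $g = 1$, $g' = u_k v_k^{-1}$; then both $u_k$ and $u_k z$ lie in the resulting $I_k$ but in distinct $C_k$-orbits, producing at least two edges, so the equality is necessary. This establishes (2). The main obstacle is careful bookkeeping of left versus right coset structure when rewriting $I_k$, and invoking the cocycle identity at the correct spot to derive the containment $C_k \subseteq A'_k \cap B'_k$; the proposition's implicit handling of several common faces containing $a_j, a_{j'}$ is then automatic, since the equality $A'_k \cap B'_k = C_k$ is imposed separately for each such $k$.
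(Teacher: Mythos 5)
Your proof of part (1) is correct and takes essentially the same route as the paper: both arguments unwind Definition~\ref{d:link_local_development} directly, taking $g = 1$, $g' = h$ for the forward direction and reading off the double-coset factorization $h = \psi_{a_j}(e_j)\,(g_{a_j,b_{kj}}g_{a_{j'},b_{kj'}}^{-1})\,\psi_{a_{j'}}(e_{j'})$ from the nonemptiness of the relevant intersection. For part (2), the paper simply asserts the argument is ``similarly straightforward'' and omits it; your argument --- rewriting the intersection as a single left coset of $A'_k \cap B'_k$, invoking the cocycle identity $g_{a_j,b_{kj}}\psi_{c_k}(f)g_{a_j,b_{kj}}^{-1} = \psi_{a_j}(\psi_{b_{kj}}(f))$ to establish $\psi_{c_k}(F_k) \subseteq A'_k \cap B'_k$, and exhibiting a pair $(g,g')$ for which the intersection is nonempty to close the converse --- is a correct and complete supply of that omitted step.
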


\noindent In the special case that $G(Y)$ is a simple complex of groups, the set $X_{j,j'}$ in \eqref{i:at least one edge} is trivial, and each edge group and face group may be identified with its image under inclusion into  the vertex group $V$.  Hence the condition in \eqref{i:at least one edge} reduces to $V = E_j E_{j'}$, and the condition in \eqref{i:at most one edge} reduces to $E_j \cap E_{j'} = F_k$.

\begin{proof}[Proof of Proposition \ref{p:complete_bipartite_product}]  We prove \eqref{i:at least one edge}; the proof of \eqref{i:at most one edge} is similarly straightforward.  Suppose that for all $g, g' \in V$ there is at least one edge of $L$ connecting the vertex $g\psi_{a_j}(E_j)$ to the vertex $g'\psi_{a_{j'}}(E_{j'})$.  
Let $h \in V$.  By Definition~\ref{d:link_local_development}, there is some $k$ such that $a_j$, $a_{j'}$, and $c_k$ belong to the same face of $Y$, and 
\[ \psi_{a_j}(E_j)g_{a_j,b_{kj}} \cap h\psi_{a_{j'}}(E_{j'})g_{a_{j'},b_{kj'}} \neq \emptyset.\]
Thus there exist $e_j \in E_j$ and $e_{j'} \in E_{j'}$ such that 
\[ h\psi_{a_{j'}}(e_{j'})g_{a_{j'},b_{kj'}} = \psi_{a_j}(e_j)g_{a_j,b_{kj}}. \]
The result follows immediately.

For the converse, let $g, g' \in V$.  Then by assumption there are elements $e_j \in E_j$ and $e_{j'} \in E_{j'}$, and some $k$ such that $a_j$, $a_{j'}$, and $c_{k}$ belong to the same face of $Y$, so that
\[g^{-1}g' = \psi_{a_j}(e_j) g_{a_j,b_{kj}} g_{a_{j'},b_{kj'}}^{-1} \psi_{a_{j'}}(e_{j'}).\]
Hence $g \psi_{a_j}(E_j)g_{a_j,b_{kj}} \cap g'\psi_{a_{j'}}(E_{j'})g_{a_{j'},b_{kj'}} \neq \emptyset$, and so by Definition \ref{d:link_local_development}, there is at least one edge between $g\psi_{a_j}(E_j)$ and  $g'\psi_{a_{j'}}(E_{j'})$.
\end{proof}

\begin{figure}
\begin{center}
\begin{overpic}{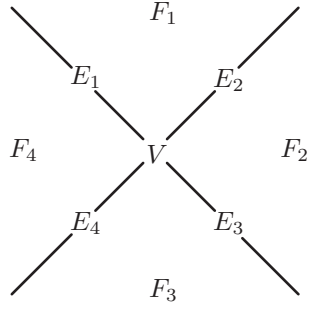}
\put(47,46){$V$}
\put(48,95){$F_1$}
\put(48,0){$F_3$}
\put(0,48){$F_4$}
\put(93,48){$F_2$}
\put(21,23){$E_4$}
\put(70,23){$E_3$}
\put(21,73){$E_1$}
\put(70,72){$E_2$}
\end{overpic}
\caption{Local groups for Corollary  \ref{c:link_K}.}
\label{fig:vertex-grouping}
\end{center}
\end{figure}

\begin{corollary}\label{c:link_K}  Suppose that $G(Y)$ is a simple complex of groups over a tessellation $Y$ of a compact orientable surface by right-angled polygons, with four such polygons meeting at each vertex.  Let $\s$ be a vertex in this tiling and let $V = G_\s$.  Let the adjacent edge groups $E_j$ and face groups $F_k$ be as in Figure \ref{fig:vertex-grouping}.  Since $G(Y)$ is simple, we may identify each $E_j$ and $F_k$ with its image under inclusion into $V$.



Then the local development $\St(\tilde\sigma)$ has link $L$ the complete bipartite graph $\K$ if and only if
\[ V = E_1E_2 = E_2E_3 = E_3E_4 = E_4E_1,\]
\[  E_1 \cap E_2 = F_1, \quad E_2 \cap E_3 = F_2, \quad E_3 \cap E_4 = F_3, \quad E_4 \cap E_1 = F_4, \]
  and
\[ |V:E_1| + |V:E_3| = \val = |V:E_2| + |V:E_4|. \]
\end{corollary}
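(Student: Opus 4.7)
The plan is to apply Proposition \ref{p:complete_bipartite_product} to the four pairs of cyclically consecutive edges around $\sigma$, and then interpret the three conclusions of the proposition in terms of the combinatorics of $K_{\val,\val}$.

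First I would unpack the structure of the link $L$ of the local development at $\sigma$ using Definition \ref{d:link_local_development}. Since four right-angled polygons meet at $\sigma$, the edges and faces around $\sigma$ are arranged cyclically as in Figure \ref{fig:vertex-grouping}, so $F_k$ is incident to exactly $E_k$ and $E_{k+1}$ (indices mod $4$). Consequently, by Definition \ref{d:link_local_development}, edges of $L$ occur only between cosets of $E_j$ and $E_{j'}$ when $j,j'$ are cyclically consecutive. This immediately exhibits $L$ as a bipartite graph with parts
\[ A \;=\; V/E_1 \,\sqcup\, V/E_3 \qquad \textrm{and} \qquad B \;=\; V/E_2 \,\sqcup\, V/E_4. \]
Hence $L \cong \K$ if and only if $|A|=|B|=\val$ and each vertex of $A$ is joined to each vertex of $B$ by exactly one edge.

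Next I would translate these three conditions into group theory. The cardinality condition reads $|V:E_1|+|V:E_3|=\val=|V:E_2|+|V:E_4|$, which is the index equation in the statement. For the existence of at least one edge between cosets of $E_j$ and $E_{j'}$ with $j,j'$ cyclically consecutive, I invoke Proposition \ref{p:complete_bipartite_product}\eqref{i:at least one edge}. Because $G(Y)$ is simple, all $g_{a,b}$ are trivial, so $X_{j,j'}=\{1\}$ and the condition collapses to $V=E_jE_{j'}$; applied to the four consecutive pairs this gives exactly $V=E_1E_2=E_2E_3=E_3E_4=E_4E_1$. For uniqueness of edges, I invoke Proposition \ref{p:complete_bipartite_product}\eqref{i:at most one edge}, which in the simple case reduces to $E_j\cap E_{j'}=F_k$ where $F_k$ is the unique face containing the edges corresponding to $E_j$ and $E_{j'}$; this yields the four stated intersection identities.

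For the converse direction, I would note that each of the three group-theoretic hypotheses feeds back into the corresponding combinatorial feature of $L$ via the same appeal to Proposition \ref{p:complete_bipartite_product}, and that no further edges of $L$ can appear between $V/E_1$ and $V/E_3$ (or between $V/E_2$ and $V/E_4$) because $E_1,E_3$ (respectively $E_2,E_4$) are never contained in a common face of $Y$. Thus the bipartite graph $L$ is complete with $\val$ vertices on each side, i.e.\ $L\cong \K$.

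There is no real obstacle here beyond careful bookkeeping: the heavy lifting has been done in Proposition \ref{p:complete_bipartite_product}. The only subtle point is recognizing that the cyclic arrangement of four polygons around $\sigma$ forces the bipartition $\{E_1,E_3\}$ vs.\ $\{E_2,E_4\}$, so no conditions on the non-adjacent pairs $E_1E_3$ or $E_2E_4$ appear in the statement.
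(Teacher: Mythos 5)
Your proposal is correct and follows the same route as the paper: the paper's (very terse) proof simply identifies the bipartition $V/E_1 \sqcup V/E_3$ versus $V/E_2 \sqcup V/E_4$ and leaves the rest to the reader via Proposition \ref{p:complete_bipartite_product} and the remarks following it on the simple case. Your write-up spells out the same bookkeeping in more detail, but there is no difference in method.
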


\begin{proof}  The link $L$ is a bipartite graph, with its two vertex sets being $V/E_1 \sqcup V/E_3$ and $V/E_2 \sqcup V/E_4$.  
\end{proof}

\section{Indexings and necessary conditions on a complex of groups}\label{s:indexings}

This section introduces the combinatorial data of an indexing of a complex of groups, which generalizes the indexing of a graph of groups in, for example, Bass--Kulkarni \cite{BK90}.  After defining indexings, we  establish several necessary conditions on indexings, in order for the associated complex of groups to have universal cover $\I$.  These conditions are  \emph{$\val$--thickness} (Section \ref{ss:thickness}), \emph{parallel transport} (Section \ref{ss:parallel}), and \emph{unimodularity} (Section \ref{ss:unimodularity}).  The unimodularity equation, referred to in the introduction, is derived in Section \ref{ss:unimodularity_equation}, where we also discuss existence and non-existence of solutions to the unimodularity equation.

Let $Y$ be a polygonal complex with barycentric subdivision $Y'$.  An \emph{indexing} $\Ind$ of $Y'$ is an assignment of a positive integer $\Ind(a)$ to every edge $a$ of $Y'$.  Suppose now that there is a complex of groups $G(Y)$ over $Y$, such that for each edge $a$ of $Y'$, the monomorphism $\psi_a:G_{i(a)} \to G_{t(a)}$ has finite index image.  In this case, we will say that $G(Y)$ has \emph{finite indices}.  

\begin{definition}\label{d:indexing}
 Let $G(Y)$ be a complex of groups with finite indices.  The \emph{indexing induced by $G(Y)$} is the indexing $\Ind = \Ind_{G(Y)}$ of $Y'$ given by, for each edge $a$ of $Y'$,
\[  \Ind_{G(Y)}(a)=|G_{t(a)}:\psi_a(G_{i(a)})|.  \]
 In particular, if $G(Y)$ is a complex of \emph{finite} groups, then $G(Y)$ has finite indices, and for each edge $a$ of $Y'$
\[ \Ind_{G(Y)}(a) = \frac{|G_{t(a)}|}{|G_{i(a)}|}. \]
\end{definition}

\subsection{$\val$--thickness}\label{ss:thickness}

Let $Y$ be a polygonal complex.  Fix a positive integer $\val \geq 2$.

\begin{definition}
An indexing $\Ind$ of $Y'$ is \emph{$\val$--thick} if for every vertex $\s$ of $Y'$ such that $\s$ is the midpoint of an edge of $Y$,  \[ \val = \sum_{\stackrel{a \in E(Y')}{t(a) = \s}} \Ind(a). \]
\end{definition}

In the case of graphs of groups, $\val$--thickness at each vertex is a necessary and sufficient condition for the universal covering tree to be $\val$--regular  \cite[Section 1]{BK90}.  For complexes of groups, we have:

\begin{lemma}\label{l:v_thickness} Let $G(Y)$ be a developable complex of groups over $Y$ such that the universal cover of $G(Y)$ is Bourdon's building $\I$.  Then the induced indexing $\Ind = \Ind_{G(Y)}$ is $\val$--thick.
\end{lemma}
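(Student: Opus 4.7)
The plan is to apply the orbit--stabilizer theorem to the action of the local group $G_\sigma$ on the set of $2$--cells of $\I$ containing a fixed lift of the edge of $Y$ whose midpoint is $\sigma$.

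The key geometric input is that in Bourdon's building every edge lies in exactly $\val$ two-cells. To see this I would argue that at an endpoint $v$ of an edge $\tilde{e}$ of $\I$, the two-cell corners at $v$ containing $\tilde{e}$ correspond to the edges of $\Lk(v,\I) = \K$ incident to the link-vertex representing $\tilde{e}$. Since $\K$ is regular of degree $\val$ there are exactly $\val$ such corners, and distinct corners lie in distinct two-cells because $\I$ is a polygonal complex (faces do not self-identify along edges).

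Since $G(Y)$ is developable with universal cover $\I$, Section~\ref{ss:complexes_of_groups} gives an action of $\G = \pi_1(G(Y))$ on $\I$ without inversions with quotient $Y$, under which $G_\sigma = \Stab_\G(\tilde\sigma)$ for a chosen lift $\tilde\sigma$ of $\sigma$. As $\sigma$ is the midpoint of an edge $e\subset Y$ and $\G$ acts without inversions, this stabilizer fixes the edge $\tilde{e}$ of $\I$ through $\tilde\sigma$, so $G_\sigma$ permutes the set $\mathcal{F}$ of $\val$ two-cells of $\I$ containing $\tilde{e}$. The $G_\sigma$--orbits on $\mathcal{F}$ correspond bijectively to the faces of $Y$ containing $e$, and hence to the edges $a\in E(Y')$ with $t(a)=\sigma$ (each such $a$ has $i(a)$ the barycenter of such a face). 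For a compatible choice of face-lift representing each orbit, the stabilizer in $G_\sigma$ is the subgroup $\psi_a(G_{i(a)})$, so by the orbit--stabilizer theorem the orbit has size $|G_\sigma:\psi_a(G_{i(a)})|=\Ind(a)$, and summing over orbits gives
\[ \val \;=\; |\mathcal{F}| \;=\; \sum_{\stackrel{a\in E(Y')}{t(a)=\sigma}} \Ind(a). \]
The one point requiring care is the identification of each orbit's stabilizer with $\psi_a(G_{i(a)})$, which is part of the standard construction of the complex of groups associated to an action; the possible conjugations introduced by the cocycle elements $g_{a,b}$ do not affect the index computation.
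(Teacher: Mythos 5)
Your proof is correct and takes essentially the same approach as the paper: the paper likewise notes that each lift of the edge-midpoint $\sigma$ lies in exactly $\val$ faces of $\I$ and that these faces correspond bijectively to the disjoint union of cosets $\coprod_{t(a)=\sigma} G_\sigma/\psi_a(G_{i(a)})$ by the construction of the universal cover; you have simply unpacked that bijection via the orbit--stabilizer theorem, which is the same underlying mechanism.
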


\begin{proof} First note that since $\I$ is locally finite, the complex of groups $G(Y)$ necessarily has finite indices.  Let $\s$ be a vertex of $Y'$ such that $\s$ is the midpoint of an edge of $Y$, and let $\tilde\s$ be any lift of $\s$ in (the barycentric subdivision of) $\I$.  By definition of $\I$, there are exactly $\val$ distinct faces of $\I$ which contain $\tilde\s$.  By the construction of the universal cover $ \widetilde{G(Y)} \cong\I$, the faces of $\I$ which contain $\tilde\s$ correspond bijectively to the cosets
\[ \coprod_{\stackrel{a \in E(Y')}{t(a) = \s}} G_{\s}/\psi_a(G_{i(a)}).\]
Hence $\Ind_{G(Y)}$ is $\val$--thick.
\end{proof}

\begin{corollary}\label{c:v_thickness} Let $Y$ be a tessellation of a compact orientable surface by regular right-angled hyperbolic $\p$--gons.  Let $G(Y)$ be a developable complex of groups over $Y$ with universal cover $\I$, and let $\Ind = \Ind_{G(Y)}$ be the indexing induced by $G(Y)$.  Then for every pair of distinct edges $a$ and $b$ in $Y'$ such that $t(a) = t(b)$ is the midpoint of an edge of $Y$,
\[ \Ind(a) + \Ind(b) = \val. \]
\end{corollary}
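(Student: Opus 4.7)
The plan is to reduce the statement directly to Lemma \ref{l:v_thickness} by counting the edges of $Y'$ incident to $\s$ at the terminal end.

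First, I would unpack the conventions. Recall that an edge $a$ of $Y'$ corresponds to an incidence of cells $\tau \subset \sigma$ in $Y$, oriented from $i(a)=\sigma$ to $t(a)=\tau$. Under this convention, the edges $a \in E(Y')$ with $t(a) = \s$, where $\s$ is the midpoint (i.e., barycenter) of some $1$--cell $e$ of $Y$, are exactly those whose initial vertex $i(a)$ is the barycenter of a cell $\sigma$ of $Y$ strictly containing $e$. Since $e$ is a $1$--cell and $Y$ is $2$--dimensional, the only such cells $\sigma$ are the $2$--cells of $Y$ that have $e$ in their boundary.

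Next, I would invoke the fact that $Y$ is a compact orientable surface tessellated by $\p$--gons. In any such surface tessellation, every $1$--cell is contained in exactly two $2$--cells. Hence there are exactly two edges of $Y'$ with $t(\cdot) = \s$, which forces these to be the two edges $a$ and $b$ in the hypothesis of the corollary.

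Finally, applying Lemma \ref{l:v_thickness} to the vertex $\s$ of $Y'$, which is the midpoint of an edge of $Y$, yields
\[
\val \;=\; \sum_{\substack{c \in E(Y')\\ t(c) = \s}} \Ind(c) \;=\; \Ind(a) + \Ind(b),
\]
which is the desired equation. There is no real obstacle here; the content of the corollary is simply that ``$\val$--thickness'' in the $2$--dimensional setting takes a particularly clean form whenever the underlying space is a surface, because each edge lies on only two faces. The only subtle point is making sure the orientation convention on $E(Y')$ is applied correctly so that we count precisely the two adjacent faces and nothing else.
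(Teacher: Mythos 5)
Your argument is correct and is essentially the proof the paper intends (the paper presents the corollary as an immediate consequence of Lemma~\ref{l:v_thickness} and the surface structure, without spelling it out). You correctly identify that, because $Y$ is a closed surface, there are exactly two edges of $Y'$ terminating at the midpoint $\s$ of any $1$--cell, namely the two edges coming from the face barycenters on either side of that edge, so the $\val$--thickness sum reduces to $\Ind(a) + \Ind(b) = \val$. One small wording caveat: the more robust phrasing is that each $1$--cell of $Y$ has exactly two \emph{face-to-edge incidences} (hence two edges of $Y'$ into its midpoint), which also covers the degenerate case in which a single $2$--cell is glued to itself along that edge; but this does not affect the validity of your conclusion.
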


\subsection{Parallel transport}\label{ss:parallel}

Let $Y$ be a polygonal complex.  We first define an equivalence relation on the set of edges of $Y'$.

\begin{definition}\label{d:parallel}
Suppose that $a$, $b$, $a'$, and $b'$ are four distinct edges in $Y'$, such that $a$ and $b$ are composable, $b'$ and $a'$ are composable, and $ab = b'a'$.
Then we say that $a$ is \emph{parallel} to $a'$, and $b$ is \emph{parallel} to $b'$.  This relation generates an equivalence relation on the set of edges of $Y'$, which we call \emph{parallelism}.

An indexing $\Ind$ of $Y'$ has \emph{parallel transport} if it is constant on each parallelism class of edges in $Y'$.
\end{definition}

In our setting, the equivalence classes have the following geometric description.  Let $Y$ be a tessellation of a compact orientable surface by regular right-angled hyperbolic $\p$--gons.  Let $h$ be a closed, oriented geodesic consisting of edges in the tessellation $Y$.  The geodesic $h$ has a  collar neighborhood which is an (immersed) annulus in $S$. Thus the collar of $h$  has a well-defined \emph{right side} and \emph{left side}. There will then be one parallelism class containing all the edges of $Y'$ that come into $h$ from the right, that is, their terminal vertex is on $h$ and their initial vertex is to the right of $h$. Another, distinct parallelism class contains all the edges of $Y'$ that come into $h$ from the left.  Note that the sets of edges of $Y'$ that come into $h$ from the left and from the right are disjoint, because $S$ is orientable.  All together, the number of parallelism classes of edges in $Y'$ is twice the number of closed geodesics in $Y$.

\begin{lemma}\label{l:parallel} Let $Y$ be a tessellation of a compact orientable surface by regular right-angled hyperbolic $\p$--gons.  Let $G(Y)$ be a developable complex of groups over $Y$ such that the universal cover of $G(Y)$ is $\I$.  Then the induced indexing $\Ind = \Ind_{G(Y)}$ has parallel transport.
\end{lemma}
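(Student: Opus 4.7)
The strategy is to verify the generating case of parallel transport directly — that $\Ind(a)=\Ind(a')$ and $\Ind(b)=\Ind(b')$ for each $4$--tuple $(a,b,a',b')$ from Definition \ref{d:parallel} — and then extend by transitivity. For such a $4$--tuple in the $2$--dimensional complex $Y$, the common edge $c:=ab=b'a'$ of $Y'$ must connect the barycenter $b_f$ of some face $f$ of $Y$ to a vertex $v$ of $f$, with the two factorizations passing through the midpoints $m_e$ and $m_{e'}$ of the two edges $e,e'$ of $f$ meeting at $v$. Unwinding the conventions, $a$ is the edge $m_e\to v$, $b$ is $b_f\to m_e$, $a'$ is $b_f\to m_{e'}$, and $b'$ is $m_{e'}\to v$. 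Observe that $a$ and $a'$ are of different source/target types in $Y'$.

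The key step is to interpret every relevant index as a fiber count in the link of $\tilde v\in\I$. Fix a lift $\tilde v$ of $v$; by hypothesis $\Lk(\tilde v,\I)=\K$, and the projection $\I\to Y$ realizes $\Lk(v,Y)$ as the quotient of $\K$ by the action of $G_v=\Stab(\tilde v)$. Since four right-angled polygons meet at $v$, the quotient $\Lk(v,Y)$ is the bipartite $4$--cycle. For each edge $e$ of $Y$ at $v$, let $n_e$ denote the number of edges of $\I$ at $\tilde v$ projecting to $e$. Then by developability and the construction of the universal cover,
\[
\Ind(a) \;=\; |G_v:\psi_a(G_{m_e})| \;=\; n_e,
\qquad
\Ind(b') \;=\; |G_v:\psi_{b'}(G_{m_{e'}})| \;=\; n_{e'}.
\]
A face of $\I$ at $\tilde v$ projects to $f$ if and only if its two edges at $\tilde v$ project to $e$ and $e'$, so such faces correspond to edges of $\K$ joining the fiber over $e$ to the fiber over $e'$. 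Adjacency of $e$ and $e'$ in the $4$--cycle forces these two fibers into opposite sides of the bipartition of $\K$, and because $\K$ is complete bipartite, the count of such edges is exactly $n_e\cdot n_{e'}$. Hence $\Ind(c)=n_e n_{e'}$.

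The cocycle identity $\Ad(g_{a,b})\circ\psi_{ab}=\psi_a\circ\psi_b$, together with injectivity of the $\psi$'s, yields the multiplicativity $\Ind(ab)=\Ind(a)\Ind(b)$. Applied to both factorizations of $c$, this gives
\[
\Ind(a)\,\Ind(b) \;=\; \Ind(c) \;=\; \Ind(b')\,\Ind(a'),
\]
and substituting $\Ind(a)=n_e$, $\Ind(b')=n_{e'}$, $\Ind(c)=n_e n_{e'}$ forces $\Ind(b)=n_{e'}=\Ind(b')$ and $\Ind(a')=n_e=\Ind(a)$, completing the basic case; transitivity of the parallelism relation finishes the lemma. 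The main subtlety I anticipate is the asymmetric bookkeeping between the two factorizations: $a$ and $a'$ live at different cells of $Y'$, so one must carefully verify that $\Ind(a')$ equals the fiber size over $e$ rather than over $e'$. Once the labels are set, the complete bipartiteness of $\K$ does all the work.
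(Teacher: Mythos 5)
Your proof is correct and is essentially the same argument as the paper's: fix a lift of the vertex $v = t(a)$, interpret the relevant indices as fiber counts in the link of that lift, and use the complete bipartite structure of $\K$ to relate the count of faces over $f$ to the two edge counts $n_e$, $n_{e'}$. The only cosmetic difference is that you compute $\Ind(c)=n_e n_{e'}$ and then divide via the multiplicativity $\Ind(ab)=\Ind(a)\Ind(b)$, whereas the paper fixes a single edge in the fiber over $e'$ and directly matches the faces attached to it with the fiber over $e$ — the two bookkeeping schemes encode the same count.
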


\noindent Thus if $h$ is a closed geodesic of $Y$, every edge of $Y'$ that comes into $h$ from the right will have the same index $n$. By Corollary \ref{c:v_thickness}, every edge of $Y'$ that comes into $h$ from the left is forced to have index $v-n$.

\begin{proof}[Proof of Lemma \ref{l:parallel}]  It suffices to prove that $\Ind(a) = \Ind(a')$ for edges $a$ and $a'$ as in Definition~\ref{d:parallel}.  Let $\s = t(a)$.  Note that since $Y$ is $2$--dimensional, $\s$ is a vertex of the tessellation $Y$.  Write $G_\s = V$, $G_{i(a)} = E_1$, $G_{t(a')} = E_2$, and $G_{i(a')} = F$.  By abuse of notation, we identify $E_1$ with the subgroup $\psi_a(E_1)$ of $V$ and identify $F$ with the subgroup $\psi_{a'}(F)$ of $E_2$.  We now wish to show that $|V:E_1| = |E_2:F|$.

Fix a lift $\tilde\s$ of $\s$ in the universal cover $\I$.  Note that $\tilde\s$ is a vertex of $\I$.  We will color a subset of the edges of $\I$ which are adjacent to $\tilde\s$ as follows.  Use red to color the edges of $\I$ adjacent to $\tilde\s$ which project to the edge of $Y$ with midpoint $i(a)$, and use blue to color the edges of $\I$ adjacent to $\tilde\s$ which project to the edge of $Y$ with midpoint $t(a')$.

Since the link of $\tilde\s$ in $\I$ is the complete bipartite graph $\K$, every blue edge is connected to every red edge by exactly one face of $\I$.  Moreover, by construction of the universal cover, and since there is exactly one face of $Y$ lying between the edges with midpoints $i(a)$ and $t(a')$, every face of $\I$ which connects a red edge to a blue edge projects to the face of $Y$ with barycenter $i(a')$.
Therefore, for any blue edge, the number of faces of $\I$ which are attached to this edge and which project to the face of $Y$ with barycenter $i(a')$ is equal to the total number of red edges.  By construction of the universal cover, the number of such faces of $\I$ is equal to the index of $F$ in $E_2$, and the total number of red edges is equal to the index of $E_1$ in $V$.  Hence $|V : E_1| = |E_2 : F|$, as required.
\end{proof}

\subsection{Unimodularity}\label{ss:unimodularity}

Let $Y$ be a polygonal complex and $\Ind$ an indexing of $Y'$.
Let $\ell$ be an oriented closed loop in the $1$--skeleton of $Y'$.  To the loop $\ell$ we associate a positive rational number $\Ind(\ell)$, which is the product of the integers $\Ind(f)$ for each edge $f$ traversed in $\ell$ from initial vertex to terminal vertex, and the rational numbers $\Ind(b)^{-1}$ for each edge $b$ traversed in $\ell$ from terminal vertex to initial vertex.  That is, $\Ind(\ell)$ is the product of the indices of the edges traversed forwards in $\ell$, divided by the product of the indices of the edges traversed backwards in $\ell$.

\begin{definition}\label{def:unimodular}
The indexing $\Ind$ of $Y'$ is \emph{unimodular} if $\Ind(\ell) = 1$ for every oriented closed loop $\ell$ in $Y'$.
\end{definition}

Definition \ref{def:unimodular} is suggested by the corresponding criterion for graphs of groups  \cite[Section 1]{BK90}. It is clear that  for a complex of finite groups $G(Y)$, the induced indexing $\Ind=\Ind_{G(Y)}$ is unimodular.



%

%
\subsection{The unimodularity equation}\label{ss:unimodularity_equation}

Let $Y$ be a tessellation of a surface and $\Ind$ an indexing of $Y'$.  We now derive a polynomial equation, called the unimodularity equation, which must have integer solutions in order for there to be a complex of groups $G(Y)$ with universal cover $\I$ inducing this indexing.  We then discuss existence and non-existence of solutions to the unimodularity equation.

%


In the following result, the circuits $\ell$ of interest are oriented closed loops in the dual $1$--skeleton to $Y$. That is, each such $\ell$ is an oriented closed loop in the $1$--skeleton of $Y'$, such that the vertices in $\ell$ alternate between barycenters of faces of $Y$ and midpoints of edges of $Y$.   We may also think of such an $\ell$ as being an oriented closed circuit in the dual graph to the tiling $Y$. 

\begin{lemma}\label{l:unimodularity}
Let $Y$ be a tessellation of a compact orientable surface by regular right-angled hyperbolic $\p$--gons.  Let $G(Y)$ be a developable complex of groups over $Y$ with universal cover $\I$, and let $\Ind = \Ind_{G(Y)}$ be the indexing induced by $G(Y)$. For an oriented closed circuit $\ell$ of length $k$ in the dual $1$--skeleton to $Y$, let $f_1,\ldots,f_k$ be the edges of $Y'$ which are traversed forwards in $\ell$. Then the closed circuit $\ell$ induces an equation 
       \begin{equation}\label{e:unimodularity} \prod_{j=1}^{k} a_j \: = \:  \prod_{j=1}^{k} (\val-a_j), \quad\mbox{ where } a_j = \Ind(f_j) \in \mathbb{N} \mbox{ and } 1 \leq a_j < \val. \end{equation}
We call this equation the \emph{unimodularity equation}.
\end{lemma}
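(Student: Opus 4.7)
The plan is to combine the unimodularity of the induced indexing (noted in Section~\ref{ss:unimodularity}) with the $\val$--thickness corollary (Corollary~\ref{c:v_thickness}) applied at each edge-midpoint the circuit passes through. The whole argument is really a bookkeeping exercise that turns the abstract identity $\Ind(\ell)=1$ into the explicit polynomial relation stated in the lemma.

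First, I would unpack the structure of a closed circuit $\ell$ in the dual $1$--skeleton. The vertices of this dual $1$--skeleton are the barycenters of $2$--cells and the midpoints of edges of $Y$, and in $Y'$ an edge $a$ with these endpoints is oriented from the face-barycenter $i(a)$ to the edge-midpoint $t(a)$. Since the vertices of $\ell$ alternate strictly between the two types, as we traverse $\ell$ we alternately run along an edge of $Y'$ forwards (face-barycenter to edge-midpoint) and backwards (edge-midpoint to face-barycenter). Closing the loop forces equal numbers of forward and backward steps; enumerating $\ell$ so that its forward edges are $f_1,\dots,f_k$, let $b_j$ denote the backward edge traversed immediately after $f_j$, so that $t(f_j)=t(b_j)$ is the midpoint of the $j$-th edge of $Y$ that $\ell$ crosses, and $i(b_j)$ is the barycenter of the next face visited. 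Because $\ell$ crosses from one face of $Y$ into the adjacent face at each step, $i(f_j)\neq i(b_j)$, so $f_j$ and $b_j$ are distinct edges of $Y'$ sharing their terminal vertex.

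Next I would apply Corollary~\ref{c:v_thickness} at the edge-midpoint $t(f_j)=t(b_j)$ to conclude
\[
\Ind(f_j)+\Ind(b_j)=\val,
\]
so that $\Ind(b_j)=\val-a_j$. In particular, positivity of $\Ind(b_j)\in\mathbb{N}$ gives $1\le a_j<\val$, as required. At this point the only input left is to invoke unimodularity of $\Ind=\Ind_{G(Y)}$ (clear for complexes of finite groups as remarked in Section~\ref{ss:unimodularity}, which is the relevant setting for constructing lattices via Corollary~\ref{c:lattice}), applied to the loop $\ell$:
\[
1 \;=\; \Ind(\ell) \;=\; \prod_{j=1}^{k}\frac{\Ind(f_j)}{\Ind(b_j)} \;=\; \prod_{j=1}^{k}\frac{a_j}{\val-a_j}.
\]
Cross-multiplying yields the unimodularity equation $\prod_{j=1}^k a_j=\prod_{j=1}^k(\val-a_j)$.

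There is no serious obstacle; the only care needed is in matching conventions, namely verifying that in any traversal of the dual $1$--skeleton exactly the "face-to-edge" edges of $Y'$ appear with their natural $Y'$-orientation and the "edge-to-face" edges appear reversed, and that the two $Y'$-edges incident to each visited midpoint are distinct so that Corollary~\ref{c:v_thickness} actually applies. Both points follow from the fact that $Y$ is a tessellation of an orientable surface in which $\ell$ proceeds from one face across a $1$--cell to a neighboring face, rather than backtracking.
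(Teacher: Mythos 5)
Your proof is correct and follows essentially the same route as the paper: identify the forward and backward edges at each edge-midpoint, apply Corollary~\ref{c:v_thickness} to get $\Ind(b_j)=\val-a_j$, then invoke unimodularity $\Ind(\ell)=1$ and cross-multiply. You additionally spell out why the dual-circuit traversal pairs up forward and backward edges at each midpoint and why $1\le a_j<\val$ (from positivity of both indices), details the paper leaves implicit.
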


\begin{proof}
For $j = 1, \ldots, k$, let $f_1,\ldots,f_k$ be the edges of $Y'$ which are traversed forwards in $\ell$, and let $b_1,\ldots,b_k$ be the edges of $Y'$ which are traversed backwards in $\ell$, so that $t(f_j) = t(b_j)$. Then, by Corollary \ref{c:v_thickness} ($\val$--thickness), $\Ind(b_j) = \val - \Ind(f_j)$. Upon setting $a_j = \Ind(f_j)$, we obtain
\begin{equation}\label{e:uni-product}
\prod_{j=1}^k \frac{a_j}{\val - a_j} \: = \: 
\prod_{j=1}^k \frac{\Ind(f_j)}{\Ind(b_j)} \: = \:
\Ind(\ell) \: = \: 1.
\end{equation}

\vspace{-3ex}
\end{proof}




\begin{remark}

\begin{enumerate}\item In the following cases, there are ``easy" solutions to the unimodularity equation:
\begin{enumerate}
\item\label{i:k even} When $k$ is even, put $a_2 = \val - a_1$, $a_4 = \val - a_3$, and so forth.
\item When $\val$ is even, put each $a_j = \val/2$.
\end{enumerate}
\item\label{i:multiply_b} There are some solutions when both $\val$ and $k$ are odd. For example, for any even integer $b$, let $\val = (b + 1)(b^2 + 1)$.  Then the unimodularity equation when $k = 3$ has a solution, not necessarily unique, given by $a_1 = b^2(b + 1)$ and $a_2 = a_3 = (b^2 + 1)$.
\item\label{i:multiply_v} For all $k$, if there is a solution for $\val$, then there is a solution for the same $k$ and any positive integer multiple of $\val$.
\item\label{i:add_2k} For all $\val$, if there is a solution for $k$, then there is a solution for the same $\val$ and $k' = k+2$. Simply set $a_{k+2} = \val - a_{k+1}$.
\end{enumerate}
\end{remark}



We now consider cases in which solutions to the unimodularity equation do not exist.  The main results below are Lemma \ref{l:unimodularity_power_of_prime} and Corollary \ref{c:inf many v not power prime}, which each provide an infinite family of odd integers $\val \geq 3$ such that when $k$ is odd, the unimodularity equation has no solution.  After proving Corollary \ref{c:inf many v not power prime}, we briefly discuss why we cannot effectively determine the set of solutions to the unimodularity equation.  The proofs of Corollaries \ref{c:no_solutions_some_p} and \ref{c:inf many v not power prime} and this discussion draw on private communications with Michael Broshi and Roger Heath-Brown.

\begin{lemma}\label{l:unimodularity_power_of_prime} Let $\val$ be a power of an odd prime.  Then the unimodularity equation~\eqref{e:unimodularity}  has no solutions when  $k$ is odd. \end{lemma}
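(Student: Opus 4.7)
The plan is to exploit the $q$-adic valuation on both sides of the unimodularity equation, where $\val = q^n$ for an odd prime $q$. The key observation is that for each $a_j$ with $1 \leq a_j < \val = q^n$, the $q$-adic valuation $\alpha_j := v_q(a_j)$ is strictly less than $n$. Consequently $v_q(\val - a_j) = \min(n, \alpha_j) = \alpha_j$, so the two sides of equation~\eqref{e:unimodularity} have matching $q$-adic valuations $\sum_j \alpha_j$ and the equation alone does not immediately collapse. The extra information we need lives one level deeper, in the residues modulo $q$.

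First I would factor each $a_j = q^{\alpha_j} m_j$ with $\gcd(m_j, q) = 1$, and write $\val - a_j = q^{\alpha_j}(q^{n - \alpha_j} - m_j)$. Since $\alpha_j < n$, the second factor satisfies $q^{n-\alpha_j} - m_j \equiv -m_j \pmod q$, and in particular is coprime to $q$. Dividing both sides of~\eqref{e:unimodularity} by $\prod_j q^{\alpha_j}$ yields the integer equation
\[
\prod_{j=1}^{k} m_j \;=\; \prod_{j=1}^{k} \bigl(q^{n-\alpha_j} - m_j\bigr),
\]
in which both sides are coprime to $q$.

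Now I would reduce this equation modulo $q$. The right-hand side becomes $\prod_j (-m_j) = (-1)^k \prod_j m_j$, so
\[
\prod_{j=1}^{k} m_j \;\equiv\; (-1)^k \prod_{j=1}^{k} m_j \pmod{q}.
\]
Since $\prod_j m_j$ is a unit mod $q$, this forces $1 \equiv (-1)^k \pmod q$. For $k$ odd this reads $1 \equiv -1 \pmod q$, i.e.\ $q \mid 2$, contradicting the assumption that $q$ is odd. Hence no solution exists.

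There is no real obstacle here; the only subtlety to double-check is the inequality $\alpha_j < n$, which is immediate from $a_j < \val = q^n$, and the fact that $q^{n-\alpha_j} - m_j$ is never zero (again because $a_j < \val$, so $q^{n-\alpha_j} > m_j$ is not required --- we only need nonvanishing mod $q$, which holds since $q \nmid m_j$). The argument uses nothing about $k$ beyond its parity, and nothing about $q$ beyond its being an odd prime, so it covers the entire family $\val = q^n$ uniformly.
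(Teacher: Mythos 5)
Your proof is correct and follows essentially the same approach as the paper: factor out the $q$-adic valuation of each $a_j$ (your $\alpha_j, m_j$ are the paper's $v_j, c_j$), divide through, and reduce the resulting equation modulo $q$, using oddness of $k$ and of $q$ to produce a contradiction. The only cosmetic difference is that you cancel $\prod m_j$ to get $1\equiv -1 \pmod q$ where the paper instead concludes $\prod c_j \equiv 0 \pmod q$; these are equivalent.
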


\begin{proof}  Let $\val = q^n$ where $q$ is an odd prime, and suppose that there is a solution $a_1,\ldots,a_k$  to~\eqref{e:unimodularity}.  Let $v_j$ be the $q$--valuation of $a_j$. That is, $a_j = q^{v_j} c_j$, where $c_j$ is relatively prime to $q$ and $v_j < n$.  Upon dividing the unimodularity equation through by $q^{v_1 + \ldots + v_k}$, we obtain
\[        \prod_{j=1}^{k} c_j = \prod_{j=1}^{k} (q^{n - v_j} - c_j).\]
Now reduce modulo $q$ to get \[        \prod_{j=1}^{k} c_j \equiv \prod_{j=1}^{k}  (-c_j) \equiv -\prod_{j=1}^{k} c_j   \mod q,\]
since $k$ is odd.  Since $q$ is odd, this means
\[       \prod_{i=1}^{k} c_j \equiv 0  \mod q. \]
But by construction, none of the $c_j$ is divisible by $q$. Contradiction. \end{proof}

To obtain additional infinite families of odd integers $\val \geq 3$ for which the unimodularity equation has no solution when $k$ is odd, we begin with the following statement.

\begin{lemma}\label{l:p_necessary_condition} Suppose that $\val = 3q^n$, where $q$ is an odd prime, and that $k$ is odd. If the unimodularity equation~\eqref{e:unimodularity} has a solution, then there is an integer $m$ such that
\[ 2^m \equiv (-1)^{m+1} \mod q.\]
\end{lemma}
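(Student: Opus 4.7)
The plan is to adapt the $q$-valuation argument from Lemma \ref{l:unimodularity_power_of_prime}, carefully tracking the effect of the extra factor of $3$ in $\val$. First, the case $q = 3$ is vacuous: then $\val = 3^{n+1}$ is itself an odd prime power, and Lemma \ref{l:unimodularity_power_of_prime} already forbids solutions when $k$ is odd, so the statement holds trivially. Henceforth assume $q \neq 3$ and fix a solution $a_1,\ldots,a_k$ of \eqref{e:unimodularity}. Write $a_j = q^{v_j} c_j$ with $\gcd(c_j,q) = 1$ and $0 \leq v_j \leq n$; note that when $v_j = n$, the inequality $a_j < 3q^n$ forces $c_j \in \{1,2\}$, each coprime to $q$ since $q$ is an odd prime distinct from $3$.

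I would next verify that $v_q(\val - a_j) = v_j$ in both regimes. For $v_j < n$ one has $\val - a_j = q^{v_j}(3q^{n-v_j} - c_j)$, and the parenthesized factor is $\equiv -c_j \not\equiv 0 \pmod q$. For $v_j = n$ one has $\val - a_j = q^n(3 - c_j)$, with $3 - c_j \in \{1,2\}$ coprime to $q$. Hence the two sides of \eqref{e:unimodularity} have equal $q$-adic valuation; dividing through by $q^{\sum v_j}$ and reducing modulo $q$ yields
\[
\prod_{j=1}^k c_j \;\equiv\; \prod_{v_j < n} (-c_j) \cdot \prod_{v_j = n} (3 - c_j) \pmod q.
\]

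Cancelling the common factor $\prod_{v_j < n} c_j$, writing $T = \{j : v_j = n\}$ and $s = |T|$, and using that $k$ is odd to rewrite $(-1)^{k-s} = (-1)^{s+1}$, the congruence becomes
\[
\prod_{j \in T} c_j \;\equiv\; (-1)^{s+1} \prod_{j \in T}(3 - c_j) \pmod q.
\]
Because $c_j \in \{1,2\}$ for $j \in T$, I would let $s_1$ (resp.\ $s_2$) count the $j \in T$ with $c_j = 1$ (resp.\ $c_j = 2$), so both sides collapse to powers of $2$: the left equals $2^{s_2}$, the right equals $(-1)^{s+1} \cdot 2^{s_1}$. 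Setting $m := s_2 - s_1$ and using that $s_1 + s_2 \equiv s_2 - s_1 \pmod 2$ (their difference is $2s_1$), the sign $(-1)^{s+1}$ converts to $(-1)^{m+1}$, producing $2^m \equiv (-1)^{m+1} \pmod q$ as required.

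The only real obstacle is the bookkeeping step $v_q(\val - a_j) = v_j$, particularly when $v_j = n$; this is the point at which the hypothesis $q \neq 3$ is truly used, since otherwise $3 - c_j$ could vanish mod $q$ and inflate the valuation of the right-hand side. Once that identity is in place, the remainder is a mechanical reduction modulo $q$, exploiting that the top-valuation residues $c_j$ are forced into the set $\{1,2\}$ and therefore carry precisely the power-of-$2$ data that appears in the conclusion.
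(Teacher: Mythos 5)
Your proof is correct and follows essentially the same path as the paper's: dispatch $q = 3$ via Lemma~\ref{l:unimodularity_power_of_prime}, extract $q$--adic valuations, divide through by $q^{\sum v_j}$, reduce modulo $q$, and exploit that the residues $c_j$ at top valuation $v_j = n$ must lie in $\{1,2\}$. Your $m = s_2 - s_1$ agrees with the paper's $m = l - 2l'$ (since $l = s_1 + s_2$ and $l' = s_1$); you are slightly more explicit in verifying that $v_q(\val - a_j) = v_j$ in both regimes before dividing, which is a welcome clarification of a step the paper leaves implicit, but not a substantive difference in approach.
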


\begin{proof}
If $q=3$, then by Lemma~\ref{l:unimodularity_power_of_prime}, there is no solution to equation \eqref{e:unimodularity}. Thus we may assume $q > 3$.

Let $v_j$ be the $q$--valuation of $a_j$, as in the proof of Lemma~\ref{l:unimodularity_power_of_prime}, with $a_j = q^{v_j}c_j$, and $c_j$ relatively prime to $q$. Then, after dividing the unimodularity equation~\eqref{e:unimodularity}  by $q^{v_1 + \ldots + v_k}$, we obtain
\begin{equation}\label{e:reduced-unimodularity}
\prod_{j=1}^k c_j \: = \:  \prod_{j=1}^k \left(\frac{\val}{q^{v_j}} - c_j\right).
\end{equation}

We may reorder the coefficients $a_j$ such that $v_j = n$ for $1 \leq j \leq l$, and $v_j < n$ for $l < j \leq k$. For all indices $j \leq l$, we have
$$a_j = q^{n}c_j < \val = 3 q^n.$$
Thus, for the first $\l$ indices, $c_j = 1$ or $c_j=2$. After further reordering, we may assume that $c_j = 1$ for $1 \leq j \leq l'$ and $c_j = 2$ for $l'< j \leq l$. After these simplifications, equation \eqref{e:reduced-unimodularity} takes the form
$$ \prod_{j=1}^{l'} 1 \, \prod_{j=l'+1}^l 2 \,  \prod_{j=l+1}^k c_j 
\: = \:  \prod_{j=1}^{l'} 2 \, \prod_{j=l'+1}^l 1 \, \prod_{j=l+1}^k \left(\frac{\val}{q^{v_j}} - c_j\right). 
$$
Now, recall that for all $i > l$, the term $\val / q^{v_j}$ is still divisible by $q$. Thus reducing modulo $q$ gives 
\begin{equation}\label{e:reduced-by-q}
2^{l - l'} \,  \prod_{j=l+1}^k c_j 
\: \equiv \:   2^{l'}  \, \prod_{j=l+1}^k (- c_j ) \: \mod q. 
\end{equation}

For the rest of the proof, we work in the finite field $\F_q$. Since $2$ and every $c_j$ is relatively prime to $q$, each one has a multiplicative inverse in $\F_q$. Thus equation \eqref{e:reduced-by-q} simplifies to
$$2^{l - 2l'} \: \equiv \:     (-1)^{k-l}  \: \mod q. $$
Finally, because $k$ is odd, we have
$$(-1)^{k-l} = (-1)^{1-l} = (-1)^{1+l} = (-1)^{1+l - 2l'}.$$
Thus setting $m = l - 2l'$ completes the proof.
\end{proof}



\begin{corollary}\label{c:no_solutions_some_p}  Suppose that $\val = 3q^n$ where $q$ is a prime such that the multiplicative order of $2$ in $(\mathbb{Z}/q\mathbb{Z})^\times$ is congruent to $2 \pmod 4$.  If $k$ is odd, then there is no solution to the unimodularity equation~\eqref{e:unimodularity}.
\end{corollary}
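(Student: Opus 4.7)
The plan is to use Lemma \ref{l:p_necessary_condition} as a black box and reduce the corollary to a short arithmetic statement: under the hypothesis on the order of $2$ mod $q$, the congruence $2^m \equiv (-1)^{m+1} \pmod q$ has no integer solution. Since Lemma~\ref{l:p_necessary_condition} asserts that existence of an odd--$k$ solution to the unimodularity equation \emph{forces} such an $m$, ruling out $m$ gives the result by contrapositive.

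To do this, let $d$ denote the multiplicative order of $2$ in $(\Z/q\Z)^\times$. The hypothesis says $d \equiv 2 \pmod 4$, so we may write $d = 2d'$ with $d'$ odd. I would then split into two cases according to the parity of $m$. If $m$ is even, then $(-1)^{m+1} = -1$, so the congruence becomes $2^m \equiv -1 \pmod q$; squaring gives $d \mid 2m$, while $2^m \not\equiv 1$ gives $d \nmid m$. Writing $2m = ds$, one checks that $s$ must be odd (else $d \mid m$), so $m = d's$ is a product of two odd integers and hence odd, contradicting the case assumption. If instead $m$ is odd, then $(-1)^{m+1} = 1$, so $2^m \equiv 1 \pmod q$, which forces $d \mid m$; but $d$ is even while $m$ is odd, again a contradiction.

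Both cases being impossible, no such $m$ exists, so by Lemma~\ref{l:p_necessary_condition} the unimodularity equation \eqref{e:unimodularity} has no solution when $k$ is odd and $\val = 3q^n$ with $q$ of the stated form. I do not expect any serious obstacle: the work is almost entirely in Lemma~\ref{l:p_necessary_condition}, and the only subtlety here is the standard observation that $2^m \equiv -1 \pmod q$ is solvable exactly when $d$ is even, and in that case only for $m \equiv d/2 \pmod d$; when $d \equiv 2 \pmod 4$ the value $d/2$ is odd, which is precisely what produces the parity clash in the even--$m$ case.
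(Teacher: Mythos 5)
Your proposal is correct and follows essentially the same route as the paper: both reduce to Lemma \ref{l:p_necessary_condition} and then rule out the congruence $2^m \equiv (-1)^{m+1} \pmod q$ by splitting on the parity of $m$, using that $2^m \equiv 1$ forces $m$ even while $2^m \equiv -1$ forces $m \equiv d/2 \pmod d$ with $d/2$ odd. Your write-up is a bit more explicit about the order-theoretic bookkeeping, but there is no substantive difference in the argument.
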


\begin{proof}  Let the multiplicative order of $2$ in $(\mathbb{Z}/q\mathbb{Z})^\times$ be $4j + 2$, where $j$ is a positive integer.  Then $2^m \equiv -1 \pmod q$ if and only if for some positive integer $l$, $m = (4j + 2)l + 2j + 1$, which is odd. Thus, for $m$ odd, there are no solutions to $2^m \equiv (-1)^{m+1} \pmod q$. Similarly,  $2^m \equiv 1 \pmod q$ if and only if for some positive integer $l$, $m = (4j + 2)l$, which is even. Thus, for $m$ even, there are also no solutions to $2^m \equiv (-1)^{m+1} \pmod q$.  The conclusion then follows from Lemma \ref{l:p_necessary_condition}.
\end{proof}

\begin{corollary}\label{c:inf many v not power prime}  Let $q$ be any of the infinitely many prime numbers of the form $q \equiv 3 \pmod 8$. Then the unimodularity equation has no solution for $v=3q^n$ and $k$ odd. 
\end{corollary}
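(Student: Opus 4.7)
The plan is to derive this Corollary directly from Corollary \ref{c:no_solutions_some_p}. Specifically, it suffices to verify the two hypotheses: (i) there are infinitely many primes $q \equiv 3 \pmod 8$; and (ii) for every such $q$, the multiplicative order of $2$ in $(\mathbb{Z}/q\mathbb{Z})^\times$ is congruent to $2 \pmod 4$. Once (ii) is in hand, the nonexistence of solutions to the unimodularity equation for $\val = 3q^n$ and odd $k$ is immediate from Corollary \ref{c:no_solutions_some_p}, and (i) upgrades this to an infinite family.

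Step (i) is immediate from Dirichlet's theorem on primes in arithmetic progressions, since $\gcd(3,8)=1$.

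For step (ii), fix a prime $q = 8k + 3$ and let $d$ denote the multiplicative order of $2$ modulo $q$. Because $q - 1 = 2(4k+1)$ and $4k+1$ is odd, the $2$-adic valuation $v_2(q-1)$ equals $1$. Since $d \mid q-1$, we may write $d = 2^{a} m$ with $a \in \{0,1\}$ and $m$ odd. The element $2$ is a quadratic residue modulo $q$ exactly when $2^{(q-1)/2} \equiv 1 \pmod q$, which is to say $d$ divides $(q-1)/2 = 4k+1$; this happens precisely when $a = 0$. On the other hand, the second supplement to quadratic reciprocity asserts that $2$ is a quadratic residue modulo an odd prime $q$ if and only if $q \equiv \pm 1 \pmod 8$. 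Since $q \equiv 3 \pmod 8$ does not satisfy this, $2$ is not a quadratic residue, forcing $a = 1$. Thus $d = 2m$ with $m$ odd, and hence $d \equiv 2 \pmod 4$, completing the verification.

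There is no serious obstacle here: the entire argument is a short number-theoretic calculation built on two classical ingredients (Dirichlet's theorem and the second supplement). The only points requiring care are to invoke Dirichlet for the specific class $3 \pmod 8$ (legitimate since $3$ is coprime to $8$) and to ensure that the second supplement is applied only to odd primes (automatic from $q \equiv 3 \pmod 8$).
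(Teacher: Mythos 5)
Your proof is correct and follows essentially the same route as the paper: invoke Dirichlet's theorem for the infinitude of primes $q \equiv 3 \pmod 8$, use the second supplement to quadratic reciprocity to deduce that $2$ is a nonresidue and hence has order $\equiv 2 \pmod 4$ in $(\mathbb{Z}/q\mathbb{Z})^\times$, then apply Corollary \ref{c:no_solutions_some_p}. Your spelled-out $2$-adic valuation bookkeeping is a slightly more explicit phrasing of the paper's ``$2$ is an odd power of a generator'' argument, but the substance is identical.
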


\begin{proof}  
%
By Dirichlet's Theorem, there are infinitely many primes $q \equiv 3 \pmod 8$.
If $q \equiv 3 \pmod 8$ is prime, then $(\mathbb{Z}/q\mathbb{Z})^\times$ is a cyclic group of order $2 \pmod 8$.
Furthermore, for any odd prime $q$, the equation $x^2\equiv 2 \pmod q$ only has solutions when $q \equiv \pm 1 \pmod 8$. In particular, when $q \equiv 3 \pmod 8$, the number $2$ represents an odd power of a generator of $(\mathbb{Z}/q\mathbb{Z})^\times$, hence the order of $2$ is $2 \pmod 4$. Thus, by Corollary \ref{c:no_solutions_some_p}, the unimodularity equation has no solution for $v=3q^n$ and $k$ odd.
\end{proof}

Given the result of Corollary \ref{c:no_solutions_some_p}, one might consider, for simplicity, just the primes $q \equiv 3 \pmod 4$ such that $2$ is a generator of $(\mathbb{Z}/q\mathbb{Z})^\times$, hence $2$ has order $q - 1 \equiv 2 \pmod 4$.  However, it is unknown whether there are infinitely many such primes $q$.  Indeed, a celebrated conjecture of E.\ Artin (with history dating back to Gauss) holds that $2$ generates $(\mathbb{Z}/q\mathbb{Z})^\times$ for infinitely many primes $q$.  This conjecture was proved by Hooley under the assumption of the Generalized Riemann Hypothesis for Dedekind zeta functions \cite{hooley}.
In particular, Hooley's argument guarantees the existence of infinitely many primes of the form $q \equiv 3 \pmod 4$, such that $2$ is a generator of $(\mathbb{Z}/q\mathbb{Z})^\times$.  We remark that there are also infinitely many primes $q \not \equiv 3 \pmod 4$ with the property that the multiplicative order of $2$ in $(\mathbb{Z}/q\mathbb{Z})^\times$ is  congruent to $2 \pmod 4$, but there is no effective description of these primes. Thus even in the very special case $\val = 3q$ with $q > 3$ prime, we cannot effectively determine the set of solutions to the unimodularity equation when $k$ is odd.

\section{Homology and tessellations}\label{s:homology}

In this section, we use the results of Sections \ref{s:group theory} and \ref{s:indexings}  to prove Theorem \ref{prop:homology-summary} and Corollary \ref{cor:no-lattice-v3} of the introduction.  Proposition \ref{prop:weak-homology} below establishes \eqref{i:weak intro} of Theorem \ref{prop:homology-summary}, and Proposition \ref{prop:strong-homology} below establishes \eqref{i:strong intro} of Theorem \ref{prop:homology-summary}.  Corollary \ref{cor:no-lattice-v3} then follows from Proposition \ref{c:F not divisible by 4} below.

Fix $S = S_g$ a compact, orientable surface of genus $g \geq 2$.  Let $Y$ be a tiling of $S_g$ by $\f$ copies of a regular right-angled hyperbolic $\pgon$.  Let $h_1, \ldots, h_n$ be the closed geodesics of this tiling, each one with prescribed orientation. The geodesic $h_i$ represents a homology class $[h_i] \in H_1(S, \Z) \cong \Z^{2g}$.
Occasionally, it will also help to view the vector $[h_i] \in \Z^{2g} \subset \R^{2g}$ as an element of $H_1(S, \R)$.

\begin{prop}\label{prop:weak-homology}
The following conditions are equivalent:
\begin{enumerate}
\item\label{i:exist c_i} There exist non-zero coefficients $c_1, \ldots, c_n \in \Z \setminus \{0\}$ such that $\sum c_i [h_i] = 0$.
\item\label{i:some odd v} There exists an odd integer $\val \geq 3$ and a simple, faithful complex of finite groups $G(Y)$ over $Y$ such that the link of every local development is $\K$.
\item\label{i:lattice for some v} There exists an odd integer $\val \geq 3$ and a uniform lattice $\G$ in $\Aut(\I)$, such that $Y \cong \G \backslash \I$. Furthermore, $\pi_1(Y)$ is a subgroup of $\G$.
\item\label{i:indexing for some v} There exists an odd integer $\val \geq 3$ and an indexing $\Ind$ of the barycentric subdivision $Y'$ of $Y$, such that $\Ind$ is $\val$--thick, unimodular, and has parallel transport.
\end{enumerate}
\end{prop}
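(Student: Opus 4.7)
The plan is to prove the four conditions equivalent via the cycle $(2) \Leftrightarrow (3)$, $(2) \Rightarrow (4) \Rightarrow (1) \Rightarrow (2)$. The equivalence $(2) \Leftrightarrow (3)$ is essentially Corollary \ref{c:lattice}, with the surface-subgroup clause $\pi_1(Y) \leq \Gamma$ supplied by Proposition \ref{p:surface_subgroups} (which applies because the complex is simple). For $(2) \Rightarrow (4)$ I will take $\Ind = \Ind_{G(Y)}$: Lemma \ref{l:v_thickness} gives $\val$-thickness, Lemma \ref{l:parallel} gives parallel transport, and unimodularity is automatic because $\Ind(a) = |G_{t(a)}|/|G_{i(a)}|$ makes the product around any closed loop telescope to $1$.

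For $(4) \Rightarrow (1)$, the plan is to translate unimodularity into a linear relation in $H_1(S,\R)$. Using parallel transport, I will assign to each oriented closed geodesic $h_i$ a single integer $n_i \in \{1,\dots,\val-1\}$, namely the common value of $\Ind$ on edges of $Y'$ entering $h_i$ from the right; by $\val$-thickness, edges entering from the left have index $\val - n_i$. Setting $m_i := \log\bigl(n_i/(\val-n_i)\bigr)$, the unimodularity equation of Lemma \ref{l:unimodularity} around any closed circuit $\ell$ in the dual $1$-skeleton of $Y$ becomes, after taking logarithms, the identity $\sum_i (\ell \cdot h_i)\, m_i = 0$, where $\ell \cdot h_i$ denotes algebraic intersection. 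Since every class of $H_1(S,\Z)$ is represented by such a circuit and the intersection pairing on $H_1(S,\R)$ is non-degenerate, I conclude $\sum_i m_i [h_i] = 0$ in $H_1(S,\R)$. Because $\val$ is odd, no $n_i$ equals $\val/2$, so every $m_i$ is nonzero; hence the kernel $K \subset \R^n$ of the rational linear map $(r_i) \mapsto \sum r_i [h_i]$ lies in no coordinate hyperplane, and a sufficiently general $\Z$-linear combination of a $\Z$-basis of $K \cap \Z^n$ yields nonzero integers $c_1,\dots,c_n$ with $\sum c_i [h_i] = 0$.

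For $(1) \Rightarrow (2)$ the strategy is to reverse this homological argument and then construct $G(Y)$ explicitly. Given nonzero $c_i$ with $\sum c_i [h_i] = 0$, set $\val := \mathrm{lcm}_i(2^{|c_i|}+1)$, which is odd since each factor is odd, and define $n_i := \val\cdot 2^{c_i}/(2^{c_i}+1) \in \{1,\dots,\val-1\}$, so that $n_i/(\val-n_i) = 2^{c_i}$. Propagating these $n_i$ and $\val - n_i$ by parallel transport as the indices on the right and left sides of each $h_i$, one checks that the unimodularity equation of Lemma \ref{l:unimodularity} holds around every dual circuit. It remains to realize this combinatorial data as the induced indexing of a simple complex of \emph{finite} groups $G(Y)$ whose local developments all have link $\K$, so that Corollary \ref{c:lattice} delivers the lattice required by $(2)$. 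The local groups will be taken finite abelian, with vertex, edge, and face group orders and inclusions dictated by the indices associated to the two geodesics through each vertex, together with the product-and-intersection conditions of Corollary \ref{c:link_K}.

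The main obstacle is the explicit construction of $G(Y)$ in $(1) \Rightarrow (2)$. At a single vertex $\sigma$ traversed by geodesics $h_i$ and $h_j$, Corollary \ref{c:link_K} prescribes the four edge indices while simultaneously requiring $V = E_k E_{k+1}$ and $E_k \cap E_{k+1} = F_k$ for each adjacent pair; finding finite abelian groups that satisfy all of these identities and glue consistently across shared edges and faces of $Y$ is the technical heart of the argument. By comparison, the homological translation in $(4) \Rightarrow (1)$ and the rational-subspace extraction of integer $c_i$ from $(m_i)$ should both be routine.
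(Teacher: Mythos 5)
Your overall architecture and the implications $(2) \Rightarrow (4)$ and $(4) \Rightarrow (1)$ follow the paper closely. In particular, the intersection-pairing argument for $(4) \Rightarrow (1)$, the logarithmic weights $m_i = \log(n_i/(\val-n_i))$, and the perturbation from real to rational to integer coefficients are essentially the paper's own argument. However, there are two genuine gaps.

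First, the claim that $(3) \Rightarrow (2)$ is ``essentially Corollary~\ref{c:lattice}'' is wrong. That corollary produces a \emph{faithful} complex of finite groups, but says nothing about the cocycle $g_{a,b}$ being trivial, i.e.\ it does not give a \emph{simple} complex. Condition (2) requires simplicity, and simplicity is exactly what Proposition~\ref{p:surface_subgroups} needs to embed $\pi_1(Y)$; so $(2) \Rightarrow (3)$ is fine, but $(3) \Rightarrow (2)$ is not immediate. The fix is to prove $(3) \Rightarrow (4)$ directly instead: Corollary~\ref{c:lattice} yields a developable complex of finite groups (not necessarily simple) with universal cover $\I$, and Corollary~\ref{c:v_thickness}, Lemma~\ref{l:parallel}, and the unimodularity of an induced indexing with finite local groups apply without any simplicity hypothesis. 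This is the route the paper takes, giving $(1)\Rightarrow(2)\Rightarrow(3)\Rightarrow(4)\Rightarrow(1)$.

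Second, and more seriously, $(1) \Rightarrow (2)$ is not actually proved. You construct a candidate indexing and verify that it is unimodular, then state that ``finding finite abelian groups that satisfy all of these identities and glue consistently \ldots\ is the technical heart of the argument,'' and stop there. There is no general theorem that a $\val$--thick, unimodular, parallel-transport indexing lifts to a complex of groups inducing it; indeed, the Remark after the proposition warns that even when it does, the realized valence $w$ may differ from $\val$. The paper's actual device is a homological one: normalize all $c_i > 0$ by reorienting the $h_i$, write the boundary $\sum c_i h_i$ as $\partial\Delta$ for an integral $2$--chain $\Delta = \sum d_j F_j$ with all $d_j \geq 0$ and some $d_j = 0$ (adding a multiple of the fundamental class), then assign face group $(\Z/b)^{d_j}$ to $F_j$, with edge and vertex groups the evident direct products of cyclic groups shown in Figure~\ref{fig:local-groups}. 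The chain $\Delta$ is the global bookkeeping structure that guarantees the local data glue coherently across all of $Y$, the Corollary~\ref{c:link_K} conditions are then checked pointwise, and the zero coefficient ensures faithfulness. Without $\Delta$ or an explicit substitute, your sketch leaves the central construction undone.
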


Note that the implication \eqref{i:indexing for some v} $\Rightarrow$ \eqref{i:some odd v} can be taken as a satisfying converse to the results in Section \ref{s:indexings}. However, one must be careful with quantifiers: if an indexing $\Ind$ is unimodular, has parallel transport, and is $\val$--thick for some $\val$, then there is a complex of finite groups $G(Y)$ such that the link of every local development is $K_{w,w}$, for some odd integer $w$ that may differ from $\val$.

\begin{proof}
\eqref{i:exist c_i} $\Rightarrow$ \eqref{i:some odd v}: Suppose there exist non-zero coefficients $c_1, \ldots, c_n \in \Z \setminus \{0\}$ such that $\sum c_i [h_i] = 0$. After reversing the orientation on some of the geodesics $h_i$, we may assume that all the coefficients $c_i$ are positive. Rather than proving that \emph{there exists} an odd integer $\val$ and a complex of finite groups over $Y$ with local developments  having link $\K$, we will prove the same conclusion \emph{for all} values of $\val$ that take a particular form.

Fix an integer $b \geq 1$, and let $\val$ be any positive integer that is divisible by $(b^{c_i} + 1)$ for every coefficient $c_i$. One way to produce an odd integer $\val$ of this form is to choose an even $b$, and then set $\val = \prod_{i=1}^{n} (b^{c_i} + 1)$. Once $\val$ is fixed, for every $i$ we define
$$k_i =\frac{\val}{b^{c_i} + 1}.$$ 
Our criterion for $\val$ is designed specifically to ensure that each $k_i$ will be an integer.

%

Since $\sum c_i [h_i]$ is trivial in $ H_1(S, \Z)$, it must be the boundary of an integral $2$--chain $\Delta$. That is, $\Delta = \sum_{j=1}^{\f} d_j F_j$ is a sum of faces with integer coefficients. We assume that all faces are positively oriented, i.e.\ that $\sum_{j=1}^{\f} F_j$ is the fundamental class in $H_2(S, \Z)$. After adding a multiple of the fundamental class to $\Delta$, we may assume that all coefficients $d_j$ are non-negative, and that at least one coefficient $d_j$ is zero.

We are now ready to assign local groups. Give each face $F_j$ the local group $(\Z/b)^{d_j}$, the direct product of $d_j$ copies of the cyclic group $\Z/b$. Now, let $e$ be an edge of the tiling, oriented the same way as its ambient geodesic $h_i$. Because we have assumed that $\bdy \left( \sum d_j F_j \right) = \sum c_i h_i$, we must have $c_i = d_j -  d_k $,
where $F_j$ is the face to the left of $e$ and $F_k$ is the face to the right of $e$. In words, the face to the left of $e$ must have a higher coefficient than the face to the right of $e$, with a difference of $c_i$.

\begin{figure}
\begin{center}
\begin{overpic}{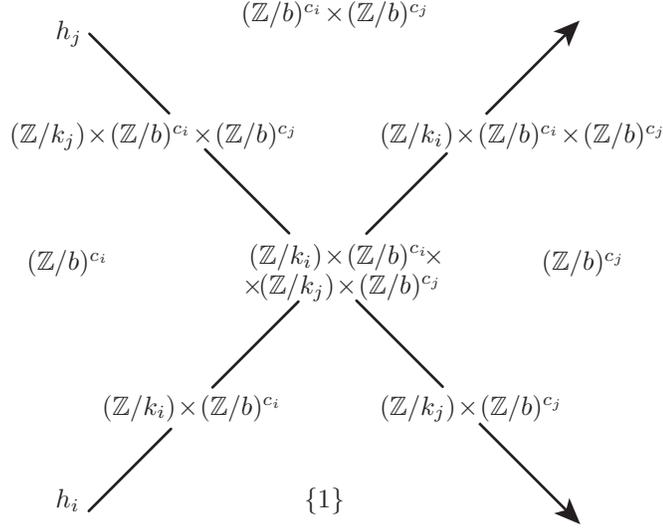}
\put(41,87){$(\Z/b)^{c_i} \cross (\Z/b)^{c_j}$}
\put(52,3){$\{ 1 \}$}
\put(4,44){$ (\Z/b)^{c_i}$}
\put(93,44){$(\Z/b)^{c_j}$}
\put(42,40){$\stackrel{\displaystyle{(\Z/k_i) \cross (\Z/b)^{c_i} \cross }}{ \cross (\Z/k_j) \cross (\Z/b)^{c_j}}$}
\put(9,3){$h_i$}
\put(9,84){$h_j$}
\put(1,66){$(\Z/k_j) \cross (\Z/b)^{c_i} \cross (\Z/b)^{c_j}$}
\put(65,66){$(\Z/k_i) \cross (\Z/b)^{c_i} \cross (\Z/b)^{c_j}$}
\put(17,19){$(\Z/k_i) \cross (\Z/b)^{c_i}$}
\put(65,19){$(\Z/k_j) \cross (\Z/b)^{c_j}$}
\end{overpic}
\caption{Local groups in the neighborhood of a vertex $\sigma$. Every local group near $\sigma$ also has a direct product factor of $(\Z/b)^{d(\sigma)}$, which is not shown in the figure.}
\label{fig:local-groups}
\end{center}
\end{figure}

Let $\sigma$ be a vertex of the tiling, and let $h_i$ and $h_j$ be the two geodesics that intersect at $\sigma$. (These may turn out to be the same geodesic, but we keep the indices $i$ and $j$ distinct for notational purposes.) Suppose, without loss of generality, that $h_j$ crosses $h_i$ from left to right, as in Figure \ref{fig:local-groups}. Let $d(\sigma)$  be the smallest coefficient on any of the four faces adjacent to $\sigma$. Then, if the four faces are positioned as in the figure,  their coefficients (going clockwise from East) are $d(\sigma)+c_j$, $d(\sigma)$, $d(\sigma)+c_i$, and $d(\sigma)+c_i+c_j$. The factor $(\Z/b)^{d(\sigma)}$ is present throughout, and is suppressed in Figure \ref{fig:local-groups}. We assign the local groups to edges and the vertex as shown in the figure.  All the  monomorphisms are the ``obvious'' inclusions of the corresponding cyclic groups.  Hence this complex of groups is simple.

Let us check that $G(Y)$ satisfies the criteria of Corollary \ref{c:link_K} above. In every oriented edge group along geodesic $h_i$, the index of the face group on the left is $k_i$ and the index of the face group on the right is $k_i b^{c_i}$. Thus the sum of these indices is
$$k_i + k_i b^{c_i} = \val,$$
by the definition of $k_i$. Similarly, for every oriented edge along geodesic $h_j$, the sum of indices from the face groups on the left and right is $k_j + k_j b^{c_j} = \val,$ as desired. Additionally, every face group is the intersection of the adjacent edge groups, and the vertex group at $\sigma$ is the group-theoretic product of every pair of consecutive edge groups. Thus, by Corollary \ref{c:link_K} above, the link of every local development is $\K$.

This complex of groups is faithful, because there is a face $F_j$ with coefficient $d_j=0$, hence with trivial face group. Therefore \eqref{i:some odd v} holds.

\medskip

\eqref{i:some odd v} $\Rightarrow$ \eqref{i:lattice for some v}: This is immediate from Proposition \ref{p:surface_subgroups} and Corollary \ref{c:lattice}.

\medskip

\eqref{i:lattice for some v} $\Rightarrow$ \eqref{i:indexing for some v}: By Corollary \ref{c:lattice}, there is a developable complex of finite groups $G(Y)$ over $Y$, whose universal cover is $\I$. Then the indexing $\Ind_{G(Y)}$ induced by $G(Y)$ is $\val$--thick by Corollary \ref{c:v_thickness} and has parallel transport by Lemma \ref{l:parallel}. Furthermore, the indexing induced by a complex of finite groups is unimodular.

\medskip

\eqref{i:indexing for some v} $\Rightarrow$ \eqref{i:exist c_i}: Suppose that, for some odd integer $\val \geq 3$, there is an indexing $\Ind$ of the barycentric subdivision $Y'$ of $Y$, such that $\Ind$ is $\val$--thick, has parallel transport, and 
is unimodular. We will first find real-valued coefficients $r_1, \ldots, r_n$ such that $\sum r_i [h_i] = 0$, and then find integer coefficients with the same property.

For each oriented geodesic $h_i$, recall from Section \ref{ss:parallel} that there is one parallelism class of edges of $Y'$ that come into $h_i$ from the left, and another parallelism class of edges of $Y'$ that come into $h_i$ from the right. By parallel transport, every edge that comes in from the left has the same index $a_i$; by $\val$--thickness, every edge that comes in from the right has the same index $\val - a_i$. Furthermore, because $\val$ is odd, $a_i \neq \val - a_i$. Given this setup, we assign each geodesic $h_i$ the real-valued coefficient 
$$r_i = \log (a_i) - \log (\val-a_i) \neq 0.$$

To see that $\sum r_i [h_i]$ vanishes in homology, we recall the definition of the intersection pairing on $H_1(S, \R)$. Given a pair of oriented closed curves $\alpha, \beta$ whose intersections in $S$ are transverse, the \emph{intersection pairing} 
$\langle \alpha, \,  \beta  \rangle$
is defined to be the number of times that $\beta$ crosses $\alpha$ from left to right, minus the number of times that $\beta$ crosses $\alpha$ from right to left. It is clear that this definition is skew-symmetric, and depends only on the homology classes of $\alpha$ and $\beta$. Furthermore, the pairing $\langle \alpha, \,  \beta  \rangle$ extends linearly to real-valued combinations of closed curves, and gives a non-degenerate, skew-symmetric, bilinear pairing on $H_1(S, \R) \cong \R^{2g}$.

Now, let $\alpha = \ell$ be an oriented closed circuit in the dual graph to $Y$, and let $\beta = \sum r_i [h_i]$, with coefficients $r_i$ as above. Then the intersection pairing $\langle \ell, \, \sum r_i [h_i]  \rangle$ is  exactly the logarithm of the unimodularity product $\Ind(\ell)$ computed  in equation \eqref{e:uni-product} of Lemma \ref{l:unimodularity}. Thus, because the indexing $\Ind$ is unimodular, we have 
$$\left\langle \ell, \: \sum r_i [h_i]  \right\rangle \: = \: \log \, \Ind(\ell) \: = \: 0,$$
for every closed circuit $\ell$ in the dual graph to $Y$. Therefore, since every integer homology class is represented by a circuit in the dual graph to $Y$, and in particular, such circuits span $H_1(S)$, the non-degeneracy of the pairing means that $\sum r_i [h_i] = 0  \in H_1(S, \R)$.

For each geodesic $h_i$, the homology class $[h_i]$ is an element of $H_1(S, \Z) \cong \Z^{2g} \subset \R^{2g}$. Thus we may view $[h_i]$ as an integer vector in $\R^{2g}$. Let $M$ be the $2g \times n$ matrix whose columns are $[h_1], \ldots, [h_n]$. We already know that the column vectors of $M$ are linearly dependent (over $\R$), hence the null space of $M$ is non-empty. But since every entry of $M$ is an integer, the null space $N(M)$ is spanned by rational vectors, and rational vectors are dense in $N(M)$. Thus there exist rational coefficients $q_1, \ldots, q_n$, with each $q_i$ arbitrarily close to $r_i$ (and in particular $q_i \neq 0$ for all $i$), such that $\sum q_i [h_i] = 0$. By clearing the denominators, we obtain non-zero integer coefficients $c_1, \ldots, c_n$ with the same property.
\end{proof}

The stronger statement of \eqref{i:exist c_i} $\Rightarrow$ \eqref{i:some odd v} that was mentioned near the beginning of the proof is as follows:

\begin{corollary}\label{cor:stronger}
 Let $b \geq 1$ be any positive integer, and suppose there are integer coefficients $c_i \geq 1$, such that $\sum c_i [h_i] = 0$. Then there is a simple, faithful complex of finite groups $G(Y)$ over $Y$ such that the link of every local development is $\K$, for every value of $\val$ that is divisible by $(b^{c_i}+1)$ for each $i$. Consequently, for each such $\val$ divisible by all $(b^{c_i}+1)$, there is a uniform lattice $\G$ in $\Aut(\I)$, such that $Y \cong \G \backslash \I$. 
\end{corollary}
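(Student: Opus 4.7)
The plan is to observe that the construction carried out in the proof of the implication \eqref{i:exist c_i} $\Rightarrow$ \eqref{i:some odd v} of Proposition \ref{prop:weak-homology} already establishes the stronger statement, once one tracks exactly which hypothesis on $\val$ is used at each stage. So my task is really to re-read that construction and isolate the divisibility conditions.

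First, I would fix $b \geq 1$ and the positive coefficients $c_i$. Because each $c_i \geq 1$, no orientation-reversal of geodesics is needed; the hypothesis $\sum c_i [h_i] = 0$ in $H_1(S,\Z)$ directly gives an integral $2$--chain $\Delta = \sum_{j=1}^{\f} d_j F_j$ bounding $\sum c_i h_i$, and as in the original proof I can add a multiple of the fundamental class to arrange $d_j \geq 0$ for all $j$ and $d_{j_0} = 0$ for some $j_0$. This part of the construction does not depend on $\val$ at all.

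Next, for any $\val$ divisible by every $(b^{c_i} + 1)$, set $k_i = \val/(b^{c_i}+1) \in \N$. I would then assign the local groups exactly as displayed in Figure \ref{fig:local-groups}: face $F_j$ receives $(\Z/b)^{d_j}$, and edges and vertices receive the product groups indicated, using the obvious inclusions (so the complex of groups is simple). The crucial verification, via Corollary \ref{c:link_K}, is that along each geodesic $h_i$ the two edge-group indices sum to $k_i + k_i b^{c_i} = k_i(b^{c_i}+1) = \val$, that face groups are intersections of adjacent edge groups, and that vertex groups factor as products of consecutive edge groups; these are identical to the checks already done in Proposition \ref{prop:weak-homology}, and the sum computation is \emph{exactly} what forces the divisibility requirement on $\val$. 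Since the face $F_{j_0}$ has trivial local group $(\Z/b)^0$, the complex of groups is faithful.

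Finally, the ``consequently'' clause follows from Corollary \ref{c:lattice}: applied to the simple faithful complex of finite groups $G(Y)$ just constructed, whose local developments have link $\K$, it produces a uniform lattice $\G < \Aut(\I)$ with $Y \cong \G\backslash\I$. I expect no real obstacle here, since the heavy lifting was done inside the proof of Proposition \ref{prop:weak-homology}; the only thing to be careful about is bookkeeping the quantifiers, to emphasize that for each admissible $\val$ one obtains its own lattice $\G$, and that the same $b$ and the same $2$--chain $\Delta$ work simultaneously for every such $\val$.
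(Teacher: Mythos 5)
Your proposal is correct and follows essentially the same route as the paper: the paper itself gives no separate proof of Corollary \ref{cor:stronger}, since it is established verbatim by the construction in the implication \eqref{i:exist c_i} $\Rightarrow$ \eqref{i:some odd v} of Proposition \ref{prop:weak-homology}, where the only role of the divisibility hypothesis on $\val$ is to make each $k_i = \val/(b^{c_i}+1)$ an integer. You correctly isolate that dependence, retain the trivial face group for faithfulness, and invoke Corollary \ref{c:lattice} for the final lattice statement, exactly as the paper intends.
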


\begin{prop}\label{prop:strong-homology}
The following conditions are equivalent:
\begin{enumerate}
\item\label{i:choice sign} There exist choices of sign $c_1, \ldots, c_n \in  \{ \pm 1\}$ such that $\sum c_i [h_i] = 0$.
\item\label{i:lattice for odd v} For every integer $\val \geq 2$, there is a uniform lattice $\G$ in $\Aut(\I)$, such that $Y \cong \G \backslash \I$. Furthermore, $\pi_1(Y)$ is a subgroup of $\G$.
\item\label{i:all odd v} For every integer $\val \geq 2$, there is a simple, faithful complex of finite groups $G(Y)$ over $Y$ such that the link of every local development is $\K$.
\item\label{i:v equals 3} For $\val = 3$, there is a complex of finite groups $G(Y)$ over $Y$ such that the link of every local development is $K_{3,3}$.
\end{enumerate}
\end{prop}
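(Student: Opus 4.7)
The plan is to establish the cycle $(1) \Rightarrow (3) \Rightarrow (2) \Rightarrow (4) \Rightarrow (1)$.  The first three implications will adapt the machinery of Proposition \ref{prop:weak-homology} to the case of all-unit coefficients, while the last implication, which I expect to be the main obstacle, will exploit the very rigid arithmetic at $\val = 3$.

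For $(1) \Rightarrow (3)$, I would first reverse the orientation of each $h_i$ whose sign $c_i$ equals $-1$, so that after relabeling $\sum [h_i] = 0$ with all coefficients equal to $1$.  I would then invoke Corollary \ref{cor:stronger} with every $c_i = 1$: the divisibility requirement $(b^{c_i} + 1) \mid \val$ becomes $(b+1) \mid \val$, which is satisfied for any prescribed $\val \geq 2$ by taking $b = \val - 1$.  This produces, for \emph{every} $\val \geq 2$, a simple and faithful complex of finite groups $G(Y)$ whose local developments have link $\K$, giving (3).

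For $(3) \Rightarrow (2)$, I would use Corollary \ref{c:lattice} to convert $G(Y)$ into a uniform lattice $\G < \Aut(\I)$ with $Y \cong \G \backslash \I$, and Proposition \ref{p:surface_subgroups} to deduce that $\pi_1(Y)$ embeds in $\pi_1(G(Y)) \cong \G$ because $G(Y)$ is simple.  The implication $(2) \Rightarrow (4)$ is immediate: specialize to $\val = 3$ and apply the reverse direction of Corollary \ref{c:lattice} to recover a (faithful, hence a) complex of finite groups over $Y$ whose local developments have link $K_{3,3}$.

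The substantive step is $(4) \Rightarrow (1)$.  Given a complex of finite groups $G(Y)$ whose local developments have link $K_{3,3}$, the Gromov link argument from Section~\ref{ss:complexes_of_groups} shows that $G(Y)$ is nonpositively curved, and hence developable with universal cover isomorphic to $\I_{\p,3}$ by the uniqueness of Bourdon's building.  The induced indexing $\Ind_{G(Y)}$ is therefore $3$-thick by Corollary \ref{c:v_thickness}, has parallel transport by Lemma \ref{l:parallel}, and is unimodular because any indexing induced by a complex of finite groups is unimodular.  I would then imitate the $(4) \Rightarrow (1)$ step of Proposition \ref{prop:weak-homology}: each oriented geodesic $h_i$ acquires, via parallel transport, a well-defined left-index $a_i \in \{1, 2\}$, and the real coefficients $r_i = \log(a_i) - \log(3 - a_i)$ satisfy $\sum r_i [h_i] = 0$ in $H_1(S, \R)$ through the intersection pairing against dual circuits.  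The crucial observation at $\val = 3$ is that $r_i \in \{ \pm \log 2 \}$ is \emph{forced}, so rescaling by $1/\log 2$ yields integer coefficients $c_i \in \{ \pm 1 \}$ with $\sum c_i [h_i] = 0$, which is exactly (1).  The hard part, and what makes this strengthening of Proposition \ref{prop:weak-homology} possible at all, is that $\val = 3$ is tight enough to force the real coefficients to land on $\{ \pm \log 2 \}$, removing the need for the density argument used in the weak version.
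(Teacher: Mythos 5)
Your proposal is correct, and it proves exactly the same cycle $(1) \Rightarrow (3) \Rightarrow (2) \Rightarrow (4) \Rightarrow (1)$ as the paper; the implications $(1) \Rightarrow (3)$, $(3) \Rightarrow (2)$, and $(2) \Rightarrow (4)$ match the paper step for step.

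The only divergence is in $(4) \Rightarrow (1)$. The paper argues more directly: from the complex of finite groups $G(Y)$ it builds the explicit real $2$--chain $\Delta = \sum_{j=1}^{\f} (\log\abs{G_j})\, F_j$ out of the logarithms of the face group orders, uses parallel transport (Lemma~\ref{l:parallel}) to observe that $\partial\Delta = \sum r_i h_i$ for real $r_i$, and concludes $\sum r_i [h_i] = 0$ simply because a boundary is null-homologous; $\val$--thickness then forces $[G_e:G_j] + [G_e:G_k] = 3$, whence $r_i = \pm\log 2$. You instead pass through the indexing $\Ind_{G(Y)}$ and reuse the intersection-pairing/nondegeneracy argument from the $(4)\Rightarrow(1)$ step of Proposition~\ref{prop:weak-homology}, arriving at the same $r_i \in \{\pm\log 2\}$. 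Both routes are valid and yield the same coefficients $c_i = r_i/\log 2 \in \{\pm 1\}$. The paper's chain-theoretic argument is a bit more self-contained: it exhibits a bounding $2$--chain outright, so no appeal to the nondegeneracy of the intersection form is needed. Your route has the appeal of factoring through the already-established Proposition~\ref{prop:weak-homology} machinery, and you correctly identify the key insight that $\val = 3$ rigidifies the real coefficients onto $\{\pm\log 2\}$, making the density/rational-approximation step of the weak version unnecessary.
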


\begin{proof}
We will prove \eqref{i:choice sign} $\Rightarrow$ \eqref{i:all odd v} $\Rightarrow$ \eqref{i:lattice for odd v} $\Rightarrow$ \eqref{i:v equals 3} $\Rightarrow$ \eqref{i:choice sign}.

\medskip

\eqref{i:choice sign} $\Rightarrow$ \eqref{i:all odd v}: Suppose there exist coefficients $c_1, \ldots, c_n \in  \{ \pm 1\}$ such that $\sum c_i [h_i] = 0$. After reversing the orientation on some of the geodesics $h_i$, we may assume that $c_i = 1$ for all $i$. Choose any integer $\val \geq 2$, and let $b = \val -1$. Then, for every $i$, $\val$ is divisible by
$$b^{c_i} + 1 = b+1 = v.$$
Thus, by Corollary \ref{cor:stronger} above, there is a (simple, faithful) complex of finite groups $G(Y)$ over $Y$ such that the link of every local development is $\K$.

\medskip

\eqref{i:all odd v} $\Rightarrow$ \eqref{i:lattice for odd v}: Immediate from Proposition \ref{p:surface_subgroups} and Corollary \ref{c:lattice}.

\medskip

\eqref{i:lattice for odd v} $\Rightarrow$ \eqref{i:v equals 3}: Immediate by restricting \eqref{i:lattice for odd v} to $v=3$, and applying Corollary \ref{c:lattice}.

\medskip

\eqref{i:v equals 3} $\Rightarrow$ \eqref{i:choice sign}: Suppose that there is a complex of finite groups $G(Y)$ over $Y$, such that the link of every local development is $K_{3,3}$. Let $G_j$ be the local group on face $F_j$, and consider the $2$--chain $\Delta = \sum_{j=1}^\f (\log \abs{G_j}) \, F_j$, where $\abs{G_j}$ is the number of elements in $G_j$. Let $C$ be the real-valued $1$--chain $C = \bdy \Delta$. Then, for every oriented edge $e$ of the tiling, the coefficient of $e$ in $C$ is $\log \abs{G_j} - \log \abs{G_k}$, where $G_j$ is the group on the left and $G_k$ is the group on the right of $e$.
By Lemma \ref{l:parallel}  (parallel transport), every edge in a closed geodesic $h_i$ will have the same coefficient. Thus we may write $\bdy \Delta = \sum r_i h_i$, hence $\sum r_i [h_i] = 0 \in H_1(S, \R)$.

Let $G_e$ be the local group on edge $e$. Then, by Corollary \ref{c:v_thickness}  ($\val$--thickness),
$$[G_e: G_j] + [G_e: G_k] = 3,$$ which means one of the groups $G_j$ or $G_k$ is twice as large as the other. Thus
$$r_i = \log \abs{G_j} - \log \abs{G_k} = \pm \log 2.$$
By setting $c_i = r_i / \log 2$, we obtain coefficients $c_1, \ldots, c_n \in  \{ \pm 1\}$, such that $\sum c_i [h_i] = 0$.
\end{proof}

\begin{prop}\label{c:F not divisible by 4}  Suppose that $\f=\frac{8(g-1)}{\p - 4}$ is not divisible by $4$. Then
\begin{enumerate}
\item\label{i:no Gamma p 3 g} a lattice $\G_{\p,3,g}$ does not exist; and
\item\label{i:no Y all odd v} there is no tessellation $Y$ of $S_g$ by $\f$ copies of a regular right-angled hyperbolic $\pgon$ such that for every odd integer $\val \geq 3$, there is a lattice $\Gg$ with $\Gg \bs \I \cong Y$.
\end{enumerate}
\end{prop}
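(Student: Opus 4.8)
The plan is to reduce both assertions to a statement in $H_1(S_g;\Z)$ via Proposition~\ref{prop:strong-homology}, and then to prove that statement by a counting argument built around the ``staircase'' pattern that a bounding $2$--chain is forced to have. It suffices to prove: \emph{for every tessellation $Y$ of $S_g$ by $\f$ regular right-angled hyperbolic $\p$--gons there exist no signs $c_1,\dots,c_n\in\{\pm1\}$ with $\sum_i c_i[h_i]=0$.} Granting this, a lattice $\G_{\p,3,g}$ would have as quotient such a $Y$, whose induced complex of finite groups has all local developments $K_{3,3}$, so signs would exist by the implication \eqref{i:v equals 3}$\Rightarrow$\eqref{i:choice sign} of Proposition~\ref{prop:strong-homology}, a contradiction; and if some $Y$ admitted a lattice $\Gg$ for every odd $\val\ge3$, the case $\val=3$ again produces signs for $Y$. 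Thus \eqref{i:no Gamma p 3 g} and \eqref{i:no Y all odd v} both follow.

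So assume signs exist for such a $Y$. Reorienting some of the $h_i$, we may take all $c_i=1$, so that $\sum_i h_i=\bdy\Delta$ with $\Delta=\sum_{j=1}^{\f}d_jF_j$ an integral $2$--chain, all faces positively oriented. Since each edge of $Y$ lies on a unique geodesic and is traversed once, $\sum_i h_i=\sum_e e$ with each $e$ oriented by its geodesic; matching coefficients with $\bdy\Delta$ gives $d_{L(e)}-d_{R(e)}=1$ for every edge $e$, where $L(e),R(e)$ are the faces to the left and right of $e$. Now fix a vertex $\s$; its four incident edges form two opposite pairs, one pair on each geodesic through $\s$. For a geodesic $h$ through $\s$, the two faces lying counterclockwise--after its two edges at $\s$ sit on opposite sides of $h$ (a planar fact; here orientability of $S_g$ makes ``side of $h$'' meaningful), so the two jump equations along $h$ carry opposite signs, and likewise along the other geodesic; hence the multiset of the four coefficients $d_F$ at $\s$ is $\{t,\,t{+}1,\,t{+}1,\,t{+}2\}$ for some $t=t_\s\in\Z$. (If two of the four corners at $\s$ lie on one face, they are opposite corners, since no face is glued to itself along an edge, so that face carries one of the two ``$t{+}1$'' values and nothing changes.) In particular two of the four faces at $\s$ have $d_F$ even and two have $d_F$ odd, so a corner count gives exactly $\f/2$ faces with $d_F$ odd.

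The contradiction is then immediate from the identity $\sum_{x\in\{t,t+1,t+1,t+2\}}(-1)^x x=(-1)^t\bigl(t-2(t+1)+(t+2)\bigr)=0$, valid at every vertex: summing $(-1)^{d_F}d_F$ over all (corner, face) incidences in two ways,
\[
0=\sum_{\s}\ \sum_{\text{corners at }\s}(-1)^{d_F}d_F=\p\sum_{j=1}^{\f}(-1)^{d_j}d_j,
\]
since each $\p$--gon contributes $\p$ corners; hence $\sum_{d_j\text{ even}}d_j=\sum_{d_j\text{ odd}}d_j$. The left side is even, so the right side is too; but it is a sum of $\f/2$ odd integers, so $\f/2$ is even, i.e.\ $4\mid\f$, contradicting the hypothesis. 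I expect the only genuine work to be the vertex bookkeeping that pins down the staircase multiset from the four jump equations (and the use of orientability of $S_g$), which I would make explicit using the local configuration of Figure~\ref{fig:local-groups}; everything after that is a one-line count.
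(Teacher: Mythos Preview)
Your proof is correct. The reduction to Proposition~\ref{prop:strong-homology} and the analysis showing that the four corner--weights at each vertex form the multiset $\{t,t{+}1,t{+}1,t{+}2\}$ are exactly what the paper does (the paper calls such a vertex ``type $(t{+}1)$''). Where you diverge is in the final count.

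The paper finishes by letting $a_n$ be the number of vertices of type $(n)$ and $f_n$ the number of faces of weight $n$, observing the corner identity $a_{n-1}+2a_n+a_{n+1}=\p f_n$, and proving by induction on $n$ that $\p\mid a_n$ for every $n$; hence $\p$ divides the total number of vertices $V=\p\f/4$, forcing $4\mid\f$. Your route instead extracts two parity consequences of the staircase multiset: first, half the corner--values at each vertex are odd, giving $F_{\mathrm{odd}}=\f/2$ (already a contradiction if $\f$ is odd); second, the signed identity $\sum_{x\in\{t,t+1,t+1,t+2\}}(-1)^x x=0$ at each vertex, double--counted over corners, yields $\sum_{d_j\text{ even}}d_j=\sum_{d_j\text{ odd}}d_j$, whose right side is a sum of $\f/2$ odd integers and hence forces $\f/2$ to be even. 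This is a genuinely different and somewhat slicker endgame; the paper's induction, on the other hand, yields the finer information that each $a_n$ is divisible by $\p$.

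Two small points worth tightening in your write--up. First, the claim ``no face is glued to itself along an edge'' is not an ambient hypothesis for arbitrary tessellations $Y$; rather, it is forced here by $d_{L(e)}-d_{R(e)}=1$ (a self--glued edge would give $0=1$), so you should say so. Second, the sentence deriving the staircase multiset is terse; the clean way to phrase it is that the two faces across an edge of $h_i$ at $\sigma$ differ by $+1$ in the same direction (left minus right), and likewise for $h_j$, so the four values are $d,\ d{+}1,\ d{+}1,\ d{+}2$ as one walks around $\sigma$.
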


\begin{proof}  We prove the contrapositive.  If a lattice $\G_{\p,3,g}$ does exist, then the action of $\G_{\p,3,g}$ on $I_{\p,3}$ induces a complex of finite groups over the tiling $Y = \G_{\p,3,g} \bs I_{\p,3}$ of $S_g$, such that the link of every local development is $K_{3,3}$.  Similarly, if there exists a tessellation $Y$ of $S_g$ by $\f$ copies of a regular right-angled $\pgon$ such that for every odd integer $\val \geq 3$, there is a lattice $\Gg$ with $\Gg \bs \I \cong Y$, then for every odd integer $\val \geq 3$ there is a complex of finite groups $G(Y)$ over $Y$ such that the link of every local development is $\K$.

In both cases, by Proposition \ref{prop:strong-homology}  there exist choices of sign $c_i \in \pm 1$ such that $\sum c_i[h_i] = 0$, where $h_i$ are the geodesics of the tiling $Y$.  That is, there exists a $2$--chain $\Delta$ of faces with integer weights, such that the boundary $\partial\Delta$ is a sum of geodesics $h_i$ with coefficients $\pm 1$.

The weights on adjacent faces have to differ by $1$. Thus, for any vertex $\s$ of $Y$, there is some integer $n$ such that in the neighborhood of $\s$, there are two (opposite) faces of weight $n$, one face of weight $(n-1)$, and one face of weight $(n+1)$.  In this case, we will say that $\s$ is a \emph{vertex of type $(n)$}. Let $a_n$ be the total number of vertices of type $(n)$.

\begin{lemma}\label{l:a_n divisible by p} For every integer $n$, the number $a_n$ of vertices of type $(n)$ is divisible by $\p$.\end{lemma}

\begin{proof}[Proof of Lemma \ref{l:a_n divisible by p}] Let $f_n$ be the number of faces with weight $n$. Every vertex of every such face is a vertex of type $(n-1)$, or $(n)$, or $(n+1)$. Every vertex of type $(n)$ meets two corners of faces with weight $n$, whereas those of type $(n-1)$ or $(n+1)$ meet one corner of a face with weight $n$. Thus the total number of corners of faces with weight $n$ is
\begin{equation}\label{eq:face-corners}
a_{n-1} + 2 a_n + a_{n+1} = \p f_n.
\end{equation}

We may now prove the lemma by induction on $n$.  Since the surface $S_g$ is compact, there is a smallest $n$ such that $a_n$ is nonzero. Without loss of generality, suppose that the first nonzero $a_n$ is $a_1$. This is equivalent to the smallest weight on any face being $0$.

For the base case of the induction, consider equation \eqref{eq:face-corners} with $n=0$. By hypothesis, $a_{-1} = a_0 = 0$. Meanwhile, $\p f_0$ is clearly divisible by $\p$. Thus, by equation \eqref{eq:face-corners}, $a_1$ is divisible by $p$.

For the inductive step, assume that $a_{n-1}$ and $a_n$ are divisible by $\p$. Since the right-hand  side of \eqref{eq:face-corners} is divisible by $\p$, so is $a_{n+1}$. Thus, by induction, Lemma \ref{l:a_n divisible by p} is proved.
%
%
\end{proof}

An immediate consequence of Lemma \ref{l:a_n divisible by p} is that the total number of vertices in the tessellation $Y$ is divisible by $\p$. But since every vertex is $4$--valent and every face has $\p$ sides, it follows that the number of faces in $Y$ is divisible by $4$.  That is, $\f \equiv 0 \pmod 4$.
\end{proof}

\section{Proof of the Main Theorem}\label{s:proof}

In this section, we apply results from Sections \ref{s:group theory}--\ref{s:homology} above to prove the Main Theorem, along with  Corollary \ref{cor:gromov-special-case}.  We begin the argument in Section \ref{ss:construct tiling}, by constructing particular tessellations of a surface $S$, which will satisfy the homological conditions of Propositions \ref{prop:weak-homology} or \ref{prop:strong-homology}. In Section \ref{ss:existence proof}, we use these tessellations to prove the existence results \eqref{i:existence} of the Main Theorem, along with  Corollary \ref{cor:gromov-special-case}. In Section \ref{ss:nonexistence proof}, we prove the non-existence results \eqref{i:non-existence} of the Main Theorem.


\subsection{Constructing tessellations}\label{ss:construct tiling}
Let $\p \geq 5$, $\val \geq 2$, and $g \geq 2$ be integers, and assume that $\f = \frac{8(g-1)}{p - 4}$ is a positive integer.  Let $S_g$ be a compact orientable surface of genus $g$. Then a theorem of Edmonds--Ewing--Kulkarni states that there exists some tiling $Y$ of $S_g$ by $\f$ regular right-angled hyperbolic $\p$--gons \cite{EEK}. However, our goal is to construct a particular tiling, such that the closed geodesics of $Y$ will satisfy our homology conditions from Section \ref{s:homology}. 

For every $\p \geq 5$, let $\HH^2_p$ be a rescaled copy of the hyperbolic plane $\HH^2$, in which a regular right-angled $\p$--gon has sidelength exactly $1$. In the following construction, it will be convenient to work with $\p$--gons modeled on $\HH^2_p$ rather than on $\HH^2$. This way, after several polygons are glued together to form a surface with geodesic boundary, we can measure the length of the boundary in terms of units of sidelength.

\begin{prop}\label{p: tiling f divisible by 4}
Let $\p \geq 5$, and suppose that  $\f = \frac{8(g-1)}{p-4}$ is divisible by $4$. Then the surface $S_g$ admits a tiling $Y$ by $\f$ copies of a regular right-angled $\p$--gon, such that the geodesics $h_1, \ldots, h_n$ of this tiling satisfy $\sum [h_i] = 0 \in H_1(S_g)$.
\end{prop}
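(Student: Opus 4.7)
My strategy is to construct $Y$ so that it admits a proper $2$-coloring of its faces---a ``checkerboard'' coloring in which any two faces sharing an edge receive different colors---and then deduce $\sum [h_i] = 0$ from this coloring. The reduction is direct: if $B := \sum_{F \text{ black}} F$ denotes the $2$-chain of positively oriented black faces, then since every edge of $Y$ separates a black face from a white face, each edge appears with coefficient $\pm 1$ in $\partial B$. The edges composing each closed geodesic $h_i$ form a consistent cycle; after orienting $h_i$ so that the black region lies on its left, the edges of $h_i$ appear in $\partial B$ with the same signs as in $h_i$ itself. Summing over all $i$ yields $\partial B = \sum_i h_i$ in $C_1(S_g)$, hence $\sum [h_i] = [\partial B] = 0$ in $H_1(S_g,\mathbb{Z})$.

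It remains to construct a checkerboard-colorable tiling of $S_g$ by $\f$ right-angled $\p$-gons. Since $\f = 4m$, a natural construction assembles $S_g$ from $m$ identical building blocks, each a topological disk $D$ tiled by four right-angled $\p$-gons meeting at a central vertex in a pinwheel, colored black-white-black-white cyclically around that vertex. The boundary of $D$ has $4(\p-2)$ edges. One glues $m$ such blocks together pairwise along boundary edges, subject to two constraints: each identified edge must join a black face to a white face (so the coloring extends globally), and the resulting closed surface must have genus $g$. The assumption $4 \mid \f$ guarantees that after gluing, the vertex and edge counts have the parity needed for Euler's formula $V - E + F = 2 - 2g$ to be consistent; in particular, the total vertex count $V = \p\f/4$ is an integer of the right form.

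The main obstacle is verifying that the gluing can be arranged to realize exactly $S_g$ for every admissible $g$, while preserving the checkerboard condition at every identified edge. A conceptually cleaner alternative is a descent argument: the regular right-angled $\p$-gon tessellation of $\HH^2_\p$ is manifestly checkerboard-colorable, since its dual graph is bipartite---it is generated by the $4$-cycles around each tessellation vertex, all of even length, and any loop bounds a disk in $\HH^2_\p$---and this $2$-coloring is preserved by every orientation-preserving isometry of the tessellation (reflections swap colors, whereas rotations, as even compositions of reflections, preserve them). Hence $Y$ may be realized as $\HH^2_\p / \G$ for a suitable torsion-free Fuchsian subgroup $\G$ of orientation-preserving automorphisms of the tessellation. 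Such a $\G$ can be exhibited with the required covolume and genus either via residual finiteness of Fuchsian groups combined with the Edmonds--Ewing--Kulkarni existence theorem, or by an explicit subgroup construction exploiting the divisibility hypothesis $4 \mid \f$; in either approach, the resulting quotient tiling is automatically checkerboard-colorable, completing the proof.
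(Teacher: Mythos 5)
Your reduction in the first paragraph does not work. Fix a vertex $v$ of the tiling. The four faces $F_1, F_2, F_3, F_4$ around $v$, listed in cyclic order, are colored black, white, black, white in any checkerboard coloring. Each of the two geodesics through $v$ separates a pair of \emph{cyclically adjacent} faces from the opposite pair; for instance one geodesic separates $\{F_1, F_2\}$ from $\{F_3, F_4\}$. So the two faces lying on a given side of a geodesic at a vertex have \emph{opposite} colors, and as you walk along a geodesic $h_i$ the color on (say) your left alternates at every vertex you pass. There is therefore no orientation of $h_i$ that ``puts black on the left,'' and in fact the edges of $h_i$ appear in $\partial B$ with \emph{alternating} signs, not consistent ones. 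A cleaner way to see the failure: the black faces of a checkerboard coloring are pairwise non-adjacent, so $\partial B = \sum_{F\ \mathrm{black}} \partial F$ decomposes into $\p$-gon boundaries, not into the geodesics at all. Hence checkerboard-colorability of $Y$ does not imply $\sum [h_i] = 0$, and the Fuchsian-group fall-back, which is invoked only to produce a checkerboard-colorable quotient, inherits exactly the same gap.

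The condition you actually need is strictly stronger. As in the proof of Proposition \ref{prop:strong-homology}, one requires an integer weight function $d$ on the faces that jumps by exactly $1$ across every edge \emph{with a sign that is constant along each entire geodesic} (equivalently, at each vertex the weights read $n,\,n{+}1,\,n,\,n{-}1$ cyclically, with the two opposite faces sharing the middle value). Reducing such a $d$ modulo $2$ does yield a checkerboard coloring, but the converse direction — lifting a $\Z/2$-coloring to such a sign-coherent $\Z$-weighting — fails in general; indeed Proposition \ref{c:F not divisible by 4} shows a sign-coherent $d$ forces $\f \equiv 0 \pmod 4$, while checkerboard-colorable tilings with $\f \equiv 2 \pmod 4$ exist. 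The paper therefore constructs the tiling explicitly (pairing $\pgons$ along one distinguished edge color, doubling along a second color to form holed spheres, and cyclically gluing the holed spheres along a third color) so that the geodesics of each color visibly bound a $2$-chain, and verifies $\sum [h_i]=0$ one color at a time; a cleaner general principle of the kind you propose would need the sign-coherent weighting, not merely a $2$-coloring.
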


\begin{proof}
We will consider two cases: $\p$ odd and $\p$ even. In each case, we will construct a closed surface out of $\f$ copies of a regular right-angled $\p$--gon. So long as all the gluings reverse orientation (i.e., so long as outward normals to a polygon are glued to inward normals), the resulting surface will be orientable. By Euler characteristic considerations, this surface will be of genus $g$, where $(g-1) = \f(p-4)/8$.

\medskip

\noindent \emph{\underline{Case 1: $\p$ odd.} }
If $\p$ is odd and $\f = \frac{8(g-1)}{p-4}$ is an integer, then $\f$ must actually be divisible by $8$. We begin by coloring the edges of each $\pgon$ as follows: one edge is colored red, while the remaining $\p-1$ edges are colored alternately yellow and black. Since $\p$ is odd, each polygon has $(p-1)/2$ black edges, and the same number of yellow edges. The gluing will respect these colors.

To begin the construction, we glue the $\pgons$ in pairs, along their (unique) red edges. We now have $\f/2$ copies of a right-angled $q$--gon, where $q=2\p-4$. The sides of each $q$--gon are colored  alternately yellow and black, with one yellow edge of length $2$ and another, opposite black edge of length $2$ (in the metric of $\HH^2_p$). 

The next step of the construction is to glue the $q$--gons in pairs along all of their black sides, by ``super-imposing" one $q$--gon above another.  The result is $\f/4$ copies of a $(q/2)$--holed sphere, where all boundary circles are yellow. One boundary circle will have length $4$, while the remaining $(q/2 - 1) = (\p-3)$ boundary circles have length $2$. (See Figure \ref{fig:sphere-matching}.) Arrange these $\f/4$ spheres cyclically, and number them $1$ through $\f/4$. Notice that $\f/4$ is even. For each odd-numbered sphere, glue the long boundary circle (of length 4) to the corresponding boundary circle of the next sphere. Glue the $\p-3$ short boundary circles (of length $2$) to the corresponding circles of the previous sphere. This gluing gives a  tessellation $Y$ of the surface $S_g$.

\begin{figure}
\begin{center}
\begin{overpic}{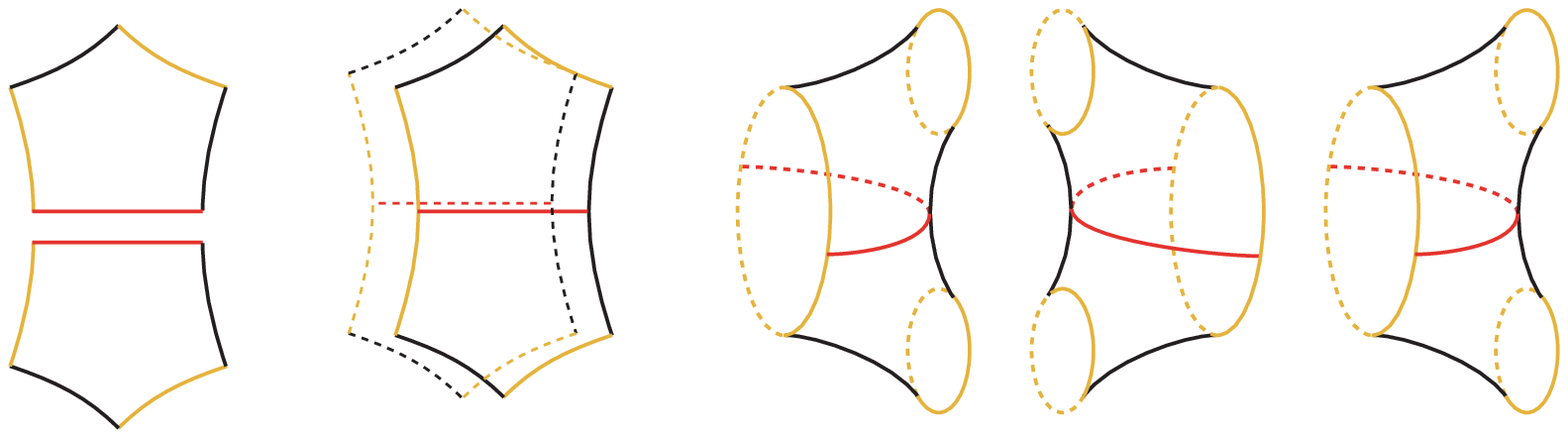}
\put(12,25){(A)}
\put(37,25){(B)}
\put(75,25){(C)}
\end{overpic}
\caption{Three steps in the construction of a tiling, for odd $\p$. (A) Match $\pgons$ along red edges to form $(2\p-4)$--gons. (B) Double along black edges to form $(p-2)$ holed spheres. (C) Glue cuffs of spheres along yellow edges to form closed surface. In this example, $p=5$.}
\label{fig:sphere-matching}
\end{center}
\end{figure}

The edges of $Y$ can be partitioned into embedded closed geodesics, as follows. The red edges form $\f / 8$ closed geodesics, each of which has length 4. When we cut $Y$ along these geodesics, the result is two connected components: these are the top and bottom halves in Figure \ref{fig:sphere-matching}(C). Thus the red geodesics can be oriented so that their sum is $0$ in $H_1(S_g)$. Meanwhile, the black edges also form several disjoint geodesics that cut $Y$ into two connected components: these are the front and back halves in Figure \ref{fig:sphere-matching}(C). Thus the black geodesics can also be oriented so that their sum is $0$ in $H_1(S_g)$. 

Finally, the yellow edges form several disjoint closed geodesics that cut $Y$ into the $\f /4$ copies of a $(q/2)$--holed sphere that we had before the last step of the construction. If we orient each yellow geodesic as the oriented boundary of an odd-numbered holed sphere, then the sum of all the yellow geodesics bounds the sum of the odd-numbered holed spheres. Thus the sum of all the geodesics is homologically trivial, as desired.

\medskip

\noindent \emph{\underline{Case 2: $\p$ even.} } The construction in this case is nearly the same as for $\p$ odd, except that we skip the very first gluing along red edges. First, we color the edges of each $\pgon$ alternately yellow and black. Second, we glue the $\p$--gons in pairs along all of their black sides, by ``super-imposing" one $\p$--gon above another. The result is $\f/2$ copies of a $(\p/2)$--holed sphere, where all boundary circles are yellow and have length 2. Arrange these $\f/2$ spheres cyclically, and number them $1$ through $\f/2$. Notice that by hypothesis, $\f/2$ is even.  For each odd-numbered sphere, glue one boundary circle to the next sphere and the remaining $(\p/2 -1)$ boundary circles to the previous sphere. This gluing gives a  tessellation $Y$ of the surface $S_g$.

As before, the edges of $Y$ can be partitioned into embedded closed geodesics.  The black edges form several disjoint geodesics that cut $Y$ into two connected components: these are the front and back halves in the ring of holed spheres. Thus the black geodesics can also be oriented so that their sum is $0$ in $H_1(S_g)$. Meanwhile, if we orient each yellow geodesic as the oriented boundary of an odd-numbered holed sphere, then the sum of all the yellow geodesics is also $0$ in $H_1(S_g)$.
\end{proof}

\begin{prop}\label{p: tiling f composite}
Let $\p \geq 5$, and suppose that  $\f = \frac{8(g-1)}{p-4}$ is a composite integer. Then the surface $S_g$ admits a tiling $Y$ by $\f$ copies of a regular right-angled $\p$--gon, such that the geodesics $h_1, \ldots, h_n$ of this tiling satisfy $\sum c_i [h_i] = 0 \in H_1(S_g)$, with coefficients $c_i \in \{1, 2\}$.
\end{prop}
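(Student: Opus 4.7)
Plan: I would extend the cyclic-block construction from Proposition \ref{p: tiling f divisible by 4}. If $\f \equiv 0 \pmod 4$, that proposition already yields a tiling with $\sum [h_i] = 0$, so all $c_i = 1 \in \{1, 2\}$ and we are done. Hence I may assume $\f$ is composite with $\f \not\equiv 0 \pmod 4$. The key idea is to allow face-coefficients in $\{0, 1, 2\}$ (rather than $\{0, 1\}$), producing a 2-chain $\Delta$ whose boundary has geodesic coefficients in $\{\pm 1, \pm 2\}$; reorienting the geodesics then yields $c_i \in \{1, 2\}$ with $\sum c_i [h_i] = 0$.

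For the first subcase, $\f \equiv 2 \pmod 4$ (so $\f = 2k$ with $k$ odd), the identity $\f = 8(g-1)/(\p-4)$ forces $\p \equiv 0 \pmod 4$. I would pair the $\pgons$ along black edges as in Proposition \ref{p: tiling f divisible by 4}, producing $k$ copies of a $(\p/2)$-holed sphere. I would arrange these $k$ spheres in a cycle with ``balanced'' cuff-gluings: each sphere sends $\p/4$ cuffs forward and $\p/4$ cuffs backward (possible since $\p/4 \in \Z$), and this gluing pattern closes up consistently for any $k$, including odd $k$. Cuff identifications are chosen so that the ``Front''-edge of each cuff is glued to the ``Front''-edge of the neighboring sphere's cuff, preserving orientability and producing a well-defined global notion of Front. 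Then I assign sphere-coefficients $d_1, \ldots, d_k = 0, 1, 0, 1, \ldots, 0, 1, 2$, so that $|d_{i+1} - d_i| \in \{1, 2\}$ for all $i$ modulo $k$. The 2-chain $\Delta$ with face-coefficients $d_j = \chi_{\mathrm{Front}}(F_j) + d_{S_j}$ has coefficient $\pm 1$ across each black edge (within a block) and $\pm(d_{S_i} - d_{S_{i+1}}) \in \{\pm 1, \pm 2\}$ across each yellow edge on the cuff between $S_i$ and $S_{i+1}$. Since these coefficients are constant along each closed geodesic, we obtain $\partial \Delta = \sum c_i h_i$ with each $c_i \in \{1, 2\}$, as desired.

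The main obstacle is the second subcase, $\f$ odd composite. Here $\f$ odd forces $\p \equiv 4 \pmod 8$, so $\p/2$ is odd and the pair-along-black construction fails. Writing $\f = ab$ with $a, b \geq 3$ both odd, I would build $a$ ``blocks'', each containing $b$ right-angled $\pgons$ glued into a planar piece with geodesic boundary, and arrange the blocks cyclically. Assigning block-coefficients $0, 1, 0, 1, \ldots, 0, 1, 2$ around the cycle of length $a$, combined with carefully chosen intra-block face-coefficients in $\{0, 1, 2\}$, should yield face-coefficients whose differences across every edge of the tessellation lie in $\{\pm 1, \pm 2\}$. Explicitly constructing such blocks for any odd $b$, and verifying that the cyclic gluing produces a closed orientable surface of the correct genus $g$ equipped with the desired 2-chain, is the main technical difficulty of the proof.
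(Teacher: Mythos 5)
Your strategy — allow face weights in $\{0,1,2\}$ so that the boundary $2$--chain has coefficients in $\{\pm 1, \pm 2\}$ — is exactly the right idea, and your observation that $\f \equiv 0 \pmod 4$ is already covered is correct.  For $\f \equiv 2 \pmod 4$ your modified holed-sphere construction (balanced cuff gluing $\p/4$ forward, $\p/4$ backward, Front glued to Front, sphere weights $0,1,0,\ldots,1,2$ around an odd cycle) looks sound, though it differs from the paper's construction.  However, there is a genuine gap in the remaining case, and a factual error along the way.

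First, the small error: when $\f$ is odd, $\f(\p-4) = 8(g-1)$ forces $\p \equiv 4 \pmod 8$, so $\p/2 \equiv 2 \pmod 4$ is \emph{even}, not odd as you claim; the actual reason the pair-along-black-edges step fails is simply that $\f$ is odd, so the $\pgons$ cannot be grouped into pairs at all.  More importantly, for the odd composite case you describe a plan (write $\f = ab$, build $a$ blocks of $b$ polygons each) but explicitly declare the construction of the blocks and the verification of genus and the $2$--chain to be ``the main technical difficulty,'' leaving it unresolved.  That is the heart of the proposition.  The paper instead gives a single unified construction covering both $\f \equiv 2 \pmod 4$ and $\f$ odd composite: it factors $\f = xy$ with $x,y \geq 2$ and $y$ odd, draws each $\pgon$ as a long rectangular jigsaw piece with two vertical black edges, horizontal yellow edges, and $(\p-4)/4$ semicircular red scoops on each of the top and bottom, arranges them in an $x \times y$ cyclic grid (gluing black to black horizontally and yellow to yellow vertically) to form a torus with $\f(\p-4)/4$ red-bounded holes, and then plugs the holes in pairs to add handles (with two subcases depending on $\p \bmod 8$).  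The two $2$--chains $\Delta_y$ and $\Delta_x$, constant on rows (resp.\ columns) with weights $[2,0,1,0,\ldots,1]$, then give the yellow and the black/red geodesics coefficients in $\{1,2\}$.  This grid construction is what you would need to discover to complete your odd-$\f$ case; as written your proof is incomplete there.
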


Note that when $\f$ is not divisible by $4$, Proposition \ref{c:F not divisible by 4} implies that there does not exist a tiling $Y$ whose geodesics   satisfy $\sum [h_i] = 0$. Thus finding weights of the form $c_i \in \{1, 2\}$ can be seen as a ``best possible'' outcome when $\f$ is not divisible by $4$.

\begin{proof}
If $\f$ is divisible by $4$, then the result is true by Proposition \ref{p: tiling f divisible by 4}. Thus we may assume that $\f \equiv 2 \pmod 4$, or $\f$ is odd. In either case, since $\f(p-4) = 8(g-1)$ is divisible by 8, $\p$ must be divisible by $4$. In particular, since $p \geq 5$ and is divisible by $4$, we actually know that $p \geq 8$.

As in Proposition \ref{p: tiling f divisible by 4}, we begin by coloring the edges of each $\pgon$. Initially, we color the edges alternately yellow and red. Then, we choose two opposite red edges and paint them black. For the duration of the construction, we will draw each $\pgon$ as a concave jigsaw puzzle piece, in the shape of a long rectangle with $(\p-4)/4$ semi-circular scoops taken out of the top side and  $(\p-4)/4$ semi-circular scoops taken out of the bottom side. The two left and right vertical edges are colored black, the horizontal edges are all colored yellow, and the semi-circular scoops are all colored red. See Figure \ref{fig:torus-matching}.

\begin{figure}
\begin{center}
\begin{overpic}{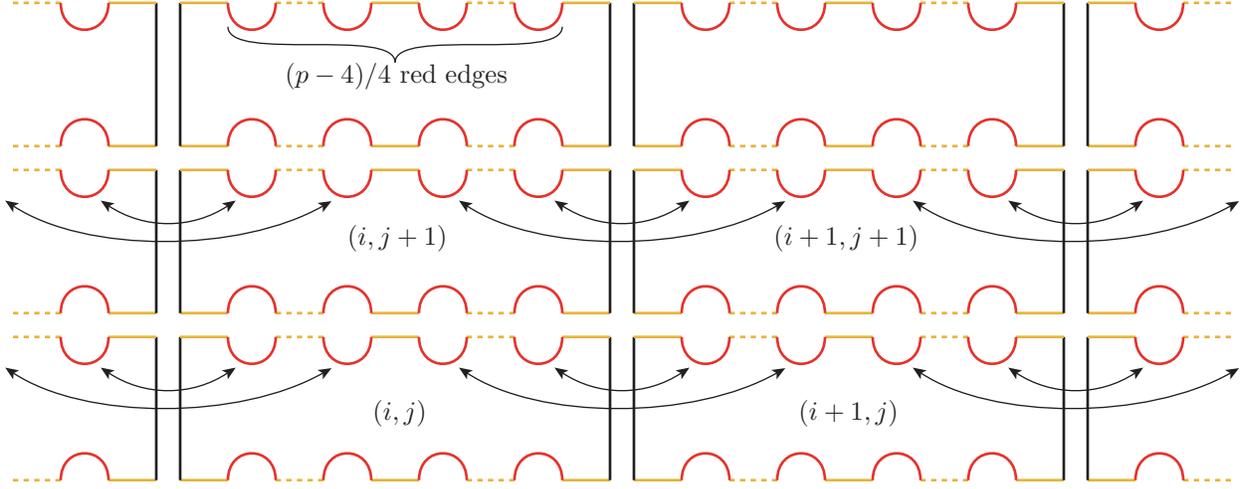}
\put(23,32){$(\p-4)/4$ red edges}
\put(30,5){$(i,j)$}
\put(28,19){$(i,j+1)$}
\put(64,5){$(i+1,j)$}
\put(62,19){$(i+1,j+1)$}
\end{overpic}
\caption{$\f = xy$  right-angled $\pgons$ are arranged into $x$ columns and $y$ rows to form a torus with $F(\p-4)/4$ holes. Then, the holes are glued in pairs.}
\label{fig:torus-matching}
\end{center}
\end{figure}

Choose positive integers $x$ and $y$ such that $xy=\f$. For the moment, to construct a tessellation, we do not yet require $\f$ to be composite: thus one or both of $x,y$ may equal $1$. Since $\f$ is not divisible by 4, we may adopt the convention that $y$ is always odd. Hence $\f$ is even if and only if $x$ is even.

We construct a tiling as follows. First, arrange the long jigsaw puzzle pieces into $x$ columns and $y$ rows. Number the columns cyclically $1, 2, \ldots, x$ and the rows cyclically $1, 2, \ldots, y$. Then we glue the adjacent black edges: the right side of the polygon at position $(i,j)$ is glued to the left side of the polygon at position $(i+1, j)$. Similarly, we glue the adjacent yellow edges: each horizontal edge on top of  the polygon at position $(i,j)$ is glued to the corresponding horizontal edge on the bottom of the polygon at position $(i, j+1)$. Here, the horizontal indices are taken modulo $x$, and the vertical indices are taken modulo $y$.  The result of this gluing is a torus with $\f(\p-4)/4$ holes, where the boundary of each hole is a red geodesic of length $2$.

To match up the red geodesics, we consider two cases: $\p \equiv 4 \pmod 8$ and $\p \equiv 0 \pmod 8$.

If $\p \equiv 4 \pmod 8$, then $(\p-4)/4$ must be even. Hence the top of each jigsaw puzzle piece has an even number of red edges, as does the bottom of each piece. Thus we may glue the right-most $(\p-4)/8$ red edges on top of the polygon  at position $(i,j)$ to the left-most $(\p-4)/8$ red edges on top of the polygon at position $(i+1,j)$. We perform the identical gluing for the red edges on the bottom of each jigsaw piece. In each row, this gluing creates   $(\p-4)/8$ ``handles'' between the $j^\mathrm{th}$ column and the $(j+1)^\mathrm{st}$ column, with the cross-section of each handle being a red geodesic of length $2$. In particular, we obtain a closed orientable surface by adding $\f(\p-4)/8$ handles to a torus, hence the genus is $g = 1 + \f(\p-4)/8$, as desired.

If $\p \equiv 0 \pmod 8$, then $(\p-4)/4$ must be odd. Thus, since $g-1 = \f(p-4)/8 $ is an integer, $\f$ must be even. By convention, this means that $x$ (the number of columns) is even. For each jigsaw puzzle piece at position $(i,j)$, where $i$ is even, we glue the right-most $\p/8$ red edges on top of that polygon to the left-most $\p/8$ red edges on top of the polygon at position $(i+1,j)$. We glue the left-most $(\p-8)/8$ red edges on top of that polygon to the right-most $(\p-8)/8$ red edges on top of the polygon at position $(i-1,j)$. We perform the identical gluing for the red edges on the bottom of each jigsaw piece. As above, the effect of this gluing is to add $\f(\p-4)/8$ handles to a torus, with each handle connecting consecutive columns.

Observe that this construction of a tiling did not require $\f$ to be composite. However, this hypothesis will be used to assign weights to geodesics, in order to satisfy the desired homology condition. For the rest of the proof, we \emph{do} assume that $\f$ is composite, and that $x,y \geq 2$. 

Consider the $2$--chain $\Delta_y$ obtained as a sum of faces with the following weights. Every jigsaw puzzle piece 
at position $(i,1)$ receives a weight of $2$. For indices $j > 1$, every jigsaw piece 
at position $(i,j)$ receives a weight of $(j \mod 2)$.  Thus all the tiles in the same row have the same weight. Since $y$ is odd, the sequence of weights on different rows is $[2,0,1,0, \ldots, 1]$; thus the tiles in adjacent rows have weights that differ by $1$ or $2$. The boundary $\bdy \Delta_y$ is a weighted sum of edges that separate adjacent rows -- that is, a weighted sum of the yellow geodesics.   (This is where we use the hypothesis that $y > 1$; otherwise, there would be only one row, and the weight on the yellow geodesics would be $0$.) Because the weights on adjacent rows differ by $1$ or $2$, all the yellow geodesics (oriented appropriately) have a weight of $1$ or $2$. Thus the sum of yellow geodesics, with these weights, is $0$ in $H_1(S_g)$.

In a similar fashion, consider the $2$--chain $\Delta_x$ obtained as follows. Every jigsaw puzzle piece 
at position $(1,j)$ receives a weight of $2$. For indices $i > 1$, every jigsaw piece 
at position $(i,j)$ receives a weight of $(i \mod 2)$.  Thus all the tiles in the same column have the same weight. Because $x > 1$, the tiles in adjacent columns have weights that differ by $1$ or $2$. The boundary $\bdy \Delta_x$ is a weighted sum of edges that separate adjacent columns -- that is, a weighted sum of the black and red geodesics. Because the weights on adjacent columns differ by $1$ or $2$, all the black and red geodesics (oriented appropriately) have a weight of $1$ or $2$. We have now assigned a weight of $1$ or $2$ to all the geodesics of the tiling, such that their sum, with these weights, is trivial in $H_1(S_g)$.
\end{proof}

As a byproduct of the above construction, we recover the following result, due to Edmonds--Ewing--Kulkarni \cite{EEK}.

\begin{corollary}\label{c:tiling-exists}
Let $\f \geq 1$, $\p \geq 5$, and $g \geq 2$ be integers. Then the closed orientable surface $S_g$ of genus $g$ admits a tiling by $\f$ regular right--angled $\pgons$ if and only if
$$\f (\p - 4) = 8 (g-1).$$
\end{corollary}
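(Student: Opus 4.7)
The plan is to prove the two directions separately, both by elementary means.

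For the ``only if'' direction, I would use an Euler characteristic computation. A tiling $Y$ by $F$ regular right-angled $\pgons$ has $F$ faces; each edge is shared by exactly two polygons, giving $E = F\p/2$; and because every interior angle of the tiles is $\pi/2$ while the total angle around a vertex is $2\pi$, exactly four polygons meet at each vertex, giving $V = F\p/4$. Substituting into $\chi(S_g) = V - E + F = 2 - 2g$ yields
\[ 2 - 2g \;=\; \frac{F\p}{4} - \frac{F\p}{2} + F \;=\; \frac{F(4-\p)}{4}, \]
which rearranges to $F(\p-4) = 8(g-1)$.

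For the ``if'' direction, I would simply observe that the explicit constructions given in the proofs of Propositions~\ref{p: tiling f divisible by 4} and~\ref{p: tiling f composite} already produce a suitable tessellation for every admissible $(F,\p,g)$. Specifically, the jigsaw-piece construction described in the proof of Proposition~\ref{p: tiling f composite} takes as input any factorization $F = xy$ with the single parity convention that $y$ be odd, and it assembles $F$ right-angled $\pgons$ into the closed orientable surface of genus $1 + F(\p-4)/8 = g$. Its hypothesis that $F$ be composite is used \emph{only} for the subsequent weight assignment to trivialize $\sum c_i[h_i]$ in homology; the tiling itself exists for any positive integer $F$ (including $F=1$ or $F$ prime), by taking the trivial factorization or $x = F$, $y = 1$.

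The main obstacle, if any, is simply a careful check that the parity convention of Proposition~\ref{p: tiling f composite} (``$y$ is odd, and when $\p \equiv 0 \pmod 8$ one has $x$ even'') can be satisfied in all admissible cases. This is a short case analysis on $\p \bmod 8$, using only the divisibility identity $F(\p-4) = 8(g-1)$: when $\p \equiv 4 \pmod 8$ any $x,y$ with $xy = F$ and $y$ odd works (e.g.\ $y=1$), and when $\p \equiv 0 \pmod 8$ the identity forces $F$ to be even, so we may take $x = F$ even and $y = 1$ odd. Thus the construction covers every triple $(F,\p,g)$ satisfying $F(\p-4) = 8(g-1)$, completing the proof.
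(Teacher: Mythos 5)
Your ``only if'' direction is exactly the paper's argument (the same Euler characteristic count), and it is correct.

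Your ``if'' direction has a genuine gap in the case analysis. The jigsaw-piece construction of Proposition~\ref{p: tiling f composite} only makes sense when $\p$ is divisible by $4$: the coloring scheme (alternate yellow/red, then two opposite red edges repainted black) and the jigsaw shape with $(\p-4)/4$ scoops on top and bottom both require $4 \mid \p$. Your ``short case analysis on $\p \bmod 8$'' considers only $\p \equiv 0 \pmod 8$ and $\p \equiv 4 \pmod 8$ (i.e.\ $\p \equiv 0 \pmod 4$) and then asserts that ``the construction covers every triple $(F,\p,g)$ satisfying $F(\p-4) = 8(g-1)$,'' which is false as stated. When $\p$ is odd or $\p \equiv 2 \pmod 4$, the jigsaw construction does not apply at all.

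The fix is short but must be said: in those remaining cases the hypothesis $\f(\p-4) = 8(g-1)$ forces $\f \equiv 0 \pmod 4$ (because the $2$--adic valuation of $\p - 4$ is then $\leq 1$), and one invokes Proposition~\ref{p: tiling f divisible by 4} instead. You do gesture at this by citing both propositions in your opening sentence, but your subsequent argument elaborates only on the jigsaw construction and presents it as sufficient in all cases, which it is not. Note that this is exactly the dichotomy the paper uses in its own proof: split on whether $\f$ is divisible by $4$, using Proposition~\ref{p: tiling f divisible by 4} when it is, and Proposition~\ref{p: tiling f composite} (with the observation that the tiling itself does not need $\f$ composite) when it is not.
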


\begin{proof}
$(\Rightarrow)$ Suppose that $S_g$ admits a tiling by $\f$ regular right--angled $\pgons$. The number of edges in this tiling is $E = \f \p / 2$, and the number of vertices is $V = \f \p /4$, because every vertex is $4$--valent. Thus
$$-2(g-1) \: = \: \chi(S_g) \: = \: \frac{\f \p}{4} - \frac{ \f \p} {2} + \f,$$
which simplifies to the desired equation.

$(\Leftarrow)$ For the converse, suppose that $\f (\p - 4) = 8 (g-1)$. If $\f$ is divisible by $4$, then Proposition \ref{p: tiling f divisible by 4} constructs a tiling of $S_g$ by $\f$ right-angled $\pgons$. If $\f$ is not divisible by $4$, then Proposition \ref{p: tiling f composite} constructs a tiling of $S_g$ by $\f$ right-angled $\pgons$. Recall that the construction of the tiling in Proposition \ref{p: tiling f composite} did not use the hypothesis that $\f$ is composite.
\end{proof}

\subsection{Existence of lattices}\label{ss:existence proof}
The following result gives parts \eqref{i:existence v even}, \eqref{i:existence f divisible by 4}, and \eqref{i:existence f composite} of the Main Theorem.

\begin{theorem}\label{thm:existence}
Let $\p \geq 5$, $\val \geq 2$, and $g \geq 2$ be integers, and let $\I$ be Bourdon's building. Let $\f = \frac{8(g-1)}{p - 4}$.
\begin{enumerate}
\item[(a)] If $\val \geq 2$ is even, then for all integers $\f$, a lattice $\Gg$ exists.
\item[(b)] If $\f$ is divisible by $4$, then for all integers $\val \geq 2$, a lattice $\Gg$ exists.
\item[(c)] If $\f$ is composite, and $\val$ is divisible by $(b+1)(b^2 +1)$, where $b$ is a positive even number, then  a lattice $\Gg$ exists.
\end{enumerate}
\end{theorem}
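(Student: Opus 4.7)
The plan is to prove the three parts of Theorem \ref{thm:existence} separately, each combining one of the tessellation constructions from Section \ref{ss:construct tiling} with the correspondence (Corollary \ref{c:lattice}) between uniform lattices in $\Aut(\I)$ and faithful complexes of finite groups whose local developments have links $\K$. For part (b), when $\f \equiv 0 \pmod 4$, I would apply Proposition \ref{p: tiling f divisible by 4} to obtain a tessellation $Y$ of $S_g$ whose closed geodesics satisfy $\sum [h_i] = 0$ in $H_1(S_g)$. The implication (i)$\Rightarrow$(ii) of Proposition \ref{prop:strong-homology} then produces, for \emph{every} $\val \geq 2$, a uniform lattice $\Gg$ with $\Gg \bs \I \cong Y$.

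For part (c), when $\f$ is composite and $\val$ is divisible by $(b+1)(b^2+1)$ for some positive even $b$, I would apply Proposition \ref{p: tiling f composite} to obtain a tessellation $Y$ whose geodesics satisfy $\sum c_i [h_i] = 0$ with coefficients $c_i \in \{1,2\}$. Since $\val$ is divisible by $b^1 + 1$ and by $b^2 + 1$, it is divisible by $b^{c_i} + 1$ for every $i$, so Corollary \ref{cor:stronger} yields a lattice $\Gg$ with $\Gg \bs \I \cong Y$. The evenness of $b$ ensures $(b+1)(b^2+1)$ is odd, delivering the odd values of $\val$ advertised in the Main Theorem.

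For part (a), when $\val$ is even, the homological machinery of Propositions \ref{prop:weak-homology} and \ref{prop:strong-homology} is formulated for odd $\val$, so I would instead construct a complex of finite groups directly. By Corollary \ref{c:tiling-exists}, fix any tessellation $Y$ of $S_g$ by $\f$ right-angled $\pgons$. Label its closed geodesics $h_1, \ldots, h_n$, and for each $i$ fix an abstract cyclic group $H_i \cong \Z/(\val/2)$. Define a simple complex of groups $G(Y)$ with trivial face groups, local group $H_i$ on each edge of $h_i$, and local group $H_i \times H_j$ at each vertex where $h_i$ and $h_j$ cross, with edge-into-vertex monomorphisms given by coordinate inclusions into the appropriate factor. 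In the notation of Corollary \ref{c:link_K}, at each vertex $V = E_1 E_2 = E_2 E_3 = E_3 E_4 = E_4 E_1$, each consecutive pair intersects in the trivial face group, and $|V:E_1| + |V:E_3| = \val/2 + \val/2 = \val = |V:E_2| + |V:E_4|$. Thus every local development has link $\K$, and since all face groups are trivial, $G(Y)$ is faithful; Corollary \ref{c:lattice} then produces the required lattice. The only subtlety, and the main potential obstacle, is verifying that the independently chosen groups $H_i$ assemble into a well-defined simple complex of groups; but since such a complex is specified by its local groups together with compatible inclusions along each incidence $\tau \subset \sigma$, this reduces to the link check already carried out.
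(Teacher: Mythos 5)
Your proposal is correct and follows essentially the same route as the paper's own proof: part (b) via Proposition~\ref{p: tiling f divisible by 4} and Proposition~\ref{prop:strong-homology}, part (c) via Proposition~\ref{p: tiling f composite} and Corollary~\ref{cor:stronger}, and part (a) via a direct construction of a simple complex of groups over any tessellation furnished by Corollary~\ref{c:tiling-exists}, with trivial face groups, cyclic edge groups of order $\val/2$, and vertex groups isomorphic to $C_{\val/2}\times C_{\val/2}$, checked against Corollary~\ref{c:link_K}. The only cosmetic difference is that you label the edge groups per geodesic as abstract copies $H_i$ rather than a single $C_m$; the ``subtlety'' you flag at the end is in fact a non-issue, since with trivial face groups the compatibility of the inclusions forming a simple complex of groups is automatic.
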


\begin{proof}
For part (a), let $\p \geq 5$, and let $\f$ be any positive integer. Let $Y$ be any tessellation of an orientable surface $S$ by $\f$ regular right-angled hyperbolic $\p$--gons. This tessellation has to exist by Corollary \ref{c:tiling-exists}.

Let $\val = 2m$ be an even number.  We will construct a complex of groups $G(Y)$ as follows.  Each face group is trivial, hence $G(Y)$ is faithful.  Each edge group is $C_m$, the cyclic group of order $m$, and each vertex group is the direct product of two copies of $C_m$.  Let $\s$ be a vertex of $Y$ and let $h_i$ and $h_j$ be the two geodesics of the tiling which intersect at $\s$.  (These may turn out to be the same geodesic, but we keep the indices $i$ and $j$ distinct for notational purposes.)
The edge groups $C_m$ on an edge of $h_i$ include to the first $C_m$--factor in the vertex group $G_\s = C_m \times C_m$, and the edge groups $C_m$ on an edge of $h_j$ include to the second $C_m$--factor in $G_\s$.  Since all monomorphisms are inclusions, the complex of groups $G(Y)$ is simple.  By Corollary~\ref{c:link_K}, the local development $\St(\tilde\s)$ has link $\K$. Thus, by Corollary \ref{c:lattice}, the fundamental group of $G(Y)$ is a uniform lattice $\G = \Gg$ such that  $Y \cong \G \backslash \I$.

For part (b), suppose that $\f$ is divisible by $4$. Then by Proposition \ref{p: tiling f divisible by 4}, there is a tessellation $Y$ whose geodesics $h_1, \ldots, h_n$ satisfy $\sum [h_i] = 0 \in H_1(S_g)$. Thus by Proposition \ref{prop:strong-homology}, for every $\val \geq 2$ there is a uniform lattice $\G = \Gg$ such that  $Y \cong \G \backslash \I$.

For part (c), suppose that $\f$ is composite. Then, by Proposition \ref{p: tiling f composite}, there is a tessellation $Y$ whose geodesics $h_1, \ldots, h_n$ satisfy $\sum c_i [h_i] = 0 \in H_1(S_g)$ for coefficients $c_i \in \{1, 2\}$. Thus for every positive even integer $b$ and every $\val$ divisible by $(b+1)(b^2 +1)$, Corollary \ref{cor:stronger} implies that there is  a uniform lattice $\G = \Gg$ such that  $Y \cong \G \backslash \I$.
\end{proof}

\begin{corollary}\label{c:surface-subgroup}
Let $\p \geq 5$, $\val \geq 2$,  and let $\I$ be Bourdon's building.  Then for $g=p-3$, there is a 
uniform lattice $\Gg < \Aut( \I)$ such that $\pi_1(S_g) < \Gg$.
\end{corollary}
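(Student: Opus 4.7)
The plan is to reduce the statement to Theorem \ref{thm:existence}(b) and to Proposition \ref{prop:strong-homology}, then read off the surface subgroup conclusion for free. First I would substitute $g = \p - 3$ into the formula for $\f$: one computes
\[
\f \: = \: \frac{8(g-1)}{\p-4} \: = \: \frac{8(\p-4)}{\p-4} \: = \: 8.
\]
Hence $\f = 8$ is a positive integer, and, crucially, $\f$ is divisible by $4$.

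Next, since $\f$ is divisible by $4$, Proposition \ref{p: tiling f divisible by 4} supplies a tessellation $Y$ of $S_g$ by $8$ regular right-angled hyperbolic $\pgons$ whose closed geodesics $h_1,\ldots,h_n$ satisfy $\sum [h_i] = 0$ in $H_1(S_g)$. This puts us into hypothesis \eqref{i:choice sign} of Proposition \ref{prop:strong-homology} (with all signs $c_i = +1$). Invoking that proposition, condition \eqref{i:lattice for odd v} gives, for every integer $\val \geq 2$, a uniform lattice $\G = \Gg < \Aut(\I)$ with $Y \cong \G \bs \I$, together with the asserted containment $\pi_1(Y) < \G$.

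Since $\pi_1(Y) = \pi_1(S_g)$, this is exactly the desired conclusion. There is no real obstacle here: the only thing to notice is the arithmetic identity $\f = 8$ in the case $g = \p - 3$, which places us squarely in the ``divisible by $4$'' regime where the strongest homological hypothesis of Proposition \ref{prop:strong-homology} is available. Alternatively, one could rerun the proof of Theorem \ref{thm:existence}(b): the lattice $\Gg$ is constructed there as the fundamental group of a \emph{simple} complex of finite groups $G(Y)$, and Proposition \ref{p:surface_subgroups} then embeds $\pi_1(Y) = \pi_1(S_g)$ into $\pi_1(G(Y)) = \Gg$, recovering the same surface subgroup.
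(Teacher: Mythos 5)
Your proof is correct and follows essentially the same route as the paper: compute $\f = 8$, apply Proposition \ref{p: tiling f divisible by 4} to get a tessellation with $\sum [h_i] = 0$, then invoke Proposition \ref{prop:strong-homology} to obtain the lattice with $\pi_1(S_g)$ as a subgroup. The alternative argument you sketch via Proposition \ref{p:surface_subgroups} is also sound but unnecessary, since the containment $\pi_1(Y) < \G$ is already built into Proposition \ref{prop:strong-homology}\eqref{i:lattice for odd v}.
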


\begin{proof}
Let $\f = \frac{8(g-1)}{p - 4} = 8.$ Since $\f$ is divisible by $4$, Proposition \ref{p: tiling f divisible by 4} implies that there there is a tessellation $Y$ by $\f$ tiles, whose geodesics $h_1, \ldots, h_n$ satisfy $\sum [h_i] = 0 \in H_1(S_g)$. 
Thus by Proposition \ref{prop:strong-homology}, there is a uniform lattice $\G = \Gg < \Aut( \I)$, such that $Y \cong \G \backslash \I$, and such that $\pi_1(S_g) < \G$.
\end{proof}

Recall the theorem of Haglund   that for all $\p \geq 6$, all uniform lattices in $\Aut(\I)$ are commensurable up to conjugacy  \cite[Theorem 1.1]{H06}.  Haglund's theorem, combined with Corollary \ref{c:surface-subgroup}, immediately implies Corollary \ref{cor:gromov-special-case} of the introduction.


\subsection{Non-existence of lattices}\label{ss:nonexistence proof}

The following result immediately implies part \eqref{i:nonexistence f odd} of the Main Theorem.  
\begin{theorem}\label{thm:non-existence}
Let $\p \geq 5$, $\val \geq 2$, and $g \geq 2$ be integers, and let $\I$ be Bourdon's building. Assume that $\f = \frac{8(g-1)}{p - 4}$ is a positive integer.
\begin{enumerate}
\item[(a)] If $\f$ is odd and $\val = q^n$, where $q$ is an odd prime, then a lattice $\Gg$ does not exist.
\item[(b)] If $\f$ is odd and $\val = 3 q^n$, where $q \equiv 3 \pmod 8$ is an odd prime, then a lattice $\Gg$ does not exist.
\end{enumerate}
\end{theorem}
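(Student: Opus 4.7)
The plan is to reduce both parts to the non-existence results for the unimodularity equation proved in Section~\ref{s:indexings}. Suppose for contradiction that a lattice $\Gg$ exists. By Corollary~\ref{c:lattice}, there is a faithful complex of finite groups $G(Y)$ over the tessellation $Y = \Gg \bs \I$, whose induced indexing $\Ind_{G(Y)}$ is $\val$--thick (Corollary~\ref{c:v_thickness}), has parallel transport (Lemma~\ref{l:parallel}), and is unimodular. If one can exhibit an oriented closed circuit $\ell$ of odd length $k$ in the dual $1$--skeleton of $Y$, then Lemma~\ref{l:unimodularity} produces integers $a_1,\ldots,a_k \in \{1,\ldots,\val-1\}$ satisfying $\prod_{j=1}^k a_j = \prod_{j=1}^k(\val-a_j)$. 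For part (a), with $\val=q^n$, this contradicts Lemma~\ref{l:unimodularity_power_of_prime}; for part (b), with $\val=3q^n$ and $q\equiv 3\pmod 8$, this contradicts Corollary~\ref{c:inf many v not power prime}.

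Thus the substantive step is to produce an odd-length circuit in the dual $1$--skeleton under the hypothesis that $\f$ is odd. An oriented cycle of length $k$ in the ordinary dual graph of the tiling $Y$ lifts to an oriented closed circuit in the dual $1$--skeleton of $Y'$ that passes alternately through $k$ face barycenters and $k$ edge midpoints, and this is exactly a length-$k$ circuit in the sense of Lemma~\ref{l:unimodularity}: since each edge of $Y'$ is oriented from face barycenter down to edge midpoint, the $k$ face-to-edge steps are traversed forwards and the $k$ edge-to-face steps backwards. Hence it suffices to show that the dual graph of $Y$ is not bipartite whenever $\f$ is odd.

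Assume for contradiction that the dual graph is bipartite, with the $\f$ faces of $Y$ partitioned into sets $A$ and $B$ so that every edge of $Y$ separates a face in $A$ from one in $B$. In particular, no edge of $Y$ is bounded by the same face on both sides, since such an edge would give a loop in the dual graph, which is itself an odd cycle of length one. Counting face-edge incidences by colour, each face in $A$ has $\p$ boundary edges and each edge of $Y$ contributes exactly one $A$-side, so $\p|A| = |E(Y)| = \p|B|$. Hence $|A|=|B|$, and $\f = |A|+|B|$ is even, contradicting the hypothesis. (If a loop does occur in the dual graph, the desired odd cycle is already in hand.)

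The expected main point requiring care is verifying that the notion of \emph{length} in Lemma~\ref{l:unimodularity}, namely the number of forward $Y'$-edges in the circuit, really matches the edge-length of the underlying cycle in the dual graph to $Y$, so that an odd dual-graph cycle yields an odd $k$ in the unimodularity equation. This is a routine unwinding of the definitions in Section~\ref{s:indexings}; once it is in place, the number-theoretic results of that section dispose of both parts of the theorem at once.
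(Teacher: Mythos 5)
Your proposal is correct and matches the paper's proof essentially step for step: derive a complex of finite groups from the hypothetical lattice, show the dual graph of $Y$ must contain an odd circuit because bipartiteness would force $\f$ even (counting dual edges/face-edge incidences), feed the odd circuit into Lemma~\ref{l:unimodularity} to obtain the unimodularity equation, and invoke Lemma~\ref{l:unimodularity_power_of_prime} (part (a)) and Corollary~\ref{c:inf many v not power prime} (part (b)) to conclude. Your aside about loops in the dual graph and the careful matching of circuit length between the dual graph and $Y'$ are fine clarifications but do not change the route.
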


\begin{proof}
Suppose, for a contradiction, that a lattice $\G = \Gg$ does exist. Then $Y \cong \G \backslash \I$ is a tiling of a hyperbolic surface by $\f$ regular right-angled $\p$--gons, where $\f$ is odd. By Corollary \ref{c:lattice}, there is a complex of finite groups $G(Y)$ such that the universal cover of $G(Y)$ is $\I$.

We claim that the dual graph to the tiling $Y$ contains a circuit of odd length. For, suppose not: suppose that all circuits in the dual graph to $Y$ are of even length.  The dual graph is then bipartite, with the two sets of vertices in this bipartition colored (say) black and white.  If there are $b$ black vertices and $w$ white vertices, then since every vertex of the dual graph has valence $\p$, and every edge in the dual graph connects a black vertex to a white vertex, the number of edges in the dual graph is $b\p = w\p$.  Hence $b = w$.  But $b + w = \f$ and by hypothesis $\f$ is odd, a contradiction.  Thus the dual graph must contain a circuit of length $k$, for some odd number $k$.

By Lemma \ref{l:unimodularity}, this circuit of odd length induces a unimodularity equation  \eqref{e:unimodularity}.
But if $\val = q^n$, where $q$ is an odd prime, then Lemma \ref{l:unimodularity_power_of_prime} implies there is no solution to the unimodularity equation. Similarly, if $\val = 3 q^n$, where $q \equiv 3 \pmod 8$ is an odd prime, then Corollary \ref{c:inf many v not power prime} implies there is no solution to the unimodularity equation. In either case, the non-existence of a solution contradicts the existence of $\Gg$.
\end{proof}

\section{Relationships with previous examples and surface subgroups}\label{s:relationships}

We conclude with a discussion of how the lattices $\Gg$ that we have constructed relate to previous examples, in Section \ref{ss:prev examples}, and to surface subgroups, in Section \ref{ss:surface subgroups}.   

Our discussion uses covering theory for complexes of groups.  Since this theory is highly technical and is not needed elsewhere, we refer the reader to \cite[Chapters III.$\mathcal{C}$ and III. $\mathcal{G}$]{BH99} and  \cite{LT07} for the definitions and properties on which we now rely. 
Fix $\p \geq 5$ and $\val \geq 2$, and let $\G_1$ and $\G_2$ be any subgroups of $\Aut(\I)$ (not necessarily lattices).  Recall that by the discussion in Section \ref{ss:complexes_of_groups} above,  for $i = 1,2$ the group $\G_i$ is the fundamental group of a faithful, developable complex of groups $G(Z_i)$, where $G(Z_i)$ has universal cover $\I$, and $Z_i$ is the polygonal complex $Z_i = \G_i \bs \I$.   

\begin{facts}\label{facts:covering theory}  We will need the following facts from covering theory.
\begin{enumerate}
\item The group $\G_1$ is a subgroup of $\G_2$ if and only if there is a covering of complexes of groups \[ \Phi: G(Z_1) \to G(Z_2). \]
\item A covering $\Phi:G(Z_1) \to G(Z_2)$ is defined over a cellular map $f:Z_1 \to Z_2$. 
\item Given any cellular map $f:Z_1 \to Z_2$, and the first barycentric subdivision $Z_1'$ of $Z_1$, a necessary condition for the existence of a covering $\Phi:G(Z_1) \to G(Z_2)$ over $f$ is that for each vertex $\sigma$ in $Z_1'$, there is a monomorphism of local groups $G_{\sigma} \to G_{f(\sigma)}$.  In particular, if all face groups of $G(Z_2)$ are trivial, then there can only be a covering $G(Z_1) \to G(Z_2)$ if all face groups of $G(Z_1)$ are also trivial.  
\item The data for a covering $\Phi:G(Z_1) \to G(Z_2)$ over $f:Z_1 \to Z_2$ also includes a collection of group elements $\phi(a) \in G_{t(f(a))}$, for each edge $a$ of $Z_1'$, which satisfies the criteria given in \cite[Chapter III.$\mathcal{C}$, Definitions 2.4 and 5.1; see also Lemma 5.2]{BH99}.
\item If $\G_1 < \G_2$, then the index of $\G_1$ in $\G_2$ is equal to the number of sheets of any covering $\Phi:G(Z_1) \to G(Z_2)$.   If a covering $\Phi:G(Z_1) \to G(Z_2)$ is over a cellular map $f:Z_1 \to Z_2$, then the number of sheets of this covering is equal to the cardinality of $f^{-1}(\tau)$, where $\tau$ is the barycenter of any face of $Z_2$ which has trivial local group in $G(Z_2)$.  \end{enumerate}
\end{facts}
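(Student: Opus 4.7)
The overall plan is to derive each of the five items directly from the covering theory for complexes of groups developed in Bridson--Haefliger \cite{BH99} (Chapters III.$\mathcal{C}$ and III.$\mathcal{G}$) and extended in Lim--Thomas \cite{LT07}. Throughout, I would identify $\widetilde{G(Z_i)}$ with $\I$ and $\pi_1(G(Z_i))$ with $\G_i$, so that statements about subgroup containment become statements about coverings of the quotient complexes of groups.

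For items (1) and (2), the plan is to appeal to the dictionary between complexes of groups and group actions from Section \ref{ss:complexes_of_groups}. Given $\G_1 < \G_2$, the inclusion induces a $\G_1$--equivariant identity map on $\I$ which descends to a cellular map $f : Z_1 \to Z_2$; the local fiber data at each vertex in $Z_1'$ then defines a covering $\Phi : G(Z_1) \to G(Z_2)$ over $f$, by the functoriality of the complex-of-groups construction associated to a group action without inversions. Conversely, given a covering $\Phi$, the induced map on fundamental groups is the desired injection $\G_1 = \pi_1(G(Z_1)) \hookrightarrow \pi_1(G(Z_2)) = \G_2$.

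For item (3), the monomorphism on local groups is part of the definition of a covering (BH99 III.$\mathcal{C}$ Definitions 2.4 and 5.1), so the necessary condition is immediate from unpacking the definition; the special case about face groups is then a corollary, since a subgroup of the trivial group is trivial. Similarly, item (4) reduces to a pointer to the definition, since the group elements $\phi(a) \in G_{t(f(a))}$ are literally part of the data of a covering of complexes of groups, with coherence conditions spelled out in the cited definitions.

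The main obstacle is item (5), where the sheet-counting formula requires genuine argument rather than invocation of a definition. The plan is as follows. Let $\tau$ be the barycenter of a face of $Z_2$ whose local group in $G(Z_2)$ is trivial, and let $\tilde{\tau}$ be any lift of $\tau$ in $\I$. Triviality of the local group means the $\G_2$--stabilizer of $\tilde{\tau}$ is trivial, so its $\G_2$--orbit in $\I$ is a free $\G_2$--set. Descending to $Z_1 = \G_1 \bs \I$, the preimage $f^{-1}(\tau)$ consists of the $\G_1$--orbits inside this $\G_2$--orbit, and by the orbit-counting formula applied to $\G_1$ acting freely on a $\G_2$--orbit, the cardinality equals $[\G_2 : \G_1]$. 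Matching this to the number of sheets of $\Phi$ requires one further step: unwinding the definition of a sheet for complexes-of-groups coverings and noting that sheets correspond bijectively to preimages of $\tau$ precisely when the local group at $\tau$ is trivial. Without that hypothesis, one would need a weighted sheet count involving stabilizer orders, so I expect this weighted-versus-unweighted distinction to be the subtlest point to justify carefully.
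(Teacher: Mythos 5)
The paper does not actually prove these Facts; it explicitly defers to \cite[Chapters III.$\mathcal{C}$, III.$\mathcal{G}$]{BH99} and \cite{LT07}, treating the entire statement as a bundle of standard results from covering theory for complexes of groups. Your proposal, by contrast, sketches genuine arguments, so the real question is whether they are sound. They are, and they correctly reconstruct what the cited references contain. Items (3) and (4) are indeed pure unpacking of Definitions III.$\mathcal{C}$.2.4 and 5.1. For items (1)--(2), your dictionary between subgroup inclusions $\G_1 < \G_2$ and coverings over the induced map on quotients is the content of BH99 III.$\mathcal{C}$.5; the only thing to flag is that the fundamental group of a complex of groups is canonically identified with a subgroup of $\Aut(\I)$ only up to conjugacy (via a choice of basepoint and lift), so ``$\G_1$ is a subgroup of $\G_2$'' in item (1) should be read up to conjugation in $\Aut(\I)$ --- a standard caveat the paper silently assumes. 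Your treatment of item (5) is the most substantive and you correctly identified the subtle point: in BH99 the sheet count is a weighted sum $\sum_{\sigma' \in f^{-1}(\sigma)}[G_\sigma : \Phi(G_{\sigma'})]$, which collapses to the unweighted cardinality $|f^{-1}(\tau)|$ exactly when $G_\tau$ is trivial, and your orbit-counting argument (free $\G_2$-orbit of $\tilde\tau$ partitions into $[\G_2:\G_1]$ many $\G_1$-orbits) correctly computes that cardinality. In short: the paper offers a citation, you offer the proof, and the proof is right.
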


\subsection{Relationships with previous examples}\label{ss:prev examples}

We now discuss some relationships between $\Gg$ and earlier examples of lattices in $\Aut(\I)$.  For this, we first recall an example of a uniform lattice $\G_0 < \Aut(\I)$ from Section 2.1 of Bourdon \cite{B97}.  (It is explained in \cite{B97} that this example was also known to others.) 
 
\begin{our_example}  Let $G(Y_0)$ be the simple complex of groups defined as follows.  The underlying complex $Y_0$ is a regular right-angled hyperbolic $\p$--gon, with no gluings.  The face group is trivial, each edge group is $C_\val$, the cyclic group of order $\val$, and each vertex group is the direct product of the adjacent edge groups.  All monomorphisms of local groups are the natural inclusions.  Denote by $\G_0$ the fundamental group of $G(Y_0)$.  The group $\G_0$ is a graph product of $\p$ copies of $C_\val$, and is a uniform lattice in $\Aut(\I)$ acting with quotient the $p$--gon $Y_0$.  
\end{our_example}
 
Recall that in \cite[Theorem 1.1]{H06}, Haglund proved that for $\p \geq 6$ all uniform lattices in $\Aut(\I)$ are commensurable, up to conjugacy.  In fact, Haglund established this result by showing that for $\p \geq 6$, every uniform lattice in $\Aut(\I)$ is commensurable to $\G_0$, up to conjugacy.  Given this proof and the above straightforward description of $\G_0$, focusing on the relationship between $\Gg$ and $\G_0$ will shed light on the relationship between our lattices $\Gg$ and previous examples.

Now by our construction, each lattice $\Gg$ is the fundamental group of a faithful complex of finite groups $G(Y)$, where $Y$ is a tessellation of a compact genus $g$ surface $S_g$ by regular right-angled hyperbolic $\pgons$.   If we label the edges of the $\pgon$ $Y_0$ cyclically by $1,2,\ldots,\p$, then given any cellular map $f:Y \to Y_0$, each edge of the tessellation $Y$ has a label, or \emph{type}, induced by pulling back these labels using $f$.  We will say that a tessellation $Y$ of $S_g$ is \emph{type-consistent} if the edges of every $\pgon$ in $Y$ can be labelled cyclically by $1,2,...,\p$, so that the labellings of edges of adjacent $\pgons$ are compatible.  Using Facts \ref{facts:covering theory}(1) and (2) above, it follows that $\Gg = \pi_1(G(Y))$ can be a subgroup of $\G_0$ only if the tessellation $Y$ is type-consistent.  

Since the tessellations $Y$ used in our constructions of $\Gg$ in Section \ref{ss:existence proof} above were not required to be type-consistent, not all of the resulting lattices $\Gg$ embed in $\G_0$.  
However, when $\val$ is even, we have the following explicit constructions of a common finite-index subgroup of  $\Gg$ and $\G_0$. These constructions work even for $\p = 5$, when Haglund's commensurability theorem does not apply.  The case  $\val$ odd is discussed after the proof.

\begin{prop}\label{p:rel with Gamma_0} Let $\p \geq 5$ and assume that $\val \geq 2$ is even.  Let $g \geq 2$ be such that $\f= \frac{8(g-1)}{p-4}$ is a positive integer.
\begin{enumerate}
\item\label{i:g} If $\f \equiv 0 \pmod 4$, there exists a lattice $\Gg$ which is an index $\f$ subgroup of $\G_0$.
\item\label{i:g'} If $\f \equiv 2 \pmod 4$, there exists a lattice $\Gg$ which has an index $2$ subgroup $\G_{\p,\val,g'}$, where $g' = 2g -1$, so that $\G_{\p,\val,g'}$ is an index $2\f$ subgroup of $\G_0$.
\item\label{i:g''} For all $\f$, there exists a lattice $\Gg$ which has an index $4$ subgroup $\G_{\p,\val,g''}$, where $g'' = 4g - 1$, so that $\G_{\p,\val,g''}$ is an index $4\f$ subgroup of $\G_0$.
\end{enumerate}
\end{prop}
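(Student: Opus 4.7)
The approach is to invoke the covering theory for complexes of groups summarized in Facts \ref{facts:covering theory}. By items (1) and (2), a lattice $\Gg = \pi_1(G(Y))$ embeds in $\G_0 = \pi_1(G(Y_0))$ precisely when there is a covering of complexes of groups $G(Y) \to G(Y_0)$ over some cellular map $f\colon Y \to Y_0$. The existence of such an $f$ requires $Y$ to be \emph{type-consistent}: its edges admit a labeling by $\{1, \ldots, \p\}$ so that the boundary of each $\pgon$ of $Y$ carries these labels in cyclic order. When this labeling exists, pulling back $G(Y_0)$ along $f$ equips $Y$ with trivial face groups, $C_\val$ edge groups, and $C_\val \times C_\val$ vertex groups, and these local groups automatically satisfy the criteria of Corollary \ref{c:link_K}. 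Hence $G(Y)$ is a faithful complex of finite groups with universal cover $\I$, and $\pi_1(G(Y))$ is a uniform lattice $\Gg < \Aut(\I)$ whose index in $\G_0$ equals the degree of $f$, namely the number of $\pgons$ in $Y$, by Facts \ref{facts:covering theory}(5).

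For part \eqref{i:g}, assuming $\f \equiv 0 \pmod 4$, I would produce a type-consistent tessellation of $S_g$ by $\f$ $\pgons$ by augmenting the constructions of Propositions \ref{p: tiling f divisible by 4} and \ref{p: tiling f composite}. The additional task is to track a cyclic $\{1, \ldots, \p\}$--labeling of edges through every gluing step and to verify compatibility around each closed-up polygon. The divisibility hypothesis $\f \equiv 0 \pmod 4$ supplies the required parity room (for example in the cyclic arrangement of rows and columns of jigsaw pieces, or in the doubling along black edges) to close the labels up consistently.

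For parts \eqref{i:g'} and \eqref{i:g''}, when $\f$ is not divisible by $4$, I would pass to a finite cover of $S_g$ and apply part \eqref{i:g} upstairs. Concretely, I would construct a degree-$2$ (respectively degree-$4$) cover $\widetilde S \to S_g$ together with a lifted tessellation of $\widetilde S$ by $2\f$ (respectively $4\f$) $\pgons$ that is type-consistent; since the number of tiles upstairs is divisible by $4$, part \eqref{i:g} yields a lattice $\G_{\p,\val,g'}$ (respectively $\G_{\p,\val,g''}$) embedded in $\G_0$ with index $2\f$ (respectively $4\f$). The covering $\widetilde S \to S_g$ then lifts, via Facts \ref{facts:covering theory}(1)--(2), to a covering of complexes of groups, exhibiting $\G_{\p,\val,g'}$ (respectively $\G_{\p,\val,g''}$) as a subgroup of $\Gg$ with the asserted index.

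The principal obstacle is the explicit combinatorial construction of type-consistent tessellations and of their double and quadruple covers. Threading the cyclic $\{1,\ldots,\p\}$--labels through the ``jigsaw puzzle'' cuts and glues of Proposition \ref{p: tiling f composite} and the doubling operations of Proposition \ref{p: tiling f divisible by 4} is delicate: one must check at the closing step that the labels assigned along the perimeter of the final polygon cyclically match the labelings previously committed on its neighbors, and this is exactly where the parity hypotheses on $\f$ and the passage to a cover must be invoked.
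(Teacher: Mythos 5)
Your overall strategy --- proving type-consistency of the tessellation and then exhibiting $\Gg < \G_0$ via a covering of complexes of groups over the cellular map $f \colon Y \to Y_0$ --- is the same as the paper's, and the reduction of parts \eqref{i:g'} and \eqref{i:g''} to a double or quadruple cover of the tessellation is also in the same spirit. But there is a genuine error in your description of the local groups on $Y$, and it hides exactly the place where the hypothesis that $\val$ is even must be used.

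You say that pulling back $G(Y_0)$ along $f$ equips $Y$ with trivial face groups, $C_\val$ edge groups, and $C_\val \times C_\val$ vertex groups, ``and these local groups automatically satisfy the criteria of Corollary~\ref{c:link_K}.'' They do not. In $Y_0$ each vertex is a boundary vertex adjacent to exactly two edges and one face, so with $V = C_\val \times C_\val$ and $E_1, E_2 = C_\val$ the link of the local development is bipartite on $V/E_1 \sqcup V/E_2$, giving $\K$. But every vertex of $Y$ is an \emph{interior} vertex adjacent to four edges and four faces; with the same groups $V = C_\val \times C_\val$, $E_j = C_\val$, $F_k = \{1\}$, Corollary~\ref{c:link_K} yields $|V:E_1| + |V:E_3| = \val + \val = 2\val \neq \val$, so the link would be $K_{2\val,2\val}$ and the universal cover of $G(Y)$ would be $I_{\p,2\val}$, not $\I$. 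In a covering of complexes of groups the local homomorphisms $G_\sigma \to G_{f(\sigma)}$ need only be injective, not isomorphisms, and here they must in fact be proper: writing $\val = 2m$, the correct local groups on $Y$ are trivial face groups, $C_m$ edge groups, and $C_m \times C_m$ vertex groups (those of Theorem~\ref{thm:existence}(a)), and the edge homomorphism is the index-$2$ ``doubling'' inclusion $C_m \hookrightarrow C_{2m} = C_\val$. This is exactly why the proposition assumes $\val$ even; for odd $\val$ there is no covering $G(Y) \to G(Y_0)$ at all, since $\val$-thickness forces $G(Y)$ to have nontrivial face groups while $G(Y_0)$ has a trivial one, contradicting Facts~\ref{facts:covering theory}(3). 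You should also note that once the corrected local homomorphisms are in place, verifying that they extend to a bona fide covering still requires exhibiting the group elements $\phi(a)$ and checking the compatibility conditions of Facts~\ref{facts:covering theory}(4); this is a substantive step, not automatic.
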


\begin{proof}  For \eqref{i:g}, let $Y$ be the tessellation constructed in Proposition \ref{p: tiling f divisible by 4}, and let $Y_0$ be a single $\pgon$. Then a cellular map $f: Y \to Y_0$ can be described as follows. If $\p$ is odd, then $f$ is the quotient map by the group of color-preserving symmetries of the tiling depicted in Figure \ref{fig:sphere-matching}. The color-preserving symmetry group acts transitively on the $\pgons$. At the same time, a single odd-sided $\pgon$ in Figure \ref{fig:sphere-matching}(A) has no color-preserving symmetries. Thus the quotient of $Y$ is indeed a single $\pgon$ $Y_0$.

Similarly, if $\p$ is even, $f$ is the quotient map by a certain subgroup of color-preserving symmetries in the tiling of Proposition \ref{p: tiling f divisible by 4}. The symmetries of interest are generated by the front-to-back reflection and the left-to-right reflection visible in Figure  \ref{fig:sphere-matching}(C). The quotient is once again a single $\pgon$ $Y_0$.

Note that for both $\p$ odd and $\p$ even, a cyclic labeling $1, \ldots, \p$ of the edges of $Y_0$ pulls back to a type-consistent labeling in $Y$. In fact, every geodesic in the tessellation of $Y$ will have a well-defined type.

Now let $G(Y)$ be the complex of groups over $Y$ constructed as in the proof of Theorem \ref{thm:existence}(a).  That is, each face group is trivial, each edge group is $C_m$ where $\val = 2m$, and each vertex group is $C_m \times C_m$.  The fundamental group of $G(Y)$ is a lattice $\Gg$.  

To embed $\Gg$ in $\G_0$, we construct a covering of complexes of groups $\Phi:G(Y) \to G(Y_0)$.  The cellular map $f:Y \to Y_0$ is described above.  The monomorphisms of local groups $G_\sigma \to G_{f(\sigma)}$ are the identity map of trivial groups if $\s$ is the barycenter of a face of $Y$, the ``doubling'' 
monomorphism $C_m \to C_{2m} = C_v$ if $\s$ is the barycenter of an edge of $Y$, and the monomorphism $C_m \times C_m \to C_{v} \times C_{v}$ induced by the monomorphisms of edge group factors if $\s$ is a vertex of $Y$.  A collection of group elements $\phi(a) \in G_{t(f(a))}$ which satisfies the criteria referred to in Fact \ref{facts:covering theory}(3) above may be constructed as follows.  Choose an orientation for each geodesic $h$ in $Y$.  For each such $h$, let $i_h$ be the type of $h$, and fix a generator $x_{h}$ for the copy of $C_v$ on the edge of type $i_h$ in the $p$--gon $Y_0$.  Now choose two edges $a_h$ and $b_h$ in $Y'$, with $a_h$ coming into $h$ from the left and $b_h$ coming into $h$ from the right, such that the initial vertices $i(a_h)$ and $i(b_h)$ are the barycenters of faces of $Y$ and the terminal vertices $t(a_h)$ and $t(b_h)$ are the barycenters of edges of $Y$.  Then for each edge $a$ which is parallel to $a_h$, put $\phi(a) = x_{h}$, and for each edge $b$ which is parallel to $b_h$, put $\phi(b) = 1$.  The remaining edges in $Y'$ are all of the form $c = ab$ for some edges $a$ and $b$ of $Y'$; for these, put $\phi(c) = \phi(a)\phi(b)$.  

Since the tessellation $Y$ has $\f$ faces, Fact \ref{facts:covering theory}(4) above implies that the covering $\Phi$ has $\f$ sheets, and thus $\Gg$ is an index $\f$ subgroup of $\G_0$.

\smallskip
For \eqref{i:g'}, let $Y$ be the tessellation of $S_g$ constructed in Proposition \ref{p: tiling f composite}. 
Then let $Y_2$ be the double cover of $Y$ obtained by replacing the $x$ by $y$ block in Figure \ref{fig:torus-matching} above with a $x$ by $2y$ block.  (Recall that in this case, $x$ is even.)  Consider the symmetry group of $Y_2$ generated by reflecting across all the \emph{even-numbered} vertical black geodesics and all the  \emph{even-numbered} horizontal yellow geodesics. Then, as in \eqref{i:g}, the quotient is a single $\pgon$ $Y_0$, and pulling back the edge labels on $Y_0$ produces a type-consistent  labeling in $Y_2$.

  Let $\Gg$ and $\G_{\p,\val,g'}$ respectively be the lattices which are the fundamental groups of the complexes of groups $G(Y)$ and $G(Y_2)$ as in Theorem \ref{thm:existence}(a) above.  It is straightforward to construct a $2$--sheeted covering $G(Y_2) \to G(Y)$, so that $\G_{\p,\val,g'}$ is an index $2$ subgroup of $\Gg$.  A  covering $\Phi':G(Y_2) \to G(Y_0)$ may then be constructed similar to the covering $\Phi$ in (1).  Since the tessellation $Y_2$ has $2\f$ faces, it follows that $\G_{\p,\val,g'}$ is an index $2F$ subgroup of $\G_0$.

\smallskip
  The proof of \eqref{i:g''} is similar to \eqref{i:g'}. This time, we start by constructing a four-fold cover $Y_4$ of $Y$, by replacing the $x$ by $y$ block in Figure \ref{fig:torus-matching} above by a $2x$ by $2y$ block.  (Recall that in this case, $x$ and $y$ are any positive integers such that $F = xy$.) Then proceed exactly as in \eqref{i:g'}.
\end{proof}

Now suppose that $\val$ is odd.  Then by $\val$--thickness, if a lattice $\Gg = \pi_1(G(Y))$ exists,  the face groups of the complex of groups $G(Y)$ cannot all be trivial.  Hence by Fact \ref{facts:covering theory}(3) above, when $\val$ is odd there are \emph{no} values of $\p$ and $g$ such that a lattice $\Gg$ embeds in $\G_0$.  
However, since the local groups in $G(Y)$ are still just direct products of cyclic groups, it seems possible that the nontrivial face groups could be killed using an ``unfolding" construction similar to that in \cite{ABJLMS09} or \cite{KT08}, so as to obtain an explicit finite index subgroup of $\Gg$ which embeds in $\G_0$. 

Finally, we note that the lattices constructed in Kubena--Thomas \cite{KT08} are explicitly commensurable with $\G_0$, and so in the case $\val$ even Proposition \ref{p:rel with Gamma_0} above provides an explicit commensuration of the lattices in \cite{KT08} with certain $\Gg$.  In order to describe the relationship between the lattices $\Gg$ and the examples in Thomas \cite{T06} or Vdovina \cite{V05}, it would likely also be easiest first to relate those examples directly to $\G_0$.  

\subsection{Relationship with surface subgroups}\label{ss:surface subgroups}

Since each $\Gg$ is the fundamental group of a \emph{simple} complex of groups, the (topological) fundamental group $\pi_1(S_g)$ of the quotient genus $g$ surface $S_g = \Gg \backslash \I$ embeds in $\Gg$.  We now discuss this embedding, and also draw some conclusions about how $\pi_1(S_g)$ sits inside $\Aut(\I)$.

Let $\Gg = \pi_1(G(Y))$.   We first sketch an algebraic argument that the embedding of $\pi_1(S_g)$ in $\Gg$ is of infinite index.   Consider the presentation of $\pi_1(G(Y))$ given in Section \ref{ss:complexes_of_groups} above.  The surface subgroup $\pi_1(S_g)$ is generated by $E^{\pm}(Y')$, and since $G(Y)$ is simple, the only relation involving both a nontrivial element of a local group and an element of $E^\pm(Y')$ is (4).  From this it follows that there is some nontrivial element $h$ of a local group $G_\s$, and some edge $a$ of $Y'$ with $\sigma \neq i(a)$ and $a^+$ nontrivial in $\pi_1(S_g)$, so that $h$ and $a^+$ together generate an infinite subgroup $\langle h,a^+ \rangle$ of $\Gg$.  Since $h$ is not in $\pi_1(S_g)$ and $a^+$ generates an infinite cyclic group, there will then be infinitely many distinct $\langle h, a^+ \rangle$--cosets of $\pi_1(S_g)$ in $\Gg$.  Hence $\pi_1(S_g)$ is a subgroup of infinite index.

We now apply some covering theory.  
The surface group $\pi_1(S_g)$  is itself the fundamental group of a complex of groups $G(Y_g)$ with $Y_g = \pi_1(S_g) \backslash \I$.   Hence there is a covering of complexes of groups $\Phi: G(Y_g) \to G(Y)$, with infinitely many sheets.   Recall that by construction, at least one face group in $G(Y)$ is trivial.  Thus by Fact \ref{facts:covering theory}(5) above, the polygonal complex $Y_g$ is infinite.  So in particular, $\pi_1(S_g) = \pi_1(G(Y_g))$ does not act cocompactly on $\I$.  Also, all local groups of $G(Y)$ are finite, hence by Fact \ref{facts:covering theory}(3), all local groups of $G(Y_g)$ are finite.  Thus $\pi_1(S_g)$ is a discrete subgroup of $\Aut(\I)$.  But since $\pi_1(S_g)$ is torsion-free, $G(Y_g)$ has all local groups trivial.  Thus $\pi_1(S_g)$ is a discrete subgroup of $\Aut(\I)$ which acts freely but not cocompactly on $\I$.  Hence in particular, $\pi_1(S_g)$ is not a uniform lattice in $\Aut(\I)$.  Moreover, by the characterization of lattices in Section \ref{ss:lattices} above, a nonuniform lattice must have torsion, in order for its $S$--covolume to converge.  Therefore $\pi_1(S_g)$ is not a lattice in $\Aut(\I)$.  

We finally note that since all local groups of $G(Y)$ are direct products of cyclic groups, it seems possible that all of its nontrivial local groups (not just the nontrivial face groups) could be killed using a construction similar to that in \cite{ABJLMS09} or \cite{KT08}, so as to obtain the infinite polygonal complex $Y_g = \pi_1(S_g) \backslash \I$ explicitly.

\bibliographystyle{hamsplain}
\bibliography{biblio}

\end{document}